\newtheorem{theorem}{Theorem}[section]
\newtheorem{proposition}[theorem]{Proposition}
\newtheorem{corollary}[theorem]{Corollary}
\newtheorem{lemma}[theorem]{Lemma}
\theoremstyle{definition}
 \newtheorem{definition}[theorem]{Definition}
\newtheorem*{definition*}{Definition}
\newcommand\Lab{\La^\bullet}
\newcommand\LanI{\La^1_{x,\mathrm{nor}}}
\newcommand\LanII{\La^2_{x,\mathrm{nor}}}
\newcommand\LatI{\La^1_{x,\mathrm{tan}}}
\newcommand\LatII{\La^2_{x,\mathrm{tan}}}
\newcommand\Latp{\La^p_{x,\mathrm{tan}}}
\newcommand\ext{_{\mathrm{ext}}}
\theoremstyle{remark}
\newtheorem{remark}[theorem]{Remark}
\numberwithin{equation}{section}
\newcommand{\al}{\alpha}
\newcommand{\be}{\beta}
\newcommand{\de}{\delta}
\newcommand{\ep}{\varepsilon}
\newcommand{\ga}{\gamma}
\newcommand{\la}{\lambda}
\newcommand{\om}{\omega}
\newcommand{\si}{\sigma}
\newcommand{\vp}{\varphi}
\newcommand{\De}{\Delta}
\newcommand{\La}{\Lambda}
\newcommand{\Si}{\Sigma}
\newcommand{\Om}{\Omega}
\def\RR{\mathbb{R}}
\def\ZZ{\mathbb{Z}}
\def\TT{\mathbb{T}}
\newcommand{\calT}{\mathcal T}
\newcommand{\cD}{{\mathcal D}}
\newcommand{\cH}{{\mathcal H}}
\newcommand{\cL}{{\mathcal L}}
\newcommand{\cO}{{\mathcal O}}
\newcommand{\cR}{{\mathcal R}}
\newcommand{\cT}{{\mathcal T}}
\newcommand{\pd}{\partial}
\newcommand\minus\backslash
\newcommand{\id}{{\rm id}}
\newcommand\lan\langle
\newcommand\ran\rangle
\DeclareMathOperator\Div{div}
\renewcommand\leq\leqslant
\renewcommand\geq\geqslant
\newlength{\intwidth}
\newcommand\BOm{\overline\Om}
 \DeclareMathOperator\curl{curl}
\begin{document}

\title[MHD equilibria with nonconstant pressure]{MHD
  equilibria with nonconstant pressure\\ in nondegenerate toroidal
  domains}

\author{Alberto Enciso}
\address{Instituto de Ciencias Matem\'aticas, Consejo Superior de
  Investigaciones Cient\'\i ficas, 28049 Madrid, Spain}
\email{aenciso@icmat.es}

\author{Alejandro Luque}
\address{Department of Mathematics, Uppsala University, 751 06
  Uppsala, Sweden}
\email{alejandro.luque.math@gmail.com}

\author{Daniel Peralta-Salas}
\address{Instituto de Ciencias Matem\'aticas, Consejo Superior de
  Investigaciones Cient\'\i ficas, 28049 Madrid, Spain}
\email{dperalta@icmat.es}

%
%
\begin{abstract}
  We prove the existence of piecewise smooth MHD equilibria in three-dimensional
  toroidal domains of~$\RR^3$ where the pressure is constant on the boundary but not in the interior. The pressure is piecewise
  constant and the plasma current exhibits an arbitrary number of current
  sheets. We also establish the existence of free boundary steady states surrounded by vacuum with an external surface current. The toroidal domains where these equilibria are shown to
  exist do not need to be small perturbations of an axisymmetric
  domain, and in fact they
  can have any knotted topology. The building blocks we use in our
  construction are analytic toroidal domains satisfying a certain
  nondegeneracy condition, which roughly states that there exists a
  force-free field that is ergodic on the surface of the domain. The
  proof involves three main ingredients: a gluing construction of
  piecewise smooth MHD
  equilibria, a Hamilton--Jacobi equation on the two-dimensional torus
  that can be understood as a nonlinear deformation of the
  cohomological equation (so the nondegeneracy assumption plays a
  major role in the corresponding analysis), and a new KAM theorem tailored for the study
  of divergence-free fields in three dimensions whose Poincar\'e map
  cannot be computed explicitly.
\end{abstract}
\maketitle

\section{Introduction}

The magnetohydrodynamics (MHD) equilibria in a smooth bounded domain
$\Om\subset\RR^3$ are often described by the solutions to the system
of equations
\begin{subequations}\label{fixed}
\begin{align}\label{Eq.MHD}
B\times \curl B+\nabla P&=0 \qquad\text{ in }\Om\,,\\
\label{eq.MHD2}\Div B&=0 \qquad\text{ in }\Om\,,\\
\label{eq.MHD3}B\cdot N&=0 \qquad\text{ on } \partial\Om\,.
\end{align}
Here the vector field $B(x)$ is the steady magnetic field of a
perfectly conducting plasma, the scalar function $P(x)$ is the plasma
pressure and $N(x)$ denotes the outer unit normal at a boundary
point $x\in\pd\Om$. It is also customary to assume that $P$ is
constant on the boundary $\pd\Om$, so we can set
\begin{equation}\label{Eq.MHD4}
P=0 \qquad \text{ on } \partial\Om\,.
\end{equation}
\end{subequations}

Equations~\eqref{fixed}, which we will referred to as
the {\em fixed boundary problem}\/, model the equilibrium
configurations of a plasma
confined in the fixed magnetic domain~$\Om$.
Because of this connection, the
most interesting case is when $\Om$ is a toroidal domain, that is,
when the boundary of the domain is diffeomorphic to a
torus. It is worth noting that
these equations also describe stationary
solutions to the 3D Euler equations in fluid mechanics, with $B$
playing the role of the velocity field of the fluid and $P$ being
the negative of the Bernoulli function (see e.g.~\cite{AK}).

In this paper we will be concerned with the
problem of finding toroidal domains that admit MHD equilibria whose
pressure is constant on the boundary but not in the interior. This
problem can be traced back at least to the work of
H.~Grad in the 1960s, who conjectured~\cite{Grad} that no smooth solutions fibring the domain with toroidal surfaces exist
unless the domain is axially symmetric. An important somewhat related result, due to Bruno and Laurence~\cite{T09} in the 1990s, is the
existence of weak solutions with nonconstant stepped-pressure in nonsymmetric toroidal domains that are small perturbations of an axisymmetric solid torus.
A very illuminating numerical implementation of this model suggesting the existence of stepped-pressure equilibria in toroidal domains far from axisymmetric was developed in~\cite{HHD07,HDDHMNL12}. See also~\cite{CDG} for the construction of smooth solutions with nonconstant pressure, but subject to a small force, on toroidal domains that are close to be axisymmetric. However, Grad's influential conjecture remains wide open. A comprehensive recent account of the subject can be found in~\cite{Meiss,Helander}.

It is important to remark that in the context of plasma physics, the pressure profile $P$ (usually described as a function of the magnetic flux) and the rotational transform profile (which is, essentially, the ratio of the components of the frequency vector of the magnetic field on each invariant toroidal surface) are prescribed \emph{a priori}, as inputs of the computations. The relevance of these prescriptions lays in the fact that they yield uniqueness in the solutions of Equations~\eqref{fixed}, if they exist. In contrast, in this work we will be interested in the existence of stepped-pressure MHD equilibria in domains very far from symmetric, without imposing any profile, and then the pressure and the rotational transform profiles show up \emph{a posteriori}. The same comment applies to the free boundary problem that is next stated.

Another related equation that appears in plasma physics,
particularly concerning the design of plasma confinement devices for
nuclear power generation, describes a free boundary steady state
surrounded by vacuum with an external current $J\ext$. In terms of the
interior and exterior magnetic fields, $B$ and $B\ext$, this system
reads in this case as
\begin{subequations}\label{free}
\begin{align}
B\times \curl B+\nabla P&=0 \qquad\text{ in }\Om\,, \label{Eq.FBP1}\\
\Div B&=0 \qquad\text{ in }\Om\,, \label{Eq.FBP2}\\
  \Div B\ext&=0 \qquad\text{ in }\RR^3\backslash\overline{\Om}\,, \label{Eq.FBP3}\\
\curl  B\ext&=J\ext \qquad\text{ in
              }\RR^3\backslash\overline{\Om}\,, \label{Eq.FBP4}\\
    (B-B\ext)\cdot N&=0 \qquad\text{ on } \partial\Om\,, \label{Eq.FBP5}\\
  |B|^2-|B\ext|^2&=0 \qquad\text{ on } \partial\Om\,, \label{Eq.FBP6}\\
  B\ext&\to0 \qquad\,\text{as } |x|\to\infty\,. \label{Eq.FBP7}
\end{align}
For simplicity, we have assumed that $P=0$ on $\pd\Om$. The jump condition~\eqref{Eq.FBP6} uses that the (hydrodynamic) pressure in the
vacuum is simply $\frac12|B\ext|^2$. In this work we shall assume that
the external current is a current sheet, i.e.,
\begin{equation}\label{Jext}
J\ext = J\, dS
\end{equation}
where $dS$ is the surface measure on the boundary~$\pd\Om'$ of a certain domain $\Om'$
enclosing $\overline{\Om}$ and $J:\pd\Om'\to\RR^3$ is a tangent vector field on the surface. We want to stress that in applications in the context of plasma confinement (stellarators), the external current $J\ext$ is generated by a certain number of current coils, but the techniques we use in this work do not allow us to deal with this case (see Remark~\ref{R.coils}). We will additionally impose the
tangency condition
\begin{equation}\label{Eq.FBP8}
B\cdot N =0 \qquad\text{ on } \partial\Om
\end{equation}
\end{subequations}
and refer to the system of equations~\eqref{free} as the {\em free boundary problem}\/.

We observe that a related, although different, free boundary problem for MHD equilibria has been considered in the literature, see e.g.~\cite{Temam,Beres} and references therein. In these works, the problem consists in determining the shape of an axisymmetric plasma region, which is surrounded by vacuum and contained in a given axisymmetric shell. It is assumed that the shell is a perfect conductor, so no external current is introduced in the vacuum region; moreover the vacuum field is assumed to be tangent to the boundaries of the shell and of the plasma, and the continuity of the hydrodynamic pressure is also imposed (in order to have a weak solution of the MHD equations across the plasma boundary). In contrast, in the free boundary problem that we consider above, the vacuum vessel is not a perfect conductor, so this leads to consider MHD equilibria on a fixed plasma region $\Om$ (the fixed boundary problem), and look for an external current $J\ext$ in the vacuum whose corresponding magnetic field $B\ext$ forces the plasma to be in equilibrium on the whole space (the reason for the boundary conditions~\eqref{Eq.FBP5} and~\eqref{Eq.FBP6}).

The main result of this article is the existence of piecewise smooth
MHD equilibria with nonconstant stepped-pressure in a wide range of
toroidal domains, which can be very different from an axisymmetric
domain. The same philosophy works, with only minor modifications,
for the fixed and free boundary problems. The equilibria we construct are not~$C^1$, like those of Bruno
and Laurence for almost-axisymmetric domains, and in fact they feature singular
current sheets (cf.~Remark~\ref{R:current}). The toroidal domains we consider can even be knotted in an arbitrarily
complicated fashion. Specifically, the result applies to any toroidal
domain with an analytic boundary satisfying a certain nondegeneracy
assumption, which enables us to employ KAM-theoretic techniques in a
certain step of the proof.

\subsection{Nondegenerate toroidal domains}

To define the nondegeneracy condition that our toroidal domains must
satisfy, we need to introduce some notation. We recall that by toroidal domain we mean that the domain is diffeomorphic to a solid torus. Firstly, recall that
Hodge theory ensures the existence of a unique (modulo a
multiplicative constant) solution to the boundary problem
\[
\curl h=0\quad\text{in }\Om\,, \qquad \Div h=0\quad\text{in
}\Om\,, \qquad h\cdot N=0 \quad\text{on }\pd\Om
\]
on a toroidal domain~$\Om\subset\RR^3$. We refer to~$h$ as the {\em
  harmonic field}\/ of~$\Om$.

{\em Beltrami fields}\/ are solutions to
the equation
\begin{equation}\label{eqsB}
\curl B=\lambda B\quad\text{in }\Om\,, \qquad B\cdot N=0 \quad\text{on }\pd\Om
\end{equation}
for some nonzero real constant~$\la$. The space of Beltrami fields on
a toroidal domain~$\Om$ is infinite dimensional. Specifically, as the
curl defines a self-adjoint operator with dense domain in the space $L^2_{\Div,h}(\Om)$ of
square-integrable divergence-free fields that are $L^2$-orthogonal
to the harmonic field~$h$, it is standard that there is an orthogonal
basis of $L^2_{\Div,h}(\Om)$ consisting of
Beltrami fields with zero harmonic part, i.e., solutions $\{B^n\}_{n=1}^\infty$ to
Equation~\eqref{eqsB} for certain nonzero constants
$\{\la^n\}_{n=1}^\infty$
that satisfy
the additional constraint
\[
\int_{\Om}B^n\cdot h\, dx=0\,.
\]
Note that $|\la^n|\to\infty$ as $n\to\infty$.
Moreover, for all
$\la\in \RR\backslash\{\la^n
\}_{n=1}^\infty$, there is a unique
Beltrami field with parameter~$\la$ and prescribed harmonic part, that
is, a unique solution to the boundary problem~\eqref{eqsB} subject to
the additional constraint
\[
\int_{\Om}B\cdot h\, dx=1\,.
\]
All along this paper we shall use the term \emph{eigenvalue} when referring to the constant proportionality factor $\la$ of a Beltrami field, even taking into account that, strictly speaking, $\la$ is an eigenvalue of curl only when it belongs to the aforementioned discrete set of points $\{\la^n\}_{n=1}^\infty$.

In both cases (that is, when~$B$ has a prescribed harmonic part and when $B=B^n$ for some~$n$), the trace to the boundary of the Beltrami field is a smooth vector field tangent to the
(embedded, nonsymmetric, possibly knotted) toroidal
surface~$\pd\Om$. In any case, $\pd\Om$ is an invariant torus of $B$.
Now recall that this invariant torus is {\em Diophantine}\/ with frequency vector~$\om\in\RR^2$ if there exist global coordinates $\vp: \pd\Om\to\TT^2$, with $\TT:=\RR/2\pi\ZZ$, such that the restriction of the field~$B$ to~$\partial\Om$ reads in these coordinates as
\begin{equation}\label{eq.torus}
B|_{\partial\Om}=\om_1\, \pd_{\vp_1}+\om_2\, \pd_{\vp_2}\,,
\end{equation}
and $\om$ is a
Diophantine vector. This means that there exist constants~$\ga>0$ and~$\tau>1$ such that
\begin{equation}\label{eq:diof}
|k\cdot \omega| \geq {\gamma}|k|^{-\tau}
\end{equation}
for any vector with integral components
$k\in\ZZ^2\backslash\{0\}$. It is well known that the Diophantine property (possibly with different $\gamma$) of the frequency vector $\om$ is independent of the choice of coordinates~$\vp$. We recall that $\{\pd_{\vp_1},\pd_{\vp_2}\}$ stands for the holonomic (or coordinate) basis associated to the coordinate system $\vp: \pd\Om\to\TT^2$; as usual, the notation $\pd_{\vp_j}$ means that the coordinate basis vector field can be identified with the partial derivative operator (under the usual interpretation of vector fields as differential operators acting on functions).

In this paper, we will say that a toroidal domain~$\Om\subset\RR^3$ is
{\em nondegenerate of type I or II}\/ if there is a Beltrami field
on~$\Om$ for which the boundary~$\pd\Om$ is a Diophantine invariant
torus and if the determinant of certain $2\times 2$ constant matrices
is not zero (type I) or not equal to a certain constant depending on the
Beltrami field (type II). To
streamline the Introduction, the
expression of these matrices (which are the average on~$\TT^2$ of
matrix-valued quantities involving the specific Beltrami
field, the associated Diophantine frequency vector and the linearizing
coordinates~$\vp$) is relegated to Definitions~\ref{D:torus} and~\ref{D:torusII} in the
main text. To get some intuition about the meaning of this condition, recall
that Beltrami fields appear in the
plasma physics literature as {force-free fields}\/ with constant
factor, so the nondegeneracy condition can be heuristically understood as the existence of a generic
force-free field on the domain whose rotational transform is irrational on the boundary. For concreteness, we shall refer to a Beltrami
field with this property as a {\em nondegenerate Beltrami field of type I or II}\/.

Some observations are in order. Firstly, since
there are infinitely many curl eigenfields that do not necessarily
vanish on the toroidal surface~$\pd\Om$, and since the set of
Diophantine vectors has full
measure, it is natural to conjecture that a ``generic'' toroidal
domain should be nondegenerate, in this sense. However, genericity
questions for vector-valued eigenvalue problems are remarkably
hard~\cite{Uhlenbeck,TAMS} and we have not been able to rigorously establish this
claim. A particular case that one can study in detail is the class of
thin toroidal domains, which one can understand as thickenings of an
arbitrary closed curve in space. There one has a
good understanding on the structure of harmonic fields, which can be
used to rigorously show that thin toroidal domains are indeed
nondegenerate. Details are given in Proposition~\ref{P:isot}. A
concrete consequence of this fact is that there
certainly are nondegenerate toroidal domains of any knot type, which
are obviously not small perturbations of an axisymmetric domain. The
boundary can be chosen to be smooth, and even analytic.

\subsection{Statement of the main results}

We are now ready to present our main result about the existence of
MHD equilibria in nondegenerate toroidal domains.

In the context of
the fixed boundary problem, the result can be stated as follows:

\begin{theorem}[Fixed boundary MHD equilibria]\label{T:main}
Let $\Om_1\subset\RR^3$ be a nondegenerate toroidal domain of type I with
analytic boundary, and let $B_1$ be a
nondegenerate Beltrami field of type I with eigenvalue $\la_1$ on~$\Om_1$, in the sense of
Definition~\ref{D:torus}. Then, for any $N>1$ and almost all
$(\la_2,\dots,\la_N)\in\RR^{N-1}$ with $\la_j\neq\la_k$, the following statements hold:
\begin{enumerate}
\item There exists a collection of ``nested'' nondegenerate toroidal domains of type I
  $\{\Om_k\}_{k=2}^N$ with analytic boundary, all of them diffeomorphic
  to~$\Om_1$, with the property that $\overline{\Om_{k-1}}\subset\Om_k$ for all $2\leq k\leq N$ (see Figure~\ref{fig:nested}).

\item There is a piecewise smooth MHD equilibrium $(B,P)$ in the fixed
  domain~$\Om_N$, which satisfies
  Equations~\eqref{fixed} in the weak sense. In particular, $P+\frac12|B|^2$ is continuous across the surfaces $\pd\Om_k$ for $1\leq k\leq N-1$.

\item For each $1\leq k\leq N$, the magnetic field and the pressure
  satisfy
  \[
\curl B = \la_k B \qquad\text{and}\qquad P= p_k
  \]
  in $\Om_k\backslash\overline{\Om_{k-1}}$. Here $\{p_k\}_{k=1}^{N-1}$
  are distinct nonzero constants, $p_N:=0$ and we have set
  $\Om_0:=\emptyset$. Furthermore, $B=B_1$ in~$\Om_1$.
\end{enumerate}
\end{theorem}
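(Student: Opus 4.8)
The plan is to build the nested equilibrium inductively, gluing together force-free pieces with jumps in the pressure across the interfaces $\pd\Om_k$. The essential point is that the weak formulation of~\eqref{fixed} across a current sheet $\pd\Om_k$ demands exactly two things: that $B\cdot N=0$ from both sides (so $\pd\Om_k$ is a common invariant torus for the inner and outer fields) and that the total pressure $P+\frac12|B|^2$ be continuous across $\pd\Om_k$. Thus, starting from the given $(B_1,\la_1)$ on $\Om_1$, I must produce $\Om_2\supset\overline{\Om_1}$ together with a Beltrami field $B_2$ on $\Om_2$ with eigenvalue $\la_2$ such that $\pd\Om_1$ is an invariant torus of $B_2$ and $|B_2|$ is constant on $\pd\Om_1$ (so that the jump in $\frac12|B|^2$ can be absorbed by a jump $p_1-p_2$ in the pressure); then iterate. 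The domains $\{\Om_k\}_{k=2}^N$ and the fields $\{B_k\}$ are produced one at a time, and the constants $p_k$ are read off at the end from the values of $|B_k|^2-|B_{k+1}|^2$ on $\pd\Om_k$, setting $p_N:=0$ and solving the resulting telescoping system for $p_{N-1},\dots,p_1$.

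The core of the argument is therefore the single inductive step: given a nondegenerate Beltrami field $B_k$ of type~I on $\Om_k$ with $\pd\Om_k$ a Diophantine invariant torus, and given a generic new eigenvalue $\la_{k+1}\ne\la_k$, construct a slightly larger nondegenerate toroidal domain $\Om_{k+1}$ and a Beltrami field $B_{k+1}$ with eigenvalue $\la_{k+1}$ on it such that $\pd\Om_k$ is an invariant torus of $B_{k+1}$ on which $|B_{k+1}|$ is constant. I expect this to be carried out in two stages. First, a Hamilton--Jacobi / deformation step: one looks for the new domain as a graph over $\pd\Om_k$ and for $B_{k+1}$ as the unique Beltrami field with eigenvalue $\la_{k+1}$ and prescribed harmonic part (using the solvability recalled in the excerpt), and one must solve a nonlinear PDE on $\TT^2$ — described in the abstract as a nonlinear deformation of the cohomological equation — for the graph function and for the linearizing coordinates, so that $\pd\Om_k$ becomes invariant with the correct (Diophantine) frequency vector and $|B_{k+1}|$ is constant on it. Here the type~I nondegeneracy (nonvanishing of the relevant $2\times2$ averaged matrix, from Definition~\ref{D:torus}) is precisely what makes the linearized operator invertible on the appropriate scale of analytic/Sobolev spaces, via small-divisor estimates coming from~\eqref{eq:diof}. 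Second, a KAM step: once $\pd\Om_k$ is an invariant Diophantine torus of $B_{k+1}$, one must ensure that $B_{k+1}$ on the enlarged domain $\Om_{k+1}$ still possesses a Diophantine invariant torus near its own boundary — i.e.\ that $\Om_{k+1}$ is again nondegenerate of type~I — so the induction can continue. This is where the ``new KAM theorem for divergence-free fields in 3D whose Poincaré map cannot be computed explicitly'' enters: it produces, for an open set of eigenvalues $\la_{k+1}$ (hence the ``almost all'' in the statement), a persistent Diophantine invariant torus on which the nondegeneracy determinant is again nonzero, and one takes $\pd\Om_{k+1}$ to be that torus.

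The main obstacle, I expect, is the coupling between the unknown domain and the unknown field: unlike a standard KAM scheme where the vector field is fixed and one perturbs the torus, here the field $B_{k+1}$ itself depends on the domain $\Om_{k+1}$ (through the elliptic boundary-value problem defining the Beltrami field), so the Poincaré/return map is only defined implicitly and must be controlled together with the Newton iteration for the conjugacy. Concretely, one has to set up a quasi-Newton scheme in which, at each step, (a) the Beltrami field is re-solved on the current approximate domain with quantitative analytic elliptic estimates, (b) the Hamilton--Jacobi equation for the interface and the cohomological equation for the coordinates are solved with controlled loss of analyticity width, and (c) the type~I nondegeneracy condition is verified to be stable along the iteration. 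Closing the estimates so that the loss of regularity at each stage is summable — and checking that the genericity in $(\la_2,\dots,\la_N)$ needed to keep the frequency Diophantine and the determinant nonzero is a full-measure condition — is the delicate part; the gluing, the weak-solution bookkeeping, and the determination of the pressures $p_k$ are then essentially routine. Finally, the existence of nondegenerate analytic building blocks of any knot type to which the theorem applies is supplied by Proposition~\ref{P:isot}.
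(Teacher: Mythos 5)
Your gluing framework and the division of labour between a Hamilton--Jacobi step and a KAM step match the paper, but the way you propose to produce the new field $B_{k+1}$ is structurally different and leaves the hardest point unresolved. You construct $B_{k+1}$ as the solution of an elliptic boundary value problem on the unknown domain $\Om_{k+1}$ (prescribed harmonic part, tangency to the outer boundary) and then try to force the interior surface $\pd\Om_k$ to be invariant, with $|B_{k+1}|$ constant on it, by deforming the outer boundary. This is an overdetermined free-boundary problem in which the field and the domain are coupled; you correctly identify this coupling as the main obstacle, but the coupled quasi-Newton scheme you sketch is precisely what the paper is engineered to avoid, and nothing in your proposal explains how to close it --- in particular, how the invariance of an \emph{interior} surface, which is not a natural boundary condition for the div-curl system, is to be achieved by varying the \emph{outer} boundary.

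The key idea missing from your proposal is to decouple the field from the outer domain by solving a \emph{Cauchy} problem rather than a boundary value problem: Theorem~\ref{T:CK} (a Cauchy--Kovalevskaya theorem for curl) produces the unique local Beltrami field $B_{k+1}'$ with $\curl B_{k+1}'=\la_{k+1}B_{k+1}'$ and prescribed trace $X_{k+1}$ on the \emph{known} surface $\pd\Om_k$, provided the dual $1$-form of the Cauchy datum is closed, cf.~\eqref{Eq.CKclosed}. The Hamilton--Jacobi equation (Theorem~\ref{L:cohom}) is then solved not for a graph function of the new domain but for the Cauchy datum itself: one seeks $X_{k+1}=(1+c)^{1/2}Y+\nabla H+a_1\nabla\vp_1+a_2\nabla\vp_2$ so that the closedness constraint holds by construction, $|X_{k+1}|^2$ equals a constant multiple of $|B_k|^2$ plus a constant (which, after a global rescaling, yields the hypothesis of Lemma~\ref{L.Euler}), and $X_{k+1}$ is conjugate to a Diophantine rotation; this is where the type~I matrix of Definition~\ref{D:torus} enters, as the invertibility condition for the linearized (cohomological) system in the Newton scheme. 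The outer boundary $\pd\Om_{k+1}$ is then found \emph{a posteriori} as one of the Diophantine invariant tori of $B_{k+1}'$ accumulating on $\pd\Om_k$ from outside, supplied by Corollary~\ref{Cor_KAM} once the twist constant is shown to be nonzero for all $\la_{k+1}$ off a single excluded value~\eqref{lambda} (whence the full-measure set of eigenvalues, rather than the openness-in-$\la$ argument you attribute to the KAM step). Without the Cauchy--Kovalevskaya input, your outline does not yield a proof.
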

\begin{remark}
The full measure set of values $(\la_2,\dots,\la_N)$ for which the theorem holds is described as the whole $\RR^{N-1}$ minus a finite set of hyperplanes; the values of $\lambda_j$ that must be avoided are given by Equation~\eqref{lambda}, and depend on the nondegenerate Beltrami field $B_1$ and the pressure jumps in a very nontrivial way.
\end{remark}

Likewise, for the free boundary problem, our main result can be stated
as follows. In the statement, since $B$ is a Beltrami field, it solves Equation~\eqref{Eq.FBP1} in $\Om$ for any constant $P$, whose value is inessential and can be set to be $0$. Since the plasma pressure of the vacuum is  $0$ as well, this yields free boundary force-free equilibria.

\begin{theorem}[Free boundary force free equilibria]\label{T:main2}
Let $\Om\subset\RR^3$ be a nondegenerate toroidal domain of type II with
analytic boundary, and let $B$ be a
nondegenerate Beltrami field of type II with eigenvalue $\la$ on~$\Om$, in the sense of
Definition~\ref{D:torusII}. Then there exists an external magnetic
field $B\ext$ and a current sheet of the
form $J\ext = J\, dS$, where $dS$ is the surface measure on the boundary of
an analytic domain~$\Om'$ that is diffeomorphic to~$\Om$ and encloses~$\BOm$, and
where $J$ is an analytic tangent vector field on~$\pd\Om'$, such that
$(B,B\ext,J\ext,\Om)$ is a solution of the free boundary
problem~\eqref{free} with $P=0$ in $\Om$.
\end{theorem}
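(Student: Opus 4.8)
The plan is to reduce the free boundary problem of Theorem~\ref{T:main2} to the analysis of a single vacuum field matching a prescribed Beltrami field across $\pd\Om$, and then to apply the KAM-type machinery that underlies Theorem~\ref{T:main}. First, since $B$ is a nondegenerate Beltrami field of type~II, Equation~\eqref{Eq.FBP1} holds in $\Om$ with $P=0$, so the content of the theorem is entirely the construction of the exterior data $(B\ext,J\ext,\Om')$ satisfying \eqref{Eq.FBP3}--\eqref{Eq.FBP7} together with the jump conditions \eqref{Eq.FBP5}--\eqref{Eq.FBP6} on $\pd\Om$. The idea is to look for $B\ext$ in the shell $\Om'\backslash\BOm$ as a harmonic (curl-free, divergence-free) field, and in $\RR^3\backslash\overline{\Om'}$ as the field generated by the surface current $J\ext=J\,dS$ on $\pd\Om'$; the current $J$ on $\pd\Om'$ is then not a free datum but is \emph{determined} by the jump of the tangential components of $B\ext$ across $\pd\Om'$ via Ampère's law, and its divergence-freeness on $\pd\Om'$ is automatic because $B\ext$ is divergence-free on both sides. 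So the only genuine equations to solve are on $\pd\Om$: we need a harmonic field $B\ext$ on an exterior collar of $\pd\Om$ with $B\ext\cdot N = B\cdot N = 0$ on $\pd\Om$ (this is \eqref{Eq.FBP5} combined with \eqref{Eq.FBP8}) and $|B\ext|=|B|$ on $\pd\Om$ (this is \eqref{Eq.FBP6}).

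The key step is therefore to realize $B|_{\pd\Om}$ as the boundary trace of a harmonic field on the \emph{exterior} side. The natural approach mirrors the gluing construction used for the fixed boundary theorem: one reflects, or rather continues, the geometry across $\pd\Om$. Concretely, I would use the fact that $\pd\Om$ is an analytic Diophantine invariant torus of $B$ to introduce analytic ``straightening'' coordinates near $\pd\Om$ in which $\pd\Om = \{s=0\}$ and $B|_{\pd\Om} = \om_1\pd_{\vp_1}+\om_2\pd_{\vp_2}$, and then to build, on the exterior side $\{s>0\}$, an exact harmonic field whose restriction to $\{s=0\}$ has the same normal component (namely zero) and the same pointwise norm as $B$. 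This is exactly where the nondegeneracy of type~II enters: matching the norm $|B\ext|=|B|$ on the surface while keeping $B\ext$ curl-free and divergence-free in a collar is, after expressing $B\ext$ in terms of a harmonic potential, a Hamilton--Jacobi--type equation on $\TT^2$ for the generating function of the Poincaré map of $B\ext$, whose solvability is governed by a nonlinear deformation of the cohomological equation; the determinant condition of Definition~\ref{D:torusII} (being different from a specific constant attached to $B$) is precisely the condition guaranteeing that the relevant averaged $2\times2$ matrix is invertible, so that this equation can be solved by the KAM scheme announced in the Introduction. One then takes $\Om'$ to be the region bounded by a nearby invariant torus $\{s=s_0\}$ of $B\ext$ produced by the same KAM argument, and extends $B\ext$ to all of $\RR^3\backslash\overline{\Om'}$ as, say, the field of a magnetic dipole placed inside $\Om$ rescaled to match the total flux, or more carefully as the unique harmonic field in the exterior decaying at infinity with the prescribed tangential trace on $\pd\Om'$; the induced jump in tangential components across $\pd\Om'$ defines the analytic tangent field $J$ on $\pd\Om'$, and \eqref{Eq.FBP7} holds by construction.

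The main obstacle I expect is the same one that makes Theorem~\ref{T:main} nontrivial: the Poincaré (return) map of the sought exterior Beltrami/harmonic field cannot be written down explicitly, so one cannot simply invoke a classical KAM theorem. One must instead run the new KAM theorem ``tailored for divergence-free fields in three dimensions whose Poincaré map cannot be computed explicitly'' that the paper develops, and feed it initial data coming from the straightened field near $\pd\Om$; controlling the small-divisor losses requires the Diophantine condition \eqref{eq:diof}, and controlling the solvability of the homological equations at each step requires the type~II nondegeneracy to persist along the iteration, which is the delicate quantitative point. A secondary technical issue is ensuring the matching is \emph{exact} on $\pd\Om$ (not merely approximate): the conditions $B\ext\cdot N=0$ and $|B\ext|^2=|B|^2$ on $\pd\Om$ must be imposed as constraints that the KAM iteration preserves, which typically forces one to work in a scale of analytic norms and to track two extra ``counterterm'' parameters (analogues of the $\la_j$'s in Theorem~\ref{T:main}) adjusted so that the constraints close up in the limit. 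Once the exterior harmonic field with the correct boundary behaviour exists, everything else --- analyticity of $\Om'$ and of $J$, the divergence-freeness of $J$ on $\pd\Om'$, and the decay at infinity --- is routine and follows as in the fixed boundary case.
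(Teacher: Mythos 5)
Your overall architecture (a harmonic field in a collar outside $\pd\Om$, an outer invariant torus bounding $\Om'$, and the current $J$ read off from the tangential jump across $\pd\Om'$) is the right one, but there are two genuine missteps. First, you miss the simplification that makes this theorem much easier than Theorem~\ref{T:main}: since the jump in $|B|^2$ across $\pd\Om$ must be \emph{zero}, one can take the Cauchy datum for the exterior field to be $X:=B|_{\pd\Om}$ itself. The constraint $d(j_{\pd\Om}^*X^\flat)=0$ required by Theorem~\ref{T:CK} holds automatically because $X$ is the trace of a Beltrami field tangent to $\pd\Om$, so the Cauchy--Kovalevskaya theorem with $\la=0$ produces a harmonic field $h$ in a collar with $h\cdot N=0$ and $|h|^2=|B|^2$ on $\pd\Om$ tautologically. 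No Hamilton--Jacobi equation, and no iteration ``preserving the matching constraints'', is needed; the matching is exact by construction. Your plan to solve a Hamilton--Jacobi equation for the norm matching is at best vacuous (its solution would be the trivial one) and at worst circular, because the solvability of that equation is governed by the type~I condition --- invertibility of the matrix $M$ of Definition~\ref{D:torus} --- which is \emph{not} assumed in Theorem~\ref{T:main2}.

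Second, you misread Definition~\ref{D:torusII}: it is not a determinant condition on an averaged $2\times2$ matrix (that is type~I), but the condition $T+\la\,[\al\cdot\om^\perp/|n|^2]_{\TT^2}\neq0$ on the twist constant. Its role is entirely different from the one you assign to it: this quantity is precisely the twist constant $T_h$ of the invariant torus $\pd\Om$ for the \emph{harmonic} field $h$ (Equation~\eqref{T2} with $\la_2=0$ and $c_2=0$), and its nonvanishing is what allows one to invoke Corollary~\ref{Cor_KAM} and obtain an analytic Diophantine invariant torus of $h$ outside $\BOm$ bounding $\Om'$. Finally, the extension of $B\ext$ beyond $\Om'$ should simply be $B\ext:=0$ in $\RR^3\setminus\overline{\Om'}$, so that $J=h\times N'$ and~\eqref{Eq.FBP7} is trivial; a dipole field placed inside $\Om$ would generically fail to be tangent to $\pd\Om'$, breaking the distributional divergence-free property of $B\ext$, and prescribing the full tangential trace of a decaying exterior harmonic field is overdetermined.
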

\begin{remark}\label{R.coils}
The external surface current $J\ext$ can be approximated (in the sense of distributions) by a current $\tilde J\ext$ whose support consists of finitely many closed curves on $\pd\Om'$, see e.g.~\cite{Ulam}. Then, $\tilde J\ext$ represents the current produced by a number of external coils (and hence it cannot be longer represented as $J\,dS$ for some smooth tangent vector field on $\pd\Om'$). However, $(B,\tilde B\ext,\tilde J\ext,\Om)$ is not a solution of the free boundary problem~\eqref{free} because the boundary terms $\tilde B\ext\cdot N$ and $|\tilde B\ext|^2-|B|^2$ on $\pd\Om$ are small but not necessarily zero.
\end{remark}
\begin{remark}\label{R.freeb}
A minor modification of the proof of Theorem~\ref{T:main2} using Theorem~\ref{L:cohom}, allows us to construct free boundary MHD equilibria with constant pressure $P=c\neq0$ in $\Om$, provided that $|c|$ is small enough. In this case, the boundary condition~\eqref{Eq.FBP6} is replaced by the identity $|B\ext|^2-|B|^2=2c$ on $\pd\Om$, and $\Om$ needs to be a nondegenerate toroidal domain of both types I and II. Since the pressure of the vacuum region is~$0$ and $c\neq0$, this modified construction yields stepped-pressure free boundary equilibria in $\RR^3$.   
\end{remark}

A consequence of these theorems and of the above discussion
about nondegenerate domains is the existence of piecewise smooth
MHD equilibria with nonconstant pressure and (fixed or free) toroidal boundaries of any knot
type. A precise statement is given in Corollary~\ref{C.main}. For the
benefit of the reader, in Section~\ref{S:weak} we recall the definition of weak solutions to the
system~\eqref{Eq.MHD}--\eqref{eq.MHD3}, which is required to make
sense of MHD equilibria that are only piecewise smooth.

\begin{figure}[t]
\centering{
\fontsize{9pt}{11pt}\selectfont
\def\svgwidth{3.333in}
\resizebox{75mm}{!}{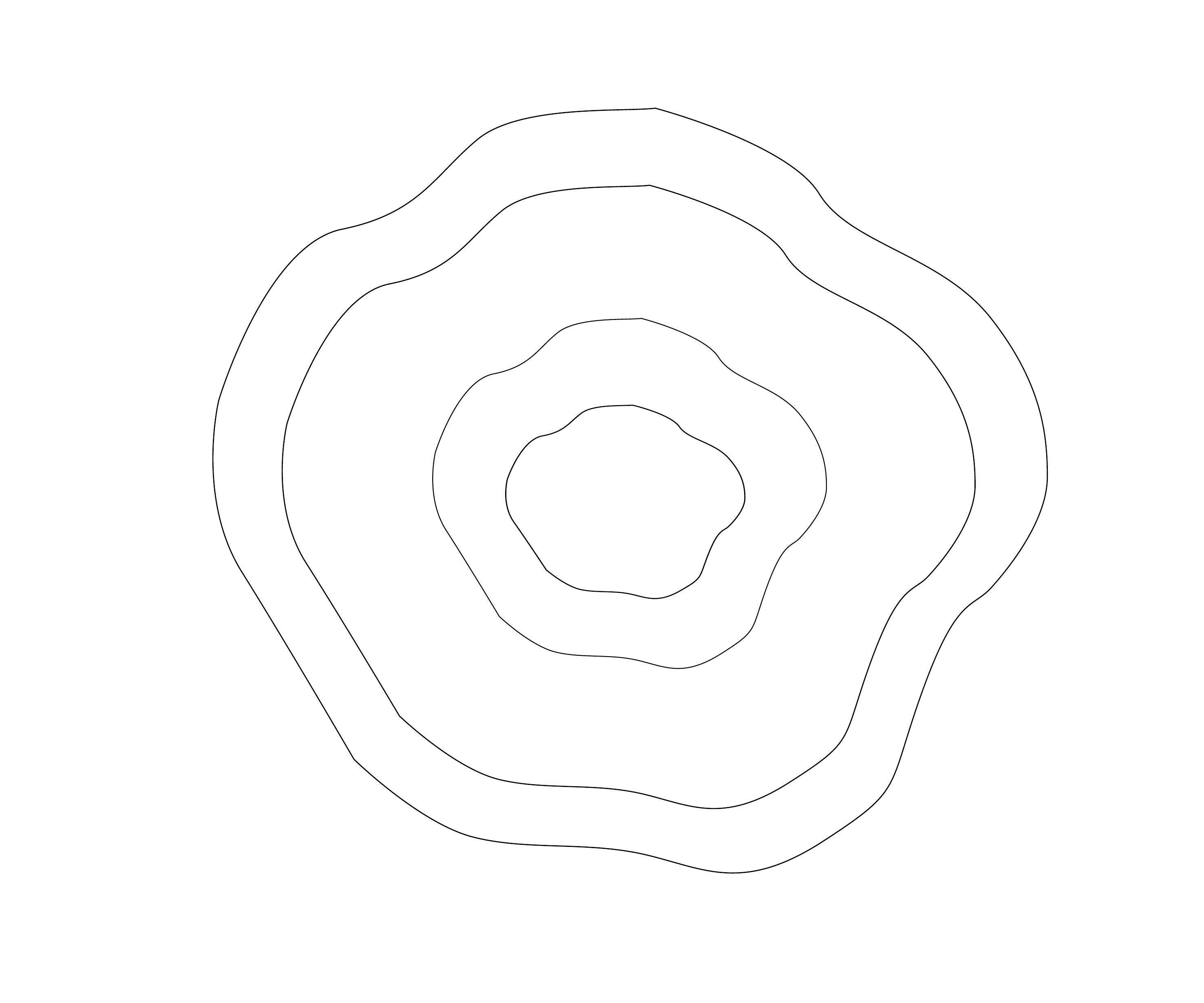}
\caption{A cross section of the nested toroidal domains.}
\label{fig:nested}
}
\end{figure}

It is worth mentioning that a minor modification of the proof of Theorem~\ref{T:main}
permits to prove the existence of Lipschitz continuous force-free
fields with a nonconstant proportionality factor on toroidal domains
of complicated geometry. Details are provided in
Theorem~\ref{T:ff}. This is interesting because, in a certain precise sense, smooth force-free fields with a
nonconstant factor are rare, as discussed in~\cite{MYZ,ARMA}.

\subsection{Strategy of the proof}

The strategy of the proofs of Theorems~\ref{T:main} and~\ref{T:main2} is similar, so let us focus on the former. The basic
idea behind Theorem~\ref{T:main} is motivated by the work of Bruno and
Laurence~\cite{T09} on MHD equilibria on small
perturbations of an axisymmetric toroidal domain. The
perturbative construction they use in their proof, however, hinges strongly on
having approximately axisymmetric solutions, where one can obtain very
precise information about the solutions and their trajectories, and
cannot be extended to deal with toroidal domains that are not
approximately symmetric.

To explain the gist of our approach, let us stick to
the simplest case,
$N=2$. The case of an arbitrary~$N\geq2$ is obtained by repeating the process $N-1$ times.
Our initial observation (Lemma~\ref{L.Euler})
is that, if we have two Beltrami fields $B_1,B_2$ defined on two disjoint
domains $\Om_1,\Om_2':=\Om_2\backslash\overline{\Om_1}$, with
$\overline{\Om_1}\subset\Om_2$, one can define
a piecewise smooth MHD equilibrium on the domain~$\Om_2$, with a certain
piecewise constant pressure function~$P$, provided that the
difference $|B_1|^2-|B_2|^2$ is constant on~$\pd\Om_1$.

We start by choosing $B_1$ as
a nondegenerate Beltrami field in the toroidal domain
$\Om_1$, so that the analytic surface $\pd\Om_1$ is a Diophantine invariant torus of $B_1$.
To construct a Beltrami field $B_2$ in an exterior neighborhood of
$\pd\Om_1$, we use a version of the Cauchy--Kovalevskaya theorem for the
curl operator~\cite{Annals} (see also Appendix~\ref{Ap1}) with a Cauchy datum given by an analytic
vector field tangent to
$\pd\Om_1$. A key aspect of this theorem is that one can only grant
the existence of a local solution to the equation provided that the
Cauchy datum satisfies an additional constraint.
When one takes this constraint into account, showing that
$|B_1|^2-|B_2|^2$ is constant on~$\pd\Om_1$ becomes equivalent to proving the
existence of an analytic solution to a certain nonlinear Hamilton--Jacobi
equation on~$\TT^2$.

The key difficulty of the problem is that, as the toroidal
domains we consider are far from the axisymmetric case, we cannot
extract from the equations enough information about the trajectories
of the vector fields. The first manifestation of this difficulty is
that we have not found a way of effectively using trajectories to
analyze the aforementioned Hamilton--Jacobi equation. Instead, we have
shown that one can exploit the fact that the restriction
$B_1|_{\pd\Om_1}$ is conjugate to a Diophantine rotation to regard the
equation as a nonlinear perturbation of the cohomological equation
which appears in KAM theory.  With this approach, we eventually
establish the existence of analytic solutions by means of a Newton
quadratic scheme (Theorem~\ref{L:cohom}).

The next step is to show that the resulting field~$B_2$ does in fact
have an invariant torus enclosing a toroidal domain
$\Om_2\supset\overline{\Om_1}$, which permits to make sense of the
basic geometric configuration used to construct the MHD
equilibrium. To this end, we prove that $\pd\Om_1$ is a twist (in the
KAM theoretic sense) invariant torus of $B_2$, so that it is
accumulated by a set of Diophantine analytic invariant tori. However,
the key difficulty is that we cannot compute a good approximation for the
trajectories of~$B_2$. This means that we do not have enough information to
apply the existing KAM theorems for divergence-free fields (see
e.g.~\cite{MW,KKP14,Acta,KKP20}), which are based on studying the Poincar\'e map of the field on a transverse section.

To solve this problem, we establish a KAM theorem for
divergence-free vector fields in $\RR^3$ with two key features
that make it rather different from other KAM theorems in the same context~\cite{Sevr,BHT,KKP14}. First, it applies to vector fields which do not need to be
approximately integrable or in Birkhoff normal form. Secondly, the twist condition is written solely in terms of the vector
field and of the approximate invariant torus. An additional advantage
is that the formulas take a particularly simple form when the field is
Beltrami. Recall that a KAM theorem for perturbations of integrable
volume-preserving diffeomorphisms was obtained in~\cite{CS,X,Y}.

\subsection{Organization of the paper}

After recalling the definition of weak MHD equilibria, in
Section~\ref{S:weak} we prove a lemma ensuring that one can
construct piecewise smooth MHD equilibria by gluing two Beltrami
fields defined on non-intersecting domains with a common boundary
component, provided that the boundary traces of these Beltrami fields
satisfy a certain constraint. The main arguments of the proofs
of Theorems~\ref{T:main} and~\ref{T:main2} are presented in Sections~\ref{S:Tmain} and~\ref{S.freebound}. For
clarity of exposition, however, the two essential technical points of
proof (which are of independent interest) are relegated to Sections~\ref{S.cohom}
and~\ref{sec:teo}. Specifically, in Section~\ref{S.cohom} we solve,
using a cohomological equation, the Hamilton--Jacobi equation
associated with the constraint that we came across in
Section~\ref{S:weak}. Also, in Section~\ref{sec:teo} we prove our
new KAM theorem for divergence-free vector
fields in $\RR^3$. Section~\ref{S:nondeg} is devoted to rigorously proving that
thin toroidal domains of any topology are generically
nondegenerate (of type I and II). The existence result for Lipschitz-continuous force-free fields with a
nonconstant factor is presented in
Section~\ref{S:ff}. The paper concludes with two technical
appendices. In the first appendix we show that Beltrami fields are analytic
up to the boundary if the domain is analytic, and in the second we record
certain results for Beltrami fields that we proved
in~\cite{Annals,Acta,ELP} and which are relevant for the problem under
consideration.

\section{Construction of weak MHD equilibria from Beltrami fields}\label{S:weak}

In this section we introduce the definition of a weak MHD equilibrium. We say that a pair $(B,P)$ of class, say, $L^2(\Om)$ is a
{\em weak solution to the stationary MHD equations} in $\Om$ if
\[
\int_{\Om} \left[(B\otimes B)\cdot \nabla w- \left(P+\frac12|B|^2\right)\Div w\right]\, dx=0\quad
\text{and}\quad \int_{\Om}B\cdot \nabla\phi\,dx=0
\]
for any vector field $w\in C^1_c(\Om)$ and any scalar function
$\phi\in C^1(\Om)$. Of course, if $B$ and~$P$ are continuously differentiable, this
is equivalent to saying that they satisfy Equations~\eqref{Eq.MHD}--\eqref{eq.MHD3} in~$\Om$. As usual, the subscript $c$ means that we take functions with compact support.

\begin{lemma}\label{L.Euler}
Let $\{\Om_k\}_{k=1}^N$ be $N\geq2$ bounded domains in $\RR^3$ with
smooth connected boundaries. Assume that these domains are nested in
the sense that $\overline{\Om_{k-1}}\subset \Om_k$ for all $1\leq
k\leq N$, with $\Om_0:=\emptyset$. With
$\Om_k':=\Om_k\backslash\overline{\Om_{k-1}}$, suppose furthermore
that the vector field $B_k$ satisfies the equation $\curl B_k=\la_kB_k$ in~$\Om_k'$ for some nonzero constant $\la_k$. Assume that $B_k$ is tangent to the boundary of $\Om_k'$ and that
\begin{equation}\label{Eq.weak}
|B_{k+1}|^2-|B_k|^2=2c_k \text{ on } \pd\Om_k
\end{equation}
for all $1\leq k\leq N-1$, where $c_k$ are constants. Then
\[
  B(x):=\sum_{k=1}^N B_k(x)\, 1_{\Om_k'}(x)
\]
is a piecewise smooth MHD equilibrium on~$\Om_N$
with piecewise constant pressure
\[
  P(x):=c_0 -\sum_{k=1}^{N-1}c_k\, 1_{\Om_N\backslash\overline{\Om_k}}(x)\,.
\]
Here $c_0$ is any real constant (in particular, it can be chosen so that $P(x)=0$ if $x\in\pd\Om_N$).
\end{lemma}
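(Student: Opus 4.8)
The plan is to verify directly that the pair $(B,P)$ satisfies the two integral identities in the definition of a weak MHD equilibrium on $\Om_N$. First I would record the two facts that make each piece $B_k$ a \emph{classical} MHD equilibrium on $\Om_k'$: since $\curl B_k=\la_k B_k$, we have $B_k\times\curl B_k=\la_k\, B_k\times B_k=0$, and from the vector identity $B_k\times\curl B_k=\tfrac12\nabla|B_k|^2-(B_k\cdot\nabla)B_k$ together with $\Div B_k=0$ this gives $\Div(B_k\otimes B_k)=\tfrac12\nabla|B_k|^2$ in $\Om_k'$. Hence on each $\Om_k'$ the tensor field $B_k\otimes B_k-\tfrac12|B_k|^2\,\mathrm{Id}$ is divergence-free (as $P$ is locally constant there), and $\Div B_k=0$. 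So the only real content is what happens across the interfaces $\pd\Om_k$, $1\le k\le N-1$.

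Next I would take an arbitrary test field $w\in C^1_c(\Om_N)$ and a test function $\phi\in C^1(\Om_N)$ and integrate by parts on each shell $\Om_k'$ separately. For the divergence identity, $\int_{\Om_N}B\cdot\nabla\phi\,dx=\sum_k\int_{\Om_k'}B_k\cdot\nabla\phi\,dx$; integrating by parts on each shell and using $\Div B_k=0$ leaves only boundary terms, and on each interface $\pd\Om_k$ the contributions from the two adjacent shells are $\int_{\pd\Om_k}(B_{k+1}-B_k)\cdot N\,\phi\,dS$, which vanishes because both $B_k$ and $B_{k+1}$ are tangent to $\pd\Om_k$ by hypothesis; the outer boundary $\pd\Om_N$ contributes nothing since $\supp\phi$ is... well, $\phi$ need not have compact support, but $B_N$ is tangent to $\pd\Om_N$, so that term vanishes too. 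For the momentum identity, write $T:=B\otimes B-(P+\tfrac12|B|^2)\,\mathrm{Id}$, so that on each shell $T=B_k\otimes B_k-\tfrac12|B_k|^2\,\mathrm{Id}+(\text{const})\,\mathrm{Id}$; integrating $\int_{\Om_k'}T\cdot\nabla w$ by parts and using $\Div T=0$ inside each shell, I am left with $\sum_{\text{interfaces}}\int_{\pd\Om_k}\big[T_{k+1}-T_k\big]N\cdot w\,dS$, where $T_k$ denotes the one-sided limit from $\Om_k'$.

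The key computation is then to show the jump $[T_{k+1}-T_k]N=0$ on each $\pd\Om_k$. Decompose $N$ into normal and tangential parts relative to the fields: since $B_k$ is tangent to $\pd\Om_k$, the vector $(B_k\otimes B_k)N=(B_k\cdot N)B_k=0$, and likewise $(B_{k+1}\otimes B_{k+1})N=0$; thus $T_kN=-\big(P_k^-+\tfrac12|B_k|^2\big)N$ and $T_{k+1}N=-\big(P_{k+1}^+ +\tfrac12|B_{k+1}|^2\big)N$, where $P_k^-,P_{k+1}^+$ are the (constant) one-sided values of $P$. So $[T_{k+1}-T_k]N=-\big[(P_{k+1}^+-P_k^-)+\tfrac12(|B_{k+1}|^2-|B_k|^2)\big]N$. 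Now the choice of $P$ is exactly engineered so that $P_k^- - P_{k+1}^+ = c_k$: the term $-c_k\,1_{\Om_N\setminus\overline{\Om_k}}$ contributes $0$ inside $\Om_k$ and $-c_k$ outside, so crossing $\pd\Om_k$ outward decreases $P$ by $c_k$. Combined with the hypothesis $|B_{k+1}|^2-|B_k|^2=2c_k$ on $\pd\Om_k$, the bracket vanishes identically, so $[T_{k+1}-T_k]N=0$ and the momentum integral is zero. Finally, the outer boundary term on $\pd\Om_N$ vanishes because $w$ has compact support in $\Om_N$. This proves $(B,P)$ is a weak MHD equilibrium; the continuity of $P+\tfrac12|B|^2$ across each $\pd\Om_k$ is just a restatement of the relation $P_k^- - P_{k+1}^+=c_k$ and \eqref{Eq.weak}, and the freedom in $c_0$ is clear. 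I do not anticipate a genuine obstacle here: the only mild care needed is bookkeeping of one-sided limits and orientations of $N$ at the interfaces, and handling the fact that $\phi$ is not compactly supported (which is fine because $B_N\cdot N=0$ on $\pd\Om_N$).
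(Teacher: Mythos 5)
Your proposal is correct and follows essentially the same route as the paper: verify the two weak identities by integrating by parts on each shell $\Om_k'$, use the Beltrami identity $\Div(B_k\otimes B_k)=\tfrac12\nabla|B_k|^2$ together with tangency to kill the interior and the $(B_k\otimes B_k)N$ boundary terms, and observe that the interface contributions cancel because the pressure jump $P_k^--P_{k+1}^+=c_k$ exactly offsets $\tfrac12\bigl(|B_{k+1}|^2-|B_k|^2\bigr)$. Phrasing the cancellation as the vanishing of the jump $[T_{k+1}-T_k]N$ of the stress tensor is only a cosmetic repackaging of the paper's computation of $I_1+I_2$, and your side remarks (the need for $B_N\cdot N=0$ on $\pd\Om_N$ since $\phi$ is not compactly supported, the orientation bookkeeping of $N$) are handled the same way there.
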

\begin{remark}
As $B_k$ is a Beltrami field defined in smooth domains and tangent to
the boundary, it is standard~\cite{BS} that $B_k$ is smooth up to the
boundary. Therefore, the constraint~\eqref{Eq.weak} makes sense
pointwise. A related analytic regularity result up to the boundary,
which will be needed later on, is proved in
Appendix~\ref{A.analytic}.
\end{remark}

\begin{remark}
The result and the proof remain valid when $\la_k=0$ if the
corresponding vector field $B_k$ is additionally assumed to be
divergence-free in $\Om_k'$.
\end{remark}

\begin{proof}
To keep the formulas as simple as possible, we will prove the result
for $N=2$; the general case is analogous. We start by noticing that, for all $\phi\in C^1(\Om_2)$,
\begin{align*}
\int_{\Om_2}B\cdot \nabla \phi\, dx&=\int_{\Om_1} B_1\cdot \nabla \phi\,
                                       dx+\int_{\Om_2'} B_2\cdot \nabla \phi\,
                                       dx \\
&= \int_{\pd \Om_1} \phi\, B_1\cdot N\, dS+\int_{\pd (\Om_2')} \phi\, B_2\cdot N\, dS=0\,,
\end{align*}
where we have used that $\Div B_1=\Div B_2=0$ in their respective domains and $B_1\cdot N=0$ on~$\pd\Om_1$ and $B_2\cdot N=0$ on~$\partial\Om_2'$. Hence $\Div B=0$ in the sense of distributions.

Let us now take an arbitrary vector field $w\in
C^1_c(\Om_2)$. We can write
\begin{align*}
 I:=&\int_{\Om_2} \left[ (B\otimes B)\cdot \nabla w- \left(P+\frac12|B|^2\right)\Div w\right]\, dx\\
 &=\int_{\Om_1} \left[ (B_1\otimes B_1)\cdot \nabla w- \left(c_0+\frac12|B_1|^2\right)\Div w\right]\, dx\\
 &\qquad +\int_{\Om_2'} \left[ (B_2\otimes B_2)\cdot \nabla w- \left(c_0-c_1+\frac12|B_2|^2\right)\Div w\right]\, dx=:I_1+I_2
\end{align*}
Integrating by parts, and using that $B_1\cdot N=0$ on $\pd\Om_1$, we easily obtain
\begin{align*}
I_1&=\int_{\Om_1}\left[(B_1\otimes B_1)\cdot\nabla w + \frac12\nabla|B_1|^2\cdot w\right]\,dx-\int_{\pd\Om_1}\left (c_0+\frac12|B_1|^2\right)w\cdot N\,dS\\
&=-\int_{\Om_1}\left[\Div(B_1\otimes B_1)-\frac12\nabla|B_1|^2\right]\cdot w\,dx-\int_{\pd\Om_1}\left (c_0+\frac12|B_1|^2\right)w\cdot N\,dS\\
&=-\int_{\pd\Om_1}\left (c_0+\frac12|B_1|^2\right)w\cdot N\,dS\,.
\end{align*}
To pass to the last equation we have used the well known identity for Beltrami fields
\[
\Div(B_1\otimes B_1)=\frac12\nabla|B_1|^2\,.
\]

Analogously, using the same identity for $B_2$ on $\Om_2'$ and that
$B_2\cdot N=0$ on the boundary, we can compute the term $I_2$. Notice
that by the connectedness of the boundaries of $\Om_1$ and $\Om_2$ we
have $\partial\Om_2'=\pd\Om_1\cup\pd\Om_2$, and that the outward
pointing normal vector of $\Om_2'$ on $\pd\Om_1$ is $-N$. We thus obtain
\begin{align*}
I_2&=\int_{\Om_2'}\Big[(B_2\otimes B_2)\cdot\nabla w + \frac12\nabla|B_2|^2\cdot w\Big]\,dx+\int_{\pd\Om_1}\Big(c_0-c_1+\frac12|B_2|^2\Big)w\cdot N\,dS\\
&=\int_{\pd\Om_1}\Big(c_0-c_1+\frac12|B_2|^2\Big)w\cdot N\,dS\,.
\end{align*}
The surface integral is taken only on $\pd\Om_1$ because $w=0$ on $\pd\Om_2$.

Putting together these computations and using the boundary condition~\eqref{Eq.weak}, we finally conclude that
\[
I=\int_{\pd\Om_1}\left[\frac12\left(|B_2|^2-|B_1|^2\right)-c_1\right]w\cdot N\,dS=0\,,
\]
for all $w\in C^1_c(\Om_2)$. It then follows
that $(B,P)$ is a weak solution of the MHD equations in $\Om_2$, as claimed.
\end{proof}

\begin{remark}\label{R:current}
It is easy to check that the plasma current $J:=\curl B$ of the
solution constructed in Lemma~\ref{L.Euler} is the vector-valued distribution
\[
  J =\sum_{k=1}^N\la_k
  B_k\, 1_{\Om_k'}+\sum_{k=1}^{N-1}(B_k-B_{k+1})\times N_k\, dS_k\,,
\]
where $dS_k$ and $N_k$ are the area measure and outer unit normal on
$\pd\Om_k$. The current sheet terms appearing in this formula are a
consequence of the discontinuity of the magnetic field across the
surfaces $\pd\Om_k$.
\end{remark}

\section{Fixed boundary equilibria: proof of Theorem~\ref{T:main}}\label{S:Tmain}

In this section we show how to implement the strategy discussed in the
Introduction to prove the first main result of this paper, modulo some
technical results that will be presented in later sections.

\subsection{Nondegenerate toroidal domains of type I}

Let us begin by defining the class of toroidal domains that we
consider in Theorem~\ref{T:main}.

\begin{definition}\label{D:torus}
A toroidal domain~$\Om\subset\RR^3$ is {\em
nondegenerate of type I}\/ if there exists a Beltrami field $B$
on~$\Om$ such that the following conditions hold:
\begin{enumerate}
\item $\pd\Om$ is a Diophantine invariant torus of the
  field.

  \item The $2\times 2$ constant matrix
\begin{equation}\label{eq.gen}
M:=\int_{\TT^2}G(\vp)^{-1}\,\left(
    \begin{array}{cc}
      1-\om_1\partial_{\vp_1}\mathcal R & -\om_2\partial_{\vp_1}\mathcal R \\
      -\om_1\partial_{\vp_2}\mathcal R & 1-\om_2\partial_{\vp_2}\mathcal R \\
    \end{array}
  \right)\,d\vp
\end{equation}
is invertible. Here $\om$ is the frequency vector of $B$ on $\pd\Om$,
$\vp$ are the linearizing coordinates in Equation~\eqref{eq.torus},
$G$ is the metric matrix (or first fundamental form) of the surface
$\pd\Om$ in the coordinates~$\vp$, and
$\mathcal R(\vp)$ is the unique zero mean solution to
the equation on the torus
\[
\om_1\pd_{\vp_1}\mathcal R+\om_2\pd_{\vp_2}\mathcal R=\varkappa-|X|^2\,,
\]
where $X:= B|_{\pd\Om}$ and the constant~$\varkappa$ is chosen so that
$\int_{\TT^2}(\varkappa-|X|^2)\,d\vp=0$. By rescaling the Beltrami field, we can henceforth assume that the
above constant is $\varkappa=1$ (which has already been considered when writing Equation~\eqref{eq.gen}).
\end{enumerate}
\end{definition}

The Beltrami field and the analytic toroidal
domain satisfying the nondegeneracy assumption are $B_1$ and $\Om_1$,
and the corresponding eigenvalue is~$\la_1$. Clearly, the nondegeneracy of $B_1$ is invariant under rescaling the field by a nonzero constant.

Here and in what follows,
\[
  [f]_{\TT^2}:=
  \frac1{4\pi^2}\int_{\TT^2} f\, d\vp
\]
denotes the average of a function
on~$\TT^2$ (in $\vp$-coordinates) and we set $\om^\perp:=
(\om_2,-\om_1)$ for each two-component vector $\om
=(\om_1,\om_2)$.


\subsection{Construction of the first layer}\label{S.pt}

As $\pd\Om_1$ is an invariant torus of $B_1$ with Diophantine frequency
vector $\om^{(1)}$, it is standard~\cite{LGJV} that one can parametrize the
invariant torus by an embedding $K_1 : \TT^2 \rightarrow \RR^3$
satisfying the equation
\[
L_{\om^{(1)}} K_1 = B_1 \circ K_1\,,
\]
where $L_{\om^{(1)}}K_1:=DK_1\om^{(1)}$ is the pointwise derivative
of~$K_1$ in the direction of~$\om^{(1)}$. In this picture, of course,
the invariant torus is the image $\pd\Om_1=K_1(\TT^2)$. Let us
emphasize from the beginning that parametrizing the invariant tori by
embeddings is essential for the KAM theorem that we will prove in
Section~\ref{sec:teo} (Theorem~\ref{teo:kam:div}), which will play a key role in this proof
later on.

Since the boundary of the domain $\Om_1$ is analytic,
Theorem~\ref{T.analytic} implies that $B_1$ is analytic up to the
boundary. A theorem of Herman~\cite{Y,Yoccoz} then ensures that the linearizing parametrization $K_1$ is analytic, or in other words, there
are analytic coordinates $\vp:\pd\Om_1\to\TT^2$ in which $X_1:=B_1|_{\pd\Om_1}$ takes the form
\[
X_1=G_{K_1}^{-1}DK_1^\top B_1 \circ K_1 = \om_1^{(1)}\pd_{\vp_1}+\om_2^{(1)}\pd_{\vp_2}\,.
\]
Here  $G_{K_1}:=DK_1^\top DK_1$ is the matrix representation of the
pullback of the Euclidean metric to the surface $\pd\Om_1$ obtained
using the embedding $K_1$, so it is a positive definite $2\times 2$ symmetric
(nonconstant) matrix.

Theorem~\ref{L:cohom} implies that, for any constant~$c_2$ that is
small enough in absolute value, there is an analytic vector field
$X_2$ on $\pd\Om_1$ such that:
\begin{enumerate}
\item The $1$-form that is dual to~$X_2$ with respect to the
  metric on~$\pd\Om_1$ induced by the Euclidean metric in~$\RR^3$, is closed.
  \item The pointwise norm of~$X_2$, computed with the induced
    metric on~$\pd\Om_1$, satisfies the equation
    \[
|X_2|^2=(1+b_2)|X_1|^2+c_2
\]
for some constant bounded as $|b_2|\leq C|c_2|$.
\item The vector field $X_2$ depends continuously on the parameter
  $c_2$, in the $C^r$-topology of vector fields for any~$r$, and is Diophantine with frequency vector
\begin{equation}\label{eq.om2}
\om^{(2)}:=(1+c_2)^{1/2}\om^{(1)}\,.
\end{equation}
\end{enumerate}
In particular, one can write
\[
X_2=X_1+\mathcal O(c_2)=\om_1^{(1)}\pd_{\vp_1}+\om_2^{(1)}\pd_{\vp_2}+\cO(c_2)\,,
\]
where in what follows $\cO(c_2)$ stands for a quantity (which may vary
from line to line) whose $C^r$ norm is bounded by $C|c_2|$, for any fixed integer $r$. We remark that the fact that the frequency vectors $\om^{(2)}$ and $\om^{(1)}$ are proportional, cf. Equation~\eqref{eq.om2}, is a consequence of the proof of Theorem~\ref{L:cohom}. It is not a necessary condition, but it is convenient to write all the estimates of the proof of Lemma~\ref{L:Newton} in a simple way, and to keep the same Diophantine constants.

Now we consider the Cauchy problem
\begin{equation*}
\curl B_2'=\la_2 B_2'\,, \qquad B_2'|_{\pd\Om_1}=X_2\,,
\end{equation*}
for some nonzero constant $\la_2\neq \la_1$ that we will fix later. Since
$\pd\Om_1$ and $X_2$ are analytic, and the $1$-form dual to $X_2$
is closed, Theorem~\ref{T:CK} (which is a sort of Cauchy--Kovalevskaya
theorem for the curl operator) implies that
there exists a unique analytic solution to this Cauchy problem in a
neighborhood of $\pd\Om_1$. Eventually, we will only be interested in the
behavior of the solution outside ${\Om_1}$.

By construction, $\pd\Om_1$ is a Diophantine invariant torus of the
vector field $B_2'$. We claim that it is twist (in the KAM sense; see
Definition~\ref{def:ndeg}) for almost all choices of the constant
$\la_2$. This implies that $\pd\Om_1$ is accumulated by a set of
Diophantine invariant tori of $B_2'$ contained in $\RR^3\backslash
\overline{\Om_1}$. Since $B_2'$ is analytic, these tori are analytic
as well. This follows from Corollary~\ref{Cor_KAM} to
the KAM theorem for
divergence-free fields that we shall prove in Section~\ref{sec:teo}.

Let us denote by $K_2:\TT^2\to\RR^3$ an embedding that is a linearizing parametrization of the
invariant torus $\pd\Om_1$ with frequency vector $\om^{(2)}$ of the
vector field $B_2'$. Then, we can introduce coordinates
(which we still denote by~$\vp$) such that $X_2=B_2'|_{\pd\Om_1}$ becomes the Diophantine linear field
\[
X_2=G_{K_2}^{-1}DK_2^\top B_2'\circ K_2=\om_1^{(2)}\pd_{\vp_1}+\om_2^{(2)}\pd_{\vp_2}\,.
\]
In general, $K_2$ is different from the parametrization $K_1$ that
linearizes $X_1$, but it follows from the previous discussion that
both parametrizations differ by a higher order correction, i.e.,
\begin{equation}\label{eq.K2}
K_2=K_1+\cO(c_2)\,.
\end{equation}

Since $B_2'$ satisfies the equation $\curl B_2'=\la_2B_2'$, an easy computation shows that the following identity holds:

\begin{lemma}
$DB_2'^\top+DB_2'=2DB_2'^\top+\la_2 B_2'\,\times$, where $DB_2'$ is the Jacobian matrix of $B_2'$ and $\times$ denotes the vector product, both computed in Cartesian coordinates.
\end{lemma}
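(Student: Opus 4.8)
The plan is to reduce the identity to the elementary fact that the skew-symmetric part of the Jacobian of a vector field is precisely the matrix of the cross product with its curl. Concretely, for any $C^1$ vector field $v$ on an open subset of $\RR^3$, written in Cartesian coordinates as $v=(v_1,v_2,v_3)$, one has $(Dv)_{ij}=\partial_j v_i$, hence $(Dv-Dv^\top)_{ij}=\partial_j v_i-\partial_i v_j$; in particular the diagonal entries vanish. Writing $[a]_\times$ for the skew-symmetric matrix representing the linear map $w\mapsto a\times w$, one has that $[\curl v]_\times$ is skew with $(1,2)$-entry $-(\partial_1 v_2-\partial_2 v_1)$, $(1,3)$-entry $\partial_3 v_1-\partial_1 v_3$, and $(2,3)$-entry $-(\partial_2 v_3-\partial_3 v_2)$. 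Comparing these off-diagonal entries term by term gives the identity
\[
(Dv-Dv^\top)\,w=(\curl v)\times w\qquad\text{for all } w\in\RR^3,
\]
which is the only computation in the argument and amounts to a routine check of six entries.

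With this in hand, I would specialize $v=B_2'$ and invoke the Beltrami equation $\curl B_2'=\la_2 B_2'$ satisfied by $B_2'$, obtaining $DB_2'-DB_2'^\top=\la_2\,B_2'\,\times$ as linear maps in Cartesian coordinates. Adding $2DB_2'^\top$ to both sides yields
\[
DB_2'^\top+DB_2'=2DB_2'^\top+\la_2\,B_2'\,\times,
\]
which is exactly the claimed identity.

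There is no genuine obstacle here: the statement is a bookkeeping consequence of $\curl B_2'=\la_2 B_2'$. The only point requiring a little care is fixing the sign convention for the matrix $[a]_\times$ representing $a\times\cdot$ so that it is consistent with the orientation used in the definition of $\curl$; once this convention is pinned down, the verification of $Dv-Dv^\top=[\curl v]_\times$ is immediate and the lemma follows.
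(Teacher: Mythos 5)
Your proposal is correct and follows essentially the same route as the paper: both write $DB_2'^\top+DB_2'=2DB_2'^\top+(DB_2'-DB_2'^\top)$, identify the skew-symmetric part $DB_2'-DB_2'^\top$ with the matrix of $\curl B_2'\,\times$, and conclude via $\curl B_2'=\la_2 B_2'$. The only difference is that you spell out the entrywise verification of the curl identity, which the paper takes as known.
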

\begin{proof}
The proof is straightforward. Indeed,
$$DB_2'^\top+DB_2'=2DB_2'^\top + (DB_2'-DB_2'^\top)=2DB_2'^\top + \curl B_2'\,\times\,,$$
so the claim follows from the equation $\curl B_2'=\la_2B_2'$.
\end{proof}

To invoke Corollary~\ref{Cor_KAM}, we must check the twist condition
(cf.\ Definition~\ref{def:ndeg}). This involves computing a
two-component vector field (or $2\times1$ matrix) appearing in
Equation~\eqref{eq:condA}, which in this case takes the form
\begin{equation*}
A_2=
 - \frac{G_{K_2}^{-1}}{|n_2|^2}
\Big[2DK_2^\top DB_2'^\top n_2+\la_2 DK_2^\top (B_2'\times n_2)\Big]\,.
\end{equation*}
Here $DB_2'^\top$ and $B_2'$ are evaluated at $K_2(\vp)$ and the
normal vector
\[
n_2(\vp):=\partial_{\vp_1}K_2(\vp) \times \partial_{\vp_2}K_2(\vp)
\]
is defined in terms of $K_2$ as in Definition~\ref{D:torus}. Observe that $DK_2$ is a $3\times 2$ matrix.

Since the vector field $B_2'\times n_2$ is tangent to $\pd\Om_1$ and
perpendicular to $B_2'$, we infer that there is a nonvanishing vector (given by a $2\times 1$ matrix) $\alpha_2$ on $\pd\Om_1$ such that
\[
B_2'\times n_2=DK_2\,\alpha_2\,.
\]
Therefore,
\[
  DK_2^\top (B_2'\times n_2)=(DK_2^\top DK_2)
  \alpha_2=G_{K_2}\alpha_2\,.
\]
The matrix $A_2$ then takes the form
\begin{align*}
A_2&=- \frac{2G_{K_2}^{-1}}{|n_2|^2}DK_2^\top DB_2'^\top n_2-\frac{\la_2}{|n_2|^2}\,\alpha_2\\
&=\frac{2G_{K_2}^{-1}}{|n_2|^2}DK_2^\top L_{\om^{(2)}}n_2-\frac{\la_2}{|n_2|^2}\,\alpha_2=:A^{(1)}_2+A^{(2)}_2\,,
\end{align*}
where we have used Lemma~\ref{lem:trace} to pass to the second
equality. It is clear from this expression that the vector $A_2(\vp)$
only depends on the way the torus $\pd\Om_1$ is embedded in $\RR^3$,
on the Diophantine vector $\om^{(2)}$, on the parametrization $K_2$
linearizing $X_2$, and on the eigenvalue $\la_2$.

According to Definition~\ref{def:ndeg}, the invariant torus $\pd\Om_1$
of $B_2'$ is {\em twist}\/ if the twist constant
\begin{align}
T_2:&=\big([A^{(1)}_2]_{\TT^2}+ [A^{(2)}_2]_{\TT^2}\big)\cdot (\om^{(2)})^\perp\notag\\
&=[A^{(1)}_2]_{\TT^2}\cdot (\om^{(2)})^\perp - \la_2|\om^{(2)}|\bigg[\frac{F_2}{|n_2|^2}\bigg]_{\TT^2}\label{T2}
\end{align}
is nonzero. Here  $ F_2(\vp)$ is the function defined as the projection of the field $\alpha_2$ in the $(\om^{(2)})^\perp$ direction, i.e.,
\[
F_2:=\alpha_2 \cdot \frac{(\om^{(2)})^\perp}{|\om^{(2)}|}\,.
\]
This function is nonvanishing because the field $DK_2\,\alpha_2$ is perpendicular to $B_2'|_{\pd\Om_1}$, so $\alpha_2$ and $\om^{(2)}$ cannot be proportional at some point of $\TT^2$.

Arguing in the same way, we obtain an analogous expression for the
twist constant $T_1$ of the invariant torus $\pd\Om_1$ for the vector
field $B_1$. As $\pd\Om_1$ is an invariant torus for $B_1$ and $B_2'$,
and as the corresponding parametrizations $K_1$ and $K_2$ and Diophantine vectors $\om^{(1)}$ and $\om^{(2)}$ differ just by an error of order $c_2$ by Equations~\eqref{eq.om2}-\eqref{eq.K2}, we conclude that
\begin{align*}
T_2&=T_1-\la_2\Big(|\om^{(1)}|\bigg[\frac{F_1}{|n_1|^2}\bigg]_{\TT^2}+O(c_2)\Big)+\la_1|\om^{(1)}|\bigg[\frac{F_1}{|n_1|^2}\bigg]_{\TT^2}+O(c_2)\\
&=: T_1-a\la_2+b\,,
\end{align*}
where $n_1:=\partial_{\vp_1}K_1 \times
  \partial_{\vp_2}K_1$ and $F_1$ is also nonvanishing. The constants $a,b$ are therefore nonzero if
$|c_2|$ is small enough, so $T_2\neq 0$ provided that
\begin{equation}\label{lambda}
\la_2\neq \frac{b+T_1}{a}=1+\frac{T_1}{|\om^{(1)}|\Big[\frac{F_1}{|n|^2}\Big]_{\TT^2}}+O(c_2)\,.
\end{equation}
This shows that $\pd\Om_1$ is a twist invariant torus of $B_2'$ for almost all choices of $\la_2$.

Hence, we can take a Diophantine analytic invariant torus $\Si_2$ of the
vector field $B_2'$, lying outside $\overline{\Om_1}$, which is
$\eta$-close to $\pd\Om_1$. By this we mean that, for any fixed~$r$, there is a
diffeomorphism $\Psi_1$ of~$\RR^3$ which maps $\pd\Om_1$ into
$\Si_2$ and which is close to the identity as $\|\Psi_1-\id\|_{C^r}<\eta$.
The invariant torus~$\Si_2$ is then the boundary of a toroidal domain
$\Om_2\supset\overline{\Om_1}$. It is easy to check that the matrix $M_2$ in Equation~\eqref{eq.gen}, associated to the vector field $B_2'|_{\pd\Om_2}$ is related to the matrix $M_1$ of $B_1|_{\pd\Om_1}$ as
\[
M_2=M_1+O(\eta+|c_2|)\,.
\]
As $M_1$ is invertible and $\pd\Om_1$ is accumulated by Diophantine invariant tori of $B_2'$,
we can therefore take $\eta$ (and $|c_2|$) small enough so that $M_2$ is invertible
too. We then conclude that $\Om_2$ is a nondegenerate toroidal
domain of type I.

\subsection{Conclusion of the proof}

As $\Om_2$ is another nondegenerate toroidal domain of type I,  we can repeat
  the argument to construct a vector field $B_3'$ in a neighborhood of $\pd\Om_2$ that solves the equation
\[
\curl B_3'=\la_3 B_3'\,, \qquad B_3'|_{\pd\Om_2}=X_3\,,
\]
for some constant $\la_3\neq \la_2$, and the Cauchy datum $X_3$ satisfies
\[
|X_3|^2=(1+b_3)|\widetilde X_2|^2+c_3
\]
with arbitrarily small constants $c_3$ and $b_3=O(c_3)$. Here
$\widetilde X_2:=B_2'|_{\pd\Om_2}$. Again, one can pick $c_3$ and
$\la_3$ so that $\pd\Om_2$ is a twist Diophantine invariant torus of
$B_3'$. Therefore there is an analytic Diophantine invariant torus of
$B_3'$, which is the boundary of another nondegenerate toroidal domain of type I $\Om_3\supset
\overline{\Om_2}$.

This process can be iterated $N-1$ times to obtain a family of
(analytic) nested tubes $\{\Om_k\}_{k=1}^N$, different constants
$\la_k$, small constants $c_k,b_k$ and vector fields $B_k'$ satisfying
$\curl B_k'=\la_kB_k'$ in $\Om_k':=\Om_k\backslash
\overline{\Om_{k-1}}$ for all $2\leq k\leq N$.

To construct a weak solution $(B,P)$ of the MHD equations in the
toroidal domain $\Om_N$, we set
\begin{align*}
B(x)&:=B_1(x)\,1_{\Om_1}(x)+ \sum_{k=2}^N B_k'(x)\, 1_{\Om_k'}(x)\prod_{j=2}^k(1+b_j)^{-1/2}\\
  P(x)&:=  p_1 \,1_{\Om_1}(x)+ \sum_{k=2}^N p_k \, 1_{\Om_k'}(x)\,.
\end{align*}
The constant $p_1$ is arbitrary, and the constants $p_k$ are defined in terms of $c_j,b_j$ as
\[
p_k:=p_1-\frac12\sum_{l=2}^{k}\prod_{j=2}^l(1+b_j)^{-1}c_l\,,
\]
Note that, generically, $p_k\neq p_j$ if $k\neq j$.
A straightforward application of Lemma~\ref{L.Euler} shows that
$(B,P)$ is a piecewise smooth MHD equilibrium with all the properties
stated in Theorem~\ref{T:main}.

\begin{remark}
In view of potential applications of our method of proof in the context of plasma physics, we want to highlight that the toroidal surfaces $\pd\Om_2,\dots,\pd\Om_N$ we construct in the proof of Theorem~\ref{T:main} are not too distorted from one to the other, and their separation is small. Quantifying such distortions is feasible (but not trivial) using quantitative versions of the Cauchy-Kovalevskaya and the KAM theorems (see also Remark~\ref{R.quant}).
\end{remark}

\section{Free boundary equilibria: proof of
  Theorem~\ref{T:main2}}\label{S.freebound}

We first introduce the class of toroidal domains that we
consider in Theorem~\ref{T:main2}.

\begin{definition}\label{D:torusII}
A toroidal domain~$\Om\subset\RR^3$ is {\em
nondegenerate of type II}\/ if there exists a Beltrami field $B$
on~$\Om$ (with eigenvalue $\la$) such that the following conditions hold:
\begin{enumerate}
\item $\pd\Om$ is an invariant torus of the
  field with Diophantine frequency vector $\om$.

\item The {\em twist constant}\/ $T$ (see Definition~\ref{def:ndeg}) satisfies
  \[
T+\la\,\bigg[\frac{\al\cdot \om^\perp}{|n|^2}\bigg]_{\TT^2}\neq0\,,
  \]
  where $\al\cdot\om^\perp\equiv \al_1\om_2-\al_2\om_1$, $K:\TT^2\to\RR^3$ is the linearizing embedding of~$\pd\Om$ in
  coordinates~$\vp$ (so, in particular, $\pd\Om=K(\TT^2)$),
  $n:=\pd_{\vp_1}K\times \pd_{\vp_2}K$ is a normal vector and the $\RR^2$-valued
  function $\al(\vp)$ is defined as
  \[
    (B\circ K)\times n=: DK\, \al\,.
  \]
\end{enumerate}
The nondegeneracy of $B$ is invariant under rescaling the field by a nonzero constant.
\end{definition}

The proof of Theorem~\ref{T:main2} follows the same strategy as the proof of
Theorem~\ref{T:main}, although it is easier because it does
not make use of the Hamilton--Jacobi equation we study in
Section~\ref{S.cohom}. Therefore, to make the presentation easier, we will use
the same notation as in the previous section without further mention.

As the analytic toroidal domain is nondegenerate in the sense of
Definition~\ref{D:torusII}, there exists a Beltrami field $B$ in $\Om$, satisfying the equation
\[
\curl B =\la B
\]
for some nonzero constant $\la$ and the boundary condition $B\cdot N=0$, such that $\pd\Om$ is a Diophantine
invariant torus of $B$ with frequency vector $\om$ and its twist
constant satisfies
\begin{equation}\label{nondegh}
T+\la\bigg[\frac{\al\cdot \om^\perp}{|n|^2}\bigg]_{\TT^2}\neq 0\,.
\end{equation}

We focus on the special case of a globally force-free equilibrium with $P=0$, whose magnetic field $B$ is a Beltrami field. In Remark~\ref{R.freeb} we explain how to modify this construction to obtain more general equilibria. It is then clear that
$(B,B\ext,\Om)$ are a solution to the equations for a free
boundary MHD equilibrium with external current $J\ext$ if the external
magnetic field and the external current satisfy
\begin{subequations}\label{external}
\begin{align}
\curl B\ext &=J\ext  \!\quad \text{  in } \RR^3\backslash \overline{\Om}\,,\label{Bext1}\\
\Div B\ext &=0 \qquad \text{  in } \RR^3\backslash \overline{\Om}\,,\\
  B\ext \cdot N&=0 \qquad \text{  on } \pd\Om\,, \label{Bext3}\\
|B\ext |^2-|B|^2&=0 \qquad \text{  on } \pd\Om \label{Bext4}\,.
\end{align}
To ensure that $J\ext$ is a current sheet, we also aim to construct a
toroidal domain $\Om'\supset \BOm$ and a tangent vector field $J$
on~$\pd\Om'$ such that
\begin{align}
J\ext &=J\, dS \,,\\
 B\ext &=0 \qquad \text{  in } \RR^3\backslash \overline{\Om'}\,,\label{Bext6}
\end{align}
\end{subequations}
where $dS$ is the surface measure on $\pd\Om'$.
Note that the tangent vector field  $J$ must be divergence-free with respect to the induced
metric on~$\pd\Om'$ because Equation~\eqref{Bext1} implies that, in the sense of distributions,
\[
0=  \Div
  J\ext=(\Div_{\pd\Om'}J)\, dS\,.
\]

Thus proving Theorem~\ref{T:main2} boils down to constructing a domain
$\Om'$ and an analytic divergence-free tangent vector field~$J$
on~$\pd\Om'$ such that the solution to the exterior div-curl
problem~\eqref{Bext1}--\eqref{Bext3} on $\RR^3\backslash\BOm$
satisfies the conditions~\eqref{Bext4}--\eqref{Bext6}.

To construct solutions to this overdetermined system, we follow
the same philosophy as in Section~\ref{S:Tmain}. Let $X:=B|_{\pd\Om}$
be the restriction of the Beltrami field $B$ on the boundary of the
domain~$\Om$.  Observe that $X$ is analytic in view of
Theorem~\ref{T.analytic} and the associated $1$-form $X^\flat$ is closed on
$\pd\Om$ by Theorem~\ref{T:CK}. Therefore, the Cauchy problem
\[
\curl h =0\,, \qquad \Div h=0\,, \qquad h|_{\pd\Om}=X\,,
\]
has a unique analytic solution in a small tubular neighborhood~$U$ of $\pd\Om$
as a consequence of Theorem~\ref{T:CK}. By construction, $h\cdot N=0$
and $|h|^2=|B|^2$ on $\pd\Om$. Lemma~\ref{L.Euler}
then ensures that the field
\[
B\, 1_\Om + h \, 1_{U'}\,,
\]
with $U':= U\backslash\BOm$,
is a weak solution to
the stationary MHD equations in the toroidal domain $\Om_2:=U\cup \Om$.

Proceeding just as in Subsection~\ref{S.pt}, the twist constant $T_h$
of the invariant torus $\pd\Om$ of the harmonic field $h$ can
be readily shown to be
\[
T_h=T+\la |\om|\bigg[\frac{F}{|n|^2}\bigg]_{\TT^2}= T+\la \bigg[\frac{\al\cdot\om^\perp}{|n|^2}\bigg]_{\TT^2}\,,
\]
where $T$ is the twist constant of~$B$ (cf. Definition~\ref{def:ndeg}).
This is simply Equation~\eqref{T2}, where we have set $\la_2=0$
because the field~$h$ is harmonic and $c_2=0$ because $|h|^2=|B|^2$ on $\pd\Om$.

The nondegeneracy assumption of type II, i.e., Equation~\eqref{nondegh}, ensures that $T_h\neq 0$.
Thus Corollary~\ref{Cor_KAM} implies that $\pd\Om$ is accumulated (in
both components of its complement) by analytic Diophantine invariant
tori of $h$. We can therefore choose an analytic domain $\Om'\supset\BOm$
whose boundary is one of these invariant tori.

To conclude, let us now define the vector field
$$
B\ext(x) :=h(x)\,1_{\Om'\backslash \BOm}(x)
$$
for $x\in \RR^3\backslash \BOm$. As $h$ is divergence free in $\Om'\backslash \BOm$ and tangent to
$\pd\Om$ and $\pd\Om'$, an elementary computation shows that $\Div
B\ext =0$ in the sense of distributions. Furthermore, the
corresponding current is readily computed using that
$$
\int_{\RR^3\backslash \BOm} B\ext\cdot\curl v\,dx  =\int_{\pd\Om'} v\cdot J\, dS
$$
for any $v\in C^\infty_c(\RR^3\backslash \BOm,\RR^3)$,
where
\[
J:=  h\times N'
\]
and $N'$ is the outer unit normal on
$\pd\Om'$. Therefore, $(B\ext,J,\Om')$ satisfies the system~\eqref{external},
so Theorem~\ref{T:main2} follows.

\begin{remark}\label{R.quant}
Quantitative versions of the Cauchy--Kovalevskaya theorem (Theorem~\ref{T:CK}) and the KAM
theorem (Theorem~\ref{teo:kam:div}) provide an estimate for the
separation that we can obtain between the current sheet $\pd\Om'$ and
the domain $\Om$ in terms of the Diophantine constants of the
frequency vector~$\om$ and of the analyticity radii and the analytic
norms of the different objects that appear in the construction
(namely, the Beltrami field $B|_{\pd\Om}$ and the linearizing
embedding~$K$ of the invariant torus~$\pd\Om$). However, this is a hard job that will be the object of future contributions.
\end{remark}

\section{Solving a Hamilton--Jacobi problem via the cohomological equation}\label{S.cohom}

Let $\Om$ be an analytic nondegenerate toroidal domain of type I in $\RR^3$. By definition, there exists a Beltrami field $B$ in $\Om$ that satisfies the equation
\[
\curl B=\la B
\]
for some constant $\la$, $\partial \Om$ is a Diophantine invariant
torus of $B$, and the corresponding matrix $M$ defined in Equation~\eqref{eq.gen} is invertible. Arguing as in the beginning of Section~\ref{S.pt} we infer that $B$ is analytic up to the boundary and there are analytic coordinates $\vp:\partial\Om\to \TT^2$ such that
\[
Y:=B|_{\pd\Om}=\om_1\pd_{\vp_1}+\om_2\pd_{\vp_2}\,,
\]
where $\om\in\RR^2$ is a Diophantine frequency vector. With a slight
abuse of notation, in this section we will use the same name for a
quantity on $\pd\Om$ (a function or a vector field) and for its
expression in these coordinates.

In this section, if $Z_1,Z_2$ are two vector fields on $\TT^2$, $|Z_1|$ and $Z_1\cdot Z_2$
denote the norm of $Z_1$ and the scalar product of $Z_1$ and $Z_2$, respectively, computed with respect to the metric on
$\pd\Om$ induced by the Euclidean metric, which we write in the
coordinates~$\vp$. As before, $[f]_{\TT^2}$ will denote the mean of a function $f$ in $\vp$-coordinates, i.e.,
\[
[f]_{\TT^2}:=\frac{1}{4\pi^2}\int_{\TT^2}f(\vp)\,d\vp
\]
with $d\vp:=d\vp_1\,d\vp_2$.

The main result of this section is the following theorem. This result
is used in the proof of Theorem~\ref{T:main} to construct
Cauchy data which satisfy the constraint equation~\eqref{Eq.weak}
appearing in the Cauchy--Kovalevskaya theorem for
the curl operator.

\begin{theorem}\label{L:cohom}
  Let $\Om$ and $Y$ be as above. Then, for any constant~$c$ with small
  enough absolute value, there is a nonnegative constant~$b$ and an analytic vector field $X$ on
  $\pd\Om$ of the form
  \[
X=(1+c)^{1/2}Y+\nabla H+a_1\nabla\vp_1+a_2\nabla\vp_2\,,
  \]
  where $\nabla$ denotes the gradient operator on~$\pd\Om$ associated
  with the induced metric,  such that:
\begin{enumerate}
\item $|X|^2=(1+b)|Y|^2+c$.

  \item $X$ is analytically conjugate to a linear field with Diophantine
  frequency vector $\om'=(1+c)^{1/2}\om$.

  \item For any fixed~$r$, the scalar analytic function~$H$ on~$\pd\Om$ and the    constants $b,a_1,a_2$ are bounded as
    \[
\|H\|_{C^r(\pd\Om)}+|b|+|a_1|+|a_2|\leq C|c|\,.
    \]
  \end{enumerate}
  Moreover, $X$ depends continuously on the parameter $c$ in
  the $C^r$-topology of vector fields, for any fixed~$r$.
\end{theorem}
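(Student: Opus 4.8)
The plan is to recast condition~(i) as a single scalar nonlinear equation on $\TT^2$ --- the Hamilton--Jacobi equation of the section's title --- and to solve it by a quadratically convergent (Newton/KAM-type) scheme in spaces of analytic functions on $\TT^2$, the linearized equation at each step being a cohomological equation $L_\om h=g$ for the constant-coefficient transport operator $L_\om:=\om_1\pd_{\vp_1}+\om_2\pd_{\vp_2}$. First I would record the elementary identities $\langle\nabla f,Z\rangle=Z(f)$ and $\langle Y,\nabla\vp_j\rangle=\om_j$ for the induced metric, so that, setting $V:=\nabla H+a_1\nabla\vp_1+a_2\nabla\vp_2$ and expanding
\[
|X|^2=(1+c)|Y|^2+2(1+c)^{1/2}\langle Y,V\rangle+|V|^2\,,
\]
condition~(i) is equivalent to
\[
2(1+c)^{1/2}\bigl(L_\om H+a\cdot\om\bigr)+\bigl|\nabla H+a_1\nabla\vp_1+a_2\nabla\vp_2\bigr|^2=(b-c)\,|Y|^2+c\,.
\]
At the trivial approximation $H\equiv0$, $a=0$, $b=c$ the left side vanishes and the right side equals $c$, so the initial error has size $|c|$, which is the origin of the bound $\le C|c|$ in~(iii). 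Two structural remarks make the rest possible: the metric-dual $1$-form of $X$ equals $(1+c)^{1/2}Y^\flat+dH+a_1\,d\vp_1+a_2\,d\vp_2$, which is closed because $Y^\flat$ is closed (Theorem~\ref{T:CK}), so the ansatz automatically satisfies the constraint demanded by the Cauchy--Kovalevskaya theorem; and the rotation vector of the leading term $(1+c)^{1/2}Y$ is exactly $(1+c)^{1/2}\om$, which is why that scaling is forced and why condition~(ii) is within reach.

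For the Newton step, I would linearize the left side of the Hamilton--Jacobi equation at an approximate solution $(H_n,a_n,b_n)$: the part acting on the function correction $h$ is $2\,X_n(h)$, and since $X_n=(1+c)^{1/2}Y+O(c)$ one peels off $2(1+c)^{1/2}L_\om h$ as the principal transport operator and absorbs the quadratically small remainder $2\,(X_n-(1+c)^{1/2}Y)(h)$ into the new error. One is then left, at each step, with a cohomological equation $L_\om h=g_n$ together with a finite-dimensional linear system for the correction $(\alpha_1,\alpha_2,\beta)$ of $(a,b)$. The compatibility condition $[g_n]_{\TT^2}=0$ needed to invert $L_\om$ (with the usual loss of analyticity width, controlled by the Diophantine condition~\eqref{eq:diof} and Cauchy estimates) fixes one relation among the three constants; the remaining two come from requiring that the rotation vector of the updated field $X_{n+1}$ stay equal to the Diophantine vector $(1+c)^{1/2}\om$, which is precisely what preserves~(ii) along the iteration. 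After eliminating $\beta$, the resulting $2\times2$ system for $(\alpha_1,\alpha_2)$ has coefficient matrix $M+O(c)+O(\text{error}_n)$, where $M$ is exactly the matrix~\eqref{eq.gen} of Definition~\ref{D:torus}: its appearance is traced to the fact that the zero-mean solution of $L_\om H=\text{const}\cdot(|Y|^2-1)$ is a multiple of $\cR$, so the sensitivity of the mean constraints to $a$ is governed by averages of $G^{-1}$ against $d\cR\otimes\om$. The type-I nondegeneracy hypothesis is what keeps this system uniformly solvable throughout.

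With the linearized step under control, convergence is the standard KAM bookkeeping: choosing a geometric sequence of analyticity widths $\rho_n\downarrow\rho_\infty>0$ and using that $\text{error}_{n+1}\le C\rho_n^{-\sigma}(\text{error}_n)^2$, the quadratic gain beats the loss and the iterates converge in a fixed strip to an analytic $X$ --- equivalently, analytic $H$ and constants $a_1,a_2,b$ --- solving the Hamilton--Jacobi equation, with $\|H\|_{C^r(\pd\Om)}+|b|+|a_1|+|a_2|\le C|c|$; the sign $b\ge0$ is checked a posteriori from an averaging (Cauchy--Schwarz) estimate for $[|X|^2]_{\TT^2}$. Condition~(ii) then follows: $X$ is nowhere zero, being a small perturbation of the nonvanishing field $(1+c)^{1/2}Y$, and its rotation vector has been pinned to the Diophantine vector $(1+c)^{1/2}\om$, so Herman's theorem~\cite{Y,Yoccoz} supplies an analytic conjugacy to the linear flow $(1+c)^{1/2}\om$ (alternatively, the conjugating diffeomorphism can be built in parallel with $X$ along the same scheme). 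Finally, continuous dependence of $X$ on $c$ in every $C^r$ topology is inherited from the fact that each step, and all the estimates controlling it, are continuous (indeed locally Lipschitz) in $c$, so the convergence is locally uniform.

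I expect the main obstacle to be the coupled linearized problem: handling simultaneously the infinite-dimensional cohomological equation and the finite-dimensional system for the constants, showing that the latter's matrix remains an $O(c)$-and-$O(\text{error})$ perturbation of the nondegeneracy matrix $M$ along the whole iteration, and at the same time keeping the rotation vector exactly equal to $(1+c)^{1/2}\om$ at every step so that~(ii) passes to the limit. Matching the algebra of the mean constraints precisely to the matrix $M$ of Definition~\ref{D:torus}, and propagating uniform constants through the quadratic scheme, is where the real work lies.
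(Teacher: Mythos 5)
Your overall architecture coincides with the paper's: after normalizing $[|Y|^2]_{\TT^2}=1$, condition~(i) becomes the scalar Hamilton--Jacobi equation
\[
2(1+c)^{1/2}\bigl(L_\om H+a\cdot\om\bigr)+\bigl|\nabla H+a_1\nabla\vp_1+a_2\nabla\vp_2\bigr|^2=(b-c)|Y|^2+c\,,
\]
the trivial approximation has error of size $|c|$, the ansatz keeps $X^\flat$ closed automatically, and the nondegeneracy matrix $M$ of Definition~\ref{D:torus} is exactly the coefficient matrix of the $2\times 2$ system that determines the corrections to $(a_1,a_2)$ through mean (solvability) conditions. All of this is as in Section~\ref{S.cohom}.

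There is, however, a genuine gap in your Newton step. You linearize the transport part as $2X_n(h)$ and propose to keep only $2(1+c)^{1/2}L_\om h$, ``absorbing the quadratically small remainder $2(X_n-(1+c)^{1/2}Y)(h)$ into the new error''. That remainder is \emph{not} quadratic in the invariance error $E_n$: since $X_n-(1+c)^{1/2}Y=\nabla H_n+a_1^{(n)}\nabla\vp_1+a_2^{(n)}\nabla\vp_2$ remains of size $O(|c|)$ throughout the iteration while $h=O(\ga^{-1}\de_n^{-\tau}E_n)$, the dropped term is $O(|c|\,\ga^{-1}\de_n^{-\tau-1}E_n)$. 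A recursion $E_{n+1}\leq C|c|\,\ga^{-1}\de_n^{-\tau-1}E_n+\dots$ is only linearly convergent, and the accumulated small-divisor factors $\prod_n\de_n^{-\tau-1}$ (with $\sum_n\de_n<\rho$ forced, hence $\de_n\to0$) overwhelm any geometric gain from $|c|<1$; one cannot keep $\de_n$ bounded below without exhausting the analyticity strip. The paper's resolution is precisely to couple the Hamilton--Jacobi equation with the conjugacy equation $R_c(H,v,a)=0$ for $\Phi=\id+v$ and to pull the linearized equation back by $\Phi_n$ at each step: since $\Phi_n^*X_n=(1+c)^{1/2}\om-(I+Dv_n)^{-1}E_v^n$, the transport operator becomes $2(1+c)^{1/2}L_\om$ plus a term proportional to the \emph{conjugacy error} $E_v^n$ acting on $\hat\xi$, which is genuinely quadratic. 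You mention building the conjugating diffeomorphism ``in parallel'' only as an optional route to~(ii), but it is indispensable for the convergence of the scheme itself; it is also what produces the two additional solvability conditions in which $M$ appears (the Hamilton--Jacobi equation alone imposes only one mean condition, fixed by the correction to $b$), and it is what guarantees that the limit field has rotation vector exactly $(1+c)^{1/2}\om$ rather than only approximately. Finally, a minor point: since $b=2(1+c)^{1/2}a\cdot\om+\bigl[\,|\nabla H+a_1\nabla\vp_1+a_2\nabla\vp_2|^2\,\bigr]_{\TT^2}$, its sign does not follow from a Cauchy--Schwarz argument, as the first summand need not be nonnegative.
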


\begin{remark}
We are working in the analytic category because we need analytic solutions to
apply the Cauchy--Kovalevskaya theorem in the proof of
Theorem~\ref{T:main}. In this category, we can prove
Theorem~\ref{L:cohom} using a quadratic Newton scheme. Using
instead a Nash-Moser iteration, one can prove a completely analogous result in the $C^r$ setting, for large enough $r$.
\end{remark}

\begin{remark}
The expression of~$X$ guarantees that the dual $1$-form of $X$ (computed with respect to the metric
  induced from $\RR^3$), which we denote by $X^\flat$, is closed: $d
  X^\flat=0$. This condition is also required to apply the
  Cauchy--Kovalevskaya theorem. We also observe that the introduction of nonzero constants $a_1,a_2$ in the expression of $X$ is important to prove Lemma~\ref{L:Newton} below, where they are crucial to ensure the solvability of certain cohomological equations that appear in the Newton's iterative scheme.
\end{remark}

\begin{proof}
As in Section~\ref{S:Tmain}, we can rescale~$B$  so that
\begin{equation}\label{eqnorm}
[|Y|^2]_{\TT^2}=1\,.
\end{equation}

In coordinates $\vp$, condition (i) for the vector field $X$ is
equivalent to picking the analytic function $H:\TT^2\to\RR$ and the
constants $a=(a_1,a_2)$ so that the equation
\begin{equation}\label{eqHJ}
2(1+c)^{1/2}L_\om H+|\nabla H|^2+2\nabla H\cdot (a\nabla\vp)+f=b|Y|^2+c(1-|Y|^2)
\end{equation}
is satisfied. Here $L_\om\equiv
\om_1\partial_{\vp_1}+\om_2\partial_{\vp_2}$ is the derivative in the
$\om$~direction and we have set
\begin{align*}
f&:=(a\nabla\vp)^2+2(1+c)^{1/2}(a\om)\,,\\
b&:=\Big[|\nabla H|^2+2\nabla H\cdot (a\nabla\vp)+f\Big]_{\TT^2}\,.
\end{align*}
Throughout this proof, we use
the shorthand notation
\begin{align*}
a\nabla \vp&:= a_1\nabla \vp_1+a_2\nabla \vp_2\,,\\
a\om&:=a_1\om_1+a_2\om_2\,.
\end{align*}

To study the existence of solutions to Equation~\eqref{eqHJ} it is convenient to define the nonlinear operator
$$
T_c(H,a):=2(1+c)^{1/2}L_\om H+|\nabla H|^2+2\nabla H\cdot (a\nabla\vp)+f-b|Y|^2-c(1-|Y|^2)\,.
$$
The definition of the constant $b$ ensures that $[T_c(H,a)]_{\TT^2}=0$.

To study the operator $T_c$ we will make use of the Banach space
$\dot\cH_\rho$ of holomorphic functions~$H$ on the complex strip
\begin{equation}\label{Derho}
  \Delta(\rho):=\{\vp : \text{Re}\,\vp\in\TT^2\,,\; |\text{Im}\,\vp|<\rho\}
\end{equation}
that have zero mean on $\{\text{Im}\,\vp=0\}$ (i.e.,
$[H]_{\TT^2}=0$). This space is endowed with the supremum norm
$\|H\|_\rho := \sup_{\vp\in \Delta(\rho)} |H(\vp)|$. We will also
denote by $\dot\cH_\rho$ an analogous space of vector or matrix-valued
functions. With some abuse of notation, we still use the notation
$\vp$ for the complexification of the toroidal coordinates~$\vp$, and the same name for a function (vector or matrix) on $\TT^2$ and its complexification on $\Delta(\rho)$. Since the induced metric on $\pd\Om$ is analytic and $Y$ is also an analytic vector field, it then follows that there is some $\rho_0>0$ such that $T_c$ defines a map
\[
T_c: \dot\cH_\rho \to \dot\cH_\rho
\]
for all $\rho<\rho_0$.

To solve the equation
\begin{equation}\label{eqTcH}
T_c(H,a)=0\,,
\end{equation}
we will crucially use the additional requirement that $X$ is
analytically conjugate to the linear field $(1+c)^{1/2}\om$. More
precisely, let us consider the equation
\begin{equation}\label{eqreduc}
\Phi^*\Big((1+c)^{1/2}\om+\nabla H+a\nabla\vp\Big)-(1+c)^{1/2}\om=0\,,
\end{equation}
where $\Phi(\vp):=\vp+v(\vp)$ is a diffeomorphism of $\TT^2$ and
$v:\TT^2\to \RR^2$. We denote the LHS of this equation by $R_c(H,v,a)$.

Our goal is to find analytic solutions  $(H,v,a)$ to Equations~\eqref{eqTcH}
and~\eqref{eqreduc} when $|c|$ is small enough. Notice that~\eqref{eqreduc} automatically guarantees that condition~(ii) is satisfied.

To solve this equation, we apply Lemma~\ref{L:Newton} to the
approximate solution
$$(H_0,v_0,a_0):=(0,0,0)\,.$$
We shall now use the
notation of this lemma without further notice. As
$T_c(H_0,a_0)=-c(1-|Y|^2)$ and $R_c(H_0,v_0,a_0)=0$, it is clear that
\[
  E_0=\|1-|Y|^2\|_{\rho}|c|\,.
\]
It is obvious that there is $c_0>0$ small enough such that the
assumption~\eqref{eqE} holds for all $|c|<c_0$ (of course, the smallness assumption on $v_0$ is also satisfied because $\|v_0\|_{\rho}=0$). It remains to check the generic condition on the matrix
$M^{(0)}$, cf. Equation~\eqref{eq.genM}. Since $\Phi_0=\id$, an easy
computation shows that the columns of the $2\times2$ matrix $M^{(0)}$ are given
by the vectors
\[
[\nabla \vp_i-\om_i\nabla L_\om^{-1}(1-|Y|^2)]_{\TT^2}\,,
\]
with $i=1,2$. In terms of the positive definite symmetric matrix $G$ describing
the metric on~$\pd\Om$ in the $\vp$-coordinates, it is immediate to check that Equation~\eqref{eq.genM} is equivalent to
\[
\det\Bigg[G(\vp)^{-1}\cdot\left(
    \begin{array}{cc}
      1-\om_1\partial_{\vp_1}\mathcal R & -\om_2\partial_{\vp_1}\mathcal R \\
      -\om_1\partial_{\vp_2}\mathcal R & 1-\om_2\partial_{\vp_2}\mathcal R \\
    \end{array}
  \right)\Bigg]_{\TT^2}\neq 0
\]
where $\mathcal R(\vp)$ is the unique zero mean solution to the equation
\[
\om_1\pd_{\vp_1}\mathcal R+\om_2\pd_{\vp_2}\mathcal R=1-|Y|^2\,.
\]
This condition is immediately satisfied, by Definition~\ref{D:torus}, if $B$ is a
nondegenerate Beltrami field of type I on the toroidal domain $\Om$.

For any $\rho'\in(0,\rho)$, we can then conclude from
Lemma~\ref{L:Newton} that there exists a unique
triple  $(H,v,a)\in \dot\cH_{\rho'}
\times\dot\cH_{\rho'}\times\RR^2$ in a neighborhood of~$(0,0,0)$ such that Equations~\eqref{eqTcH}
and~\eqref{eqreduc} hold, provided that $|c|$ is small enough. It is
clear that $(H,v,a)$ depends continuously on~$c$.

The bound~(iii) then follows from the estimate~\eqref{eqbound} below, the usual Cauchy estimate
\[
\|H\|_{C^r(\TT^2)}+|a|\leq C_r \|H\|_{\rho'}+|a|\leq C|c|\,,
\]
and the obvious bound
\[
|b|\leq C(|a|+|a|^2+|a|\|H\|_{C^1(\TT^2)}+\|H\|_{C^1(\TT^2)}^2)\leq C|c|
\]
for~$|c|<1$.
\end{proof}

\subsection{Existence of solutions of the Hamilton-Jacobi equation}\label{SS.prop}

In this section we prove the basic lemma used to establish the
existence of analytic solutions to
Equations~\eqref{eqTcH}-\eqref{eqreduc}. To this end, note that
Equation~\eqref{eqreduc} reads as
\begin{equation}\label{eqred2}
R_c(H,v,a):=(1+c)^{1/2}L_\om v-\nabla H\circ(\id+v)-(a\cdot\nabla\vp)\circ(\id+v)=0\,.
\end{equation}
Here and in what follows, when the operator $L_\om$ acts on vector or
matrix-valued functions, its action is understood componentwise.

To solve the system
\begin{equation}\label{TR}
T_c(H,a)=0\,, \qquad R_c(H,v,a)=0\,,
\end{equation}
we shall use Newton's quadratic scheme and the R{\"u}ssmann estimates
for analytic solutions to cohomological equations~\cite{Rus}. We recall that the
constants $\ga>0$ and $\tau>1$ in the proof appear in the
definition of the Diophantine vector $\om$, and that one can assume $\ga\leq1$ without any loss of generality.

\begin{lemma}\label{L:Newton}
  Let us take $c\in[-\frac12,\frac12]$ and consider a triple
  $(H_0,v_0,a_0)\in(\dot\cH_\rho, \dot\cH_\rho,\RR^2)$. For any
  $\rho'\in(0,\rho)$, if
  $\|v_0\|_{\rho}$ and
  \begin{equation}\label{eqE}
E_0:=\|T_c(H_0,a_0)\|_{\rho}+\|R_c(H_0,v_0,a_0)\|_{\rho}
\end{equation}
are smaller than a certain constant $\ep_0>0$ that depends on~$\rho'$
but not on~$c$, and if the approximate solution $(H_0,a_0,v_0)$
satisfies the generic assumption given by Equation~\eqref{eq.genM}
below, then there exists a unique solution
$(H,v,a)\in \dot\cH_{\rho'} \times\dot\cH_{\rho'}\times \RR^2$ to
Equations~\eqref{eqTcH} and~\eqref{eqred2} (or, equivalently,
\eqref{TR}) bounded as
\begin{equation}\label{eqbound}
\|H-H_0\|_{\rho'}+\|v-v_0\|_{\rho'}+|a-a_0|<CE_0\,.
\end{equation}
\end{lemma}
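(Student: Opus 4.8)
The plan is to set up a Newton--Nash--Moser quadratic iteration in the scale of Banach spaces $\dot\cH_\rho$, where at each step we linearize the coupled system $(T_c,R_c)$ around the current approximation $(H_j,v_j,a_j)$, solve the linearized system using the R\"ussmann estimates for the cohomological equation $L_\om u = g-[g]_{\TT^2}$, and control the loss of analyticity radius so that after infinitely many steps one still retains a strip of width $\rho'>0$. First I would compute the Fr\'echet derivatives of $T_c$ and $R_c$ at a point $(H_0,v_0,a_0)$ satisfying the approximate-solution hypothesis. Differentiating $T_c$ with respect to $H$ produces the leading term $2(1+c)^{1/2}L_\om\,\delta H$ plus lower-order terms of size $\mathcal O(\|\nabla H_0\|+|a_0|)$, which are small by~\eqref{eqE}; differentiating with respect to $a$ yields two vectors spanning (up to small corrections) the columns of the matrix $M^{(0)}$ in~\eqref{eq.genM}. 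Differentiating $R_c$ in $v$ gives the dominant term $(1+c)^{1/2}L_\om\,\delta v$, again with small perturbations because $\|v_0\|_\rho$ is assumed tiny.

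The key algebraic point is that the linearized system is triangular up to small errors and solvable: given the residuals $(T_c(H_0,a_0),R_c(H_0,v_0,a_0))$ with zero mean (the mean of $T_c$ vanishes by construction of $b$, and the mean of $R_c$ can be absorbed into the choice of $a$), one first solves a $2\times2$ linear system for the increment $\delta a$, whose coefficient matrix is $M^{(0)}+\mathcal O(E_0)$ and hence invertible by the genericity hypothesis~\eqref{eq.genM} once $E_0$ is small; then one solves $2(1+c)^{1/2}L_\om\,\delta H = -T_c(H_0,a_0)+(\text{known terms involving }\delta a)$ via R\"ussmann, losing a bit of radius; and finally one solves $(1+c)^{1/2}L_\om\,\delta v = -R_c(H_0,v_0,a_0)+(\text{known terms involving }\delta H,\delta a)$, again by R\"ussmann. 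The small-divisor estimates give $\|\delta H\|_{\rho-\sigma}+\|\delta v\|_{\rho-\sigma}+|\delta a|\le C\sigma^{-\tau'}E_0$ for a suitable $\tau'$ depending on $\tau$. One then checks the quadratic estimate $E_1\le C\sigma^{-2\tau'}E_0^2$ for the new residual, coming from the Taylor remainder of $(T_c,R_c)$ (these maps are quadratic in $(H,a)$ apart from the composition with $\id+v$, so the remainder is genuinely of second order in the increments). With the standard geometric choice of radii $\sigma_j=\rho_0 2^{-j}$ and $E_j\le E_0^{(3/2)^j}$ type superexponential decay, the iteration converges in $\dot\cH_{\rho'}$ for any $\rho'<\rho$ provided $E_0<\ep_0(\rho')$, and the limit $(H,v,a)$ solves~\eqref{TR} with the bound~\eqref{eqbound}. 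Uniqueness in a neighborhood follows from the same linearized-invertibility estimate applied to the difference of two solutions.

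The main obstacle I expect is not the abstract convergence scheme — that is routine KAM bookkeeping — but rather verifying that the genericity condition propagates: one must ensure that the $2\times2$ matrix $M^{(j)}$ governing the $\delta a$-step stays uniformly invertible along the whole iteration, not merely at the initial step. Since $M^{(j)}$ depends on $\Phi_j=\id+v_j$ and on $L_\om^{-1}(1-|Y|^2)$ through the current $H_j$, and the increments are controlled by $E_j\to0$, one gets $M^{(j)}=M^{(0)}+\mathcal O(E_0)$ uniformly in $j$, so the determinant stays bounded away from zero; but writing this out carefully (keeping track of how the composition with $\id+v_j$ interacts with the averaging $[\,\cdot\,]_{\TT^2}$ and with the loss of radius) is the technically delicate part. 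A secondary subtlety is that $b$ is not a free parameter but is \emph{defined} in terms of $(H,a)$, so it must be treated as a dependent quantity throughout — one checks directly that the definition of $b$ is exactly what makes $[T_c(H,a)]_{\TT^2}=0$, which is the compatibility condition needed to invert $L_\om$ at every step, and that the final bound $|b|\le C|c|$ follows a posteriori from $\|H\|_{C^1}+|a|\le C|c|$.
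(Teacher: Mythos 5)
Your overall architecture --- a Newton iteration in the scale $\dot\cH_\rho$, R\"ussmann estimates for the cohomological equation, a triangular linearized system whose $\RR^2$-component is governed by an invertible matrix close to $M^{(0)}$, and the propagation of the genericity condition along the iteration --- matches the paper's. But there is a genuine gap in how you propose to reduce the linearized equations to constant-coefficient cohomological equations. You assert that the linearization of $T_c$ in $H$ is $2(1+c)^{1/2}L_\om\,\delta H$ plus lower-order terms of size $\mathcal O(\|\nabla H_0\|+|a_0|)$ ``which are small by~\eqref{eqE}''. This is not what \eqref{eqE} gives: $E_0$ controls the residuals $T_c(H_0,a_0)$ and $R_c(H_0,v_0,a_0)$, not the size of $H_0$ or $a_0$ (in the iteration these are only kept uniformly bounded, not small). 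The true linearization is $2X_0\cdot\nabla\,\delta H+\cdots$ with $X_0=(1+c)^{1/2}\om+\nabla H_0+a_0\nabla\vp$, a genuinely variable-coefficient transport operator. Even in the regime where $\|H_0\|_\rho+|a_0|=O(|c|)$ is small, discarding the variable part perturbatively leaves an error of order $|c|$ times the increment, i.e.\ \emph{linear} in the current residual with a fixed small constant; combined with the small-divisor losses $\ga^{-1}\de_j^{-\tau}$ on strips shrinking like $\de_j=\de_0 2^{-j}$, a linearly convergent scheme does not close (the product of the per-step factors blows up superexponentially). The paper's resolution, which your proposal is missing, is to use the second unknown to straighten the operator exactly up to a term proportional to the error: pulling back by $\Phi_0=\id+v_0$ one has $\Phi_0^*X_0=(1+c)^{1/2}\om-(I+Dv_0)^{-1}E_v^0$, so the term dropped from the equation for $\hat\xi:=\xi\circ\Phi_0$ is a product of $E_v^0$ with the increment, hence genuinely quadratic; similarly the $\eta$-equation is reduced to constant coefficients by Zehnder's substitution $\eta=(I+Dv_0)\widetilde\eta$ together with the identity obtained by differentiating the defining relation of $E_v^0$, so that the dropped term there is $(DE_v^0)\widetilde\eta$. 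Without some such device the quadratic estimate $E_1\leq C\ga^{-a}\de^{-b}E_0^2$ that you invoke is not available.

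A secondary point: your triangular solve is stated in the wrong order. The $2\times2$ system for $\delta a$ cannot be written down first, because its coefficient matrix $M^{(0)}$ (as defined through \eqref{eq.genM}) has columns $\nabla\vp_i+\nabla(\xi_i^H+\xi_i^a+\xi_i^\om)$, which require first solving the $\delta H$-equation \emph{parametrically} in $\delta a$ (the free constant $\be$ correcting $b$ handles its mean), and only then imposing the zero-mean solvability condition of the $\delta v$-equation to determine $\delta a$. Since you name the correct matrix and the correct genericity hypothesis this is fixable, but as written the first step of your scheme is circular.
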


\begin{proof}
To set a quadratic Newton iteration, we introduce corrections
$(\xi,\eta,\al)$ to the approximate solution so that
\[
(H_1,v_1,a_1):=(H_0,v_0,a_0)+ (\xi,\eta,\al)
\]
is a solution to the equations modulo a quadratic error, which is
bounded by $CE_0^2$ (precise estimates will be shown later). We also take a constant $b_0$ ensuring that $[T_c(H_0,a_0)]_{\TT^2}=0$, and introduce the correction
\[
b_1:=b_0+\beta\,,
\]
where the constant $\beta$ will be fixed later.

Setting $E_H^0:=T_c(H_0,a_0)$ and $E_v^0:=R_c(H_0,v_0,a_0)$, we then obtain $(\xi,\eta,\al)$ as solutions to the linearized equations
\begin{multline}\label{eqlin1}
2X_0\cdot\nabla\xi +2\nabla H_0\cdot (\alpha\nabla\vp)+2(a_0\nabla\vp)\cdot(\alpha\nabla\vp)\\
+2(1+c)^{1/2}(\alpha\om)-\be|Y|^2=-E_H^0
\end{multline}
and
\begin{multline}\label{eqlin2}
(1+c)^{1/2}L_\om\eta-\Big(D(\nabla H_0)\circ(\id+v_0)\Big)\eta-(\al\nabla\vp)\circ(\id+v_0)\\
-\Big(D(a_0\nabla\vp)\circ(\id+v_0)\Big)\eta=-E_v^0+\nabla\xi\circ(\id+v_0)\,.
\end{multline}
In Equation~\eqref{eqlin1}, the vector field $X_0$ is defined as
\[
X_0:=(1+c)^{1/2}\om+\nabla H_0+a_0\nabla\vp\,,
\]
and the constant $\beta$ will be chosen later to ensure the
solvability of the equation (that is, so that a certain zero mean
condition holds). In Equation~\eqref{eqlin2}, the symbol $D$ is used
to denote the Jacobian matrix of a vector field.

Taking the pullback of Equation~\eqref{eqlin1} with the diffeomorphism $\Phi_0:=\id+v_0$, defining the function $\hat\xi:=\xi\circ\Phi_0$, and using that
\[
\Phi^*_0X_0=(1+c)^{1/2}\om-(I+Dv_0)^{-1}E_v^0\,,
\]
we can rewrite~\eqref{eqlin1} as
\begin{multline*}
2(1+c)^{1/2}L_\om\hat\xi-2\Big((I+Dv_0)^{-1}E_v^0\Big)\hat\xi\\
=\Phi_0^*\Big(-E_H^0+\be|Y|^2-2\nabla H_0\cdot (\alpha\nabla\vp)-2(a_0\nabla\vp)\cdot(\alpha\nabla\vp)-2(1+c)^{1/2}(\alpha\om)\Big)\,.
\end{multline*}
In this equation, $I$ denotes the $2\times 2$ identity matrix. We also
observe that if $\|v_0\|_\rho$ is small enough, the matrix $I+Dv_0$ is
invertible. The second summand, which denotes the action of the vector
field $-2(I+Dv_0)^{-1}E_v^0$ (understood as a first order differential
operator) on the function $\hat\xi$, is in fact a quadratic term;
precise estimates will be given below. Thus, we can drop this term and consider the following equation:
\begin{multline}\label{eqlin1b}
2(1+c)^{1/2}L_\om\hat\xi\\
=\Phi_0^*\Big(-E_H^0+\be|Y|^2-2\nabla H_0\cdot (\alpha\nabla\vp)-2(a_0\nabla\vp)\cdot(\alpha\nabla\vp)-2(1+c)^{1/2}(\alpha\om)\Big)\,.
\end{multline}

Following Zehnder~\cite{Zehnder}, to study Equation~\eqref{eqlin2} we define a new function $\widetilde\eta$ as
\[
\eta=:(I+Dv_0)\widetilde\eta\,.
\]
Computing the Jacobian matrix of the equation that defines $E_v^0$,
one obtains the identity
\begin{multline*}
DE_v^0=(1+c)^{1/2}L_\om(Dv_0)-\Big(D(\nabla H_0)\circ(\id+v_0)\Big)(I+Dv_0)\\
-\Big(D(a_0\nabla\vp_0)\circ(\id+v_0)\Big)(I+Dv_0)\,.
\end{multline*}
Plugging this expression into Equation~\eqref{eqlin2}, and dropping the term $(DE_v^0)\widetilde\eta$, which is quadratic, we can write
\begin{align}\label{eqlin2b}
L_\om\widetilde\eta=\frac{1}{(1+c)^{1/2}}(I+Dv)^{-1}\Big((\al\nabla\vp+\nabla\xi)\circ(\id+v_0)-E_v^0\Big)\,.
\end{align}

Summarizing, we have changed the original linearized system of equations by the equivalent linear cohomological Equations~\eqref{eqlin1b} and~\eqref{eqlin2b}. Choosing the constant $\beta$ in Equation~\eqref{eqlin1b} so that
\[
\Big[\Phi_0^*\Big(-E_H^0+\be|Y|^2-2\nabla H_0\cdot (\alpha\nabla\vp)-2(a_0\nabla\vp)\cdot(\alpha\nabla\vp)-2(1+c)^{1/2}(\alpha\om)\Big)\Big]_{\TT^2}=0\,,
\]
Equation~\eqref{eqlin1b} admits a unique zero-mean solution $\xi$,
depending on the constant vector $\alpha$, which is of the form
\begin{equation*}
\xi=\xi^E+(\xi_1^H+\xi_1^a+\xi_1^\om)\alpha_1+(\xi_2^H+\xi_2^a+\xi_2^\om)\alpha_2\,.
\end{equation*}
Here
\begin{align*}
\xi^E&:=\Phi_{0*}L_\om^{-1}\Phi_0^*\Bigg(\frac{\beta_{0}|Y|^2-E_H^0}{2(1+c)^{1/2}}\Bigg)\,,\\
\xi^H_i&:=\Phi_{0*}L_\om^{-1}\Phi_0^*\Bigg(\frac{\beta_{1}^{(i)}|Y|^2-\nabla H_0\cdot\nabla\vp_i}{(1+c)^{1/2}}\Bigg)\,,\\
\xi^a_i&:=\Phi_{0*}L_\om^{-1}\Phi_0^*\Bigg(\frac{\beta_{2}^{(i)}|Y|^2-(a_0\nabla\vp)\cdot\nabla\vp_i}{(1+c)^{1/2}}\Bigg)\,,\\
\xi^\om_i&:=\Phi_{0*}L_\om^{-1}\Phi_0^*\Bigg(\beta_{3}^{(i)}|Y|^2-\om_i\Bigg)\,,
\end{align*}
and the constants
$\beta_{0},\beta_{1}^{(i)},\beta_{2}^{(i)},\beta_{3}^{(i)}$ (with
$i=1,2$) guarantee that all the above functions of the form
$\Phi_0^*\big(\cdots\big)$ have zero mean. This ensures that the
action of the operator $L_\om^{-1}$ (mapping functions of zero mean to
functions of zero mean) is well defined. Note, in particular,
\[
\beta=\beta_{0}+(\beta_{1}^{(1)}+\beta_{2}^{(1)}+\beta_{3}^{(1)})\alpha_1+(\beta_{1}^{(2)}+\beta_{2}^{(2)}+\beta_{3}^{(2)})\alpha_2\,.
\]

Next, let us plug the expression for $\xi$ in Equation~\eqref{eqlin2b} and
consider the $2\times 2$ matrix-valued function $M^{(0)}$ whose
columns are the vector fields
\[
\nabla\vp_i+\nabla(\xi^H_i+\xi^a_i+\xi^\om_i)
\]
with $i=1,2$. The solvability of Equation~\eqref{eqlin2b} then follows
if and only if one can pick a vector $\alpha\in\RR^2$ such that
\begin{equation}\label{eqalpha}
\Big[(I+Dv_0)^{-1}M^{(0)}\circ(\id+v_0)\Big]_{\TT^2}\alpha=\Big[(I+Dv_0)^{-1}\big(E_v^0-\nabla \xi^E\circ(\id+v_0)\big)\Big]_{\TT^2}\,.
\end{equation}
This linear equation has a solution if and only if the matrix $M^{(0)}$ satisfies the invertibility condition
\begin{equation}\label{eq.genM}
\det\Big[(I+Dv_0)^{-1}M^{(0)}\circ(\id+v_0)\Big]_{\TT^2}\neq0\,.
\end{equation}
This is the generic assumption appearing in the statement of the
lemma. Note that Equation~\eqref{eq.genM} only depends on
$(H_0,v_0,a_0)$, on the vector field $Y$ and on the domain $\Om$.

We have then proved that, fixing the constants $\alpha$ and $\beta$
as above, there is a unique solution $(\xi,\widetilde\eta)$ to the
linearized equations~\eqref{eqlin1b}-\eqref{eqlin2b}. Now let us
estimate the analytic norms of these solutions to show that this
scheme is indeed quadratic, and that it can be iterated because the
norms of the corrected approximate solutions are uniformly
bounded. For this we use R\"ussmann estimates~\cite{Rus}:
if $A\in \dot\cH_\rho$ and $\om$ is a Diophantine vector, for
each~$\de>0$ the cohomological equation $L_\om B =A$
has a unique solution $B\in \dot\cH_{\rho-\delta}$ that can be bounded as
$$\|B\|_{\rho-\delta} \leq C \gamma^{-1} \delta^{-\tau} \|A\|_\rho\,.$$
The constant $C$ is independent of~$\de$

In what follows, let us fix a small constant $0<\delta<1$, and $\delta<\rho$ (which
will measure the loss of analytic band in the iteration) and assume
that $E_0<\ep_0$ (cf. Equation~\eqref{eqE}), where $\ep_0$ satisfies the
smallness condition
\begin{equation}\label{eqE2}
{\ep_0}\ll \gamma^6\delta^{9+6\tau}\,.
\end{equation}
By assumption, $\|v_0\|_\rho<\ep_0$, and hence~\eqref{eqE2} implies that $\|v_0\|_\rho\ll\de$, which guarantees, by the Cauchy estimate, that $I+Dv_0$ is close to the unit matrix.

A straightforward computation using R\"ussmann estimates and the Cauchy estimate for derivatives then implies
\begin{align*}
\|\xi^E\|_{\rho-\de}&\leq C\ga^{-1}\de^{-\tau}\|E_H^0\|_{\rho}\,,\\
\|\xi_i^H\|_{\rho-2\de}&\leq C\ga^{-1}\de^{-\tau-1}\|H_0\|_\rho\,,\\
\|\xi_i^a\|_{\rho-\de}&\leq C\ga^{-1}\de^{-\tau}|a_0|\,,\\
\|\xi_i^\om\|_{\rho-\de}&\leq C\ga^{-1}\de^{-\tau}\,.
\end{align*}
Here we have used that the condition $\|v_0\|_\rho\ll\de$ ensures that
the diffeomorphism $\Phi_0$ is close to the identity, and hence the
constant $C$ can be taken independent of $v_0$. The constant does
depend on $Y$ and $\Om$, though.

Solving $\alpha$ in Equation~\eqref{eqalpha}, one then concludes
\begin{align*}
&|\alpha|\leq C\Big(\|E_v^0\|_{\rho}+\ga^{-1}\de^{-1-\tau}\|E_H^0\|_{\rho}\Big)\leq C\ga^{-1}\de^{-1-\tau}E_0\,,\\
&\|\xi\|_{\rho-2\de}\leq C\Big(\ga^{-1}\de^{-\tau}\|E_H^0\|_{\rho}+\ga^{-2}\de^{-2-2\tau}E_0\Big)\leq C\ga^{-2}\de^{-2-2\tau}E_0\,.
\end{align*}
Now, solving $\widetilde\eta$ in Equation~\eqref{eqlin2b}, and using
again that $\|v_0\|_{\rho}\ll\de$, we readily estimate~$\eta$ as
\begin{equation*}
\|\eta\|_{\rho-4\de}\leq C\ga^{-3}\de^{-3-3\tau}E_0\,.
\end{equation*}
Analogously, the constant $\beta$ in Equation~\eqref{eqlin1b} can be bounded as
\[
|\beta|\leq C\Big(\|E_H^0\|_{\rho}+\de^{-1}|\al|\Big)\leq C\ga^{-1}\de^{-2-2\tau}E_0\,.
\]
In these bounds, the constant $C$ only depends on $\|H_0\|_\rho$, $|a_0|$, $Y$ and $\Om$.

In order to show that this scheme can be iterated, let us check that
the norms of the corrected approximate solution $(H_1,v_1,a_1)$ and the constant $b_1$ remain uniformly bounded. Indeed, if $\|H_0\|_\rho+|a_0|+|b_0|<\varkappa_0$ for some positive constant $\varkappa_0$, we can easily derive that
\begin{align*}
&\|H_1\|_{\rho-2\de}+|a_1|+|b_1|\leq \|H_0\|_{\rho}+|a_0|+|b_0|+C\ga^{-2}\de^{-2-2\tau}E_0<\varkappa_0\,,
\end{align*}
where in the last inequality we have used the assumptions~\eqref{eqE}
and~\eqref{eqE2}. Analogously, if $E_0$ is small enough (cf. Equation~\eqref{eqE2}),
\begin{align*}
\|v_1\|_{\rho-4\de} &\leq \|v_0\|_{\rho}+C\ga^{-3}\de^{-4-3\tau}E_0\ll \de\,,\\
\|(I+Dv_1)-(I+Dv_0)\|_{\rho-5\de}&=\|D\eta\|_{\rho-5\de}\leq C\ga^{-3}\de^{-4-3\tau}E_0\ll 1\,.
\end{align*}
The generic assumption~\eqref{eq.genM} is then satisfied in the
iteration because the difference $M^{(1)}-M^{(0)}$ is bounded as
\[
\|M^{(1)}-M^{(0)}\|_{\rho-5\de}\leq C\ga^{-3}\de^{-4-3\tau}E_0\ll 1\,.
\]
Therefore, if $\de$ is small enough,
\begin{multline*}
\Big|\det\Big[(I+Dv_1)^{-1}M^{(1)}\circ(\id+v_1)\Big]_{\TT^2}\Big|\\
\geq \Big|\det\Big[(I+Dv_0)^{-1}M^{(0)}\circ(\id+v_0)\Big]_{\TT^2}\Big|-C\de>0
\end{multline*}
because $M^{(0)}$ satisfies an invertibility condition by hypothesis.

To complete the proof of the proposition, we have to check that the new
errors $E_H^1:=T_c(H_1,a_1)$ and $E_v^1:=R_c(H_1,v_1,a_1)$ are
quadratic with respect to the errors $E_H^0$ and $E_v^0$. This follows
from the fact that
\begin{align*}
E_H^1&=(\nabla\xi)^2+2\nabla\xi\cdot(\alpha\nabla\vp)+(\alpha\nabla\vp)^2-2\Big((I+Dv_0)^{-1}E_v^0\Big)\xi\circ\Phi_0\,,\\
E_v^1&=\Big(D^2(\nabla H_0+a_0\nabla\vp)\circ\Phi_0\Big) \eta\otimes\eta\\
&\qquad\qquad+\Big(D(\nabla\xi-\al\nabla\vp)\circ\Phi_0\Big)\eta+\Big(DE_v^0(I+Dv_0)^{-1}\Big)\eta\,.
\end{align*}
A straightforward computation using the previous estimates
shows
\begin{align*}
\|E_H^1\|_{\rho-3\de}&\leq C\ga^{-4}\de^{-6-4\tau}E_0^2\\
 \|E_v^1\|_{\rho-4\de}&\leq C\ga^{-6}\de^{-9-6\tau}E_0^2\,,
\end{align*}
so the scheme is indeed quadratic. In particular,
\[
E_1:=\|E_H^1\|_{\rho-3\de}+|E_v^1\|_{\rho-4\de}\leq C\ga^{-6}\de^{-9-6\tau}E_0^2\,.
\]
so the new error $E_1$ is smaller than~$\ep_0$ because of the smallness condition~\eqref{eqE2}.

It is now standard that the scheme can therefore be iterated to yield a unique solution
$(H,v,a)\in \dot\cH_{\rho'}\times \dot\cH_{\rho'} \times \RR^2$ to
Equations~\eqref{eqTcH} and~\eqref{eqred2} that is bounded as in
Equation~\eqref{eqbound}, with $\rho':=\rho-8\de$. This is an easy
consequence of the previous estimates and the following well known lemma:

\begin{lemma}\label{lem:conv2}
Let $\{E_n\}_{n=0}^\infty$ be a sequence of positive real numbers such that
\[
E_{n+1}\leq C\ga^{-a}\de_n^{-b-a\tau}E_n^2
\]
for some constant $C>0$, positive reals $a,b$, and small constant
$\de_0$, with $\de_{n+1}:={\de_n}/{2}$. Then $E_n\to0$ as
$n\to\infty$ provided that
${E_0}\ll {\gamma^a \de^{b+a\tau}} $.
\end{lemma}
The result is then proven.
\end{proof}

\section{A KAM theorem adapted to divergence-free vector fields}\label{sec:teo}

In this section we prove a KAM theorem that applies to divergence-free
vector fields in $\RR^3$ that are not necessarily close to integrable
ones or in Birkhoff normal form. Another key technical advantage of
this result is that the twist condition is written in terms of the
(approximately) invariant torus and the vector field itself, so one
does not need any fine information about the trajectories of the
vector field. The proof follows the parametrization method as presented in~\cite{LGJV} in the context of Hamiltonian systems.

It is convenient to study the reducibility properties of the invariant
tori using embeddings of $\TT^2\to\RR^3$, denoting by~$\vp$ the
natural coordinates on the torus. As before, in
this section $D$ denotes the Jacobian matrix of a vector-valued
function and $|\cdot|$ stands for its norm computed with the
induced metric.

\subsection{Statement of results}

Let $B$ be an analytic divergence-free vector field in $\RR^3$ and let
$\omega \in \RR^2$ be a frequency vector satisfying the Diophantine
condition~\eqref{eq:diof}. Let $K:\TT^2 \to\RR^3$ be an analytic
embedding, which parametrizes a torus $\cT:=K(\TT^2)\subset\RR^3$
that we will eventually assume to be approximately invariant under $B$
with frequency $\omega$. The associated error, or defect, of invariance is measured using the function
\begin{equation}\label{eq:error}
E(\vp):=L_\omega K(\vp) -B(K(\vp))\,,
\end{equation}
where, as before, $L_\omega K(\vp):= DK(\vp) \omega$ is the derivative of~$K$ in
the direction of~$\om$.
We still denote by
\begin{equation}
G_K(\vp):=DK(\vp)^\top DK(\vp)
\end{equation}
the matrix representation of the pull-back to $\TT^2$ of the Euclidean
metric. It is obviously a nonsingular $2\times2$ matrix because $K$ is
an embedding. As $K$ and~$B$ are analytic, they can be analytically
continued to a complex strip that we call $\Delta(\rho)$ of the form~\eqref{Derho}.

An important ingredient of a KAM theorem is its twist condition, which
in this case we define in terms of the embedding $K$ and the vector
field $B$ as follows:

\begin{definition}\label{def:ndeg}
The embedding $K:\TT^2\to\RR^3$ is {\em twist}\/ for the vector field
$B$ if the twist constant
\begin{equation*}
T:=[A_1]_{\TT^2}\om_2-[A_2]_{\TT^2}\om_1\equiv[A]_{\TT^2}\cdot \omega^\perp
\end{equation*}
is nonzero. Here the $2\times 1$ matrix $A$ is defined as
\begin{equation}\label{eq:condA}
A(\vp) :=
 - \frac{G_K(\vp)^{-1}
DK(\vp)^\top [DB(K(\vp))^\top + DB(K(\vp))]\, n(\vp)}{|n(\vp)|^2}
\end{equation}
and $n$ is the vector field normal to the torus $\cT$ given by
\begin{equation}\label{eq:n}
n(\vp):=\partial_{\vp_1}K(\vp) \times \partial_{\vp_2}K(\vp)\,.
\end{equation}
\end{definition}

The main result of this section, which is of interest in itself, is
the following KAM theorem. We recall that two tori are said to be {\em
  $\eta$-close}\/ if
there is a diffeomorphism $\Psi$ of~$\RR^3$ mapping one into the other
and such that $\|\Psi-\id\|_{C^r}<\eta$, where $r\geq4$ is any fixed
integer.

\begin{theorem}\label{teo:kam:div}
Let $B$, $\omega$, $\rho$ and $K$ be as above. Assume that the embedding $K$ is twist with respect to the vector field $B$.
If the invariance error $\|E\|_\rho$ is small enough, there is a
constant $\lambda_*$ and an analytic Diophantine invariant torus $\cT_*$ of~$B$
with frequency vector $\omega_* :=
(1+\lambda_*)\,\omega$. Furthermore, $|\la_*|\leq C \|E\|_\rho$ and
the torus $\cT_*$ is $(C \|E\|_\rho)$-close to~$\cT:=K(\TT^2)$.
\end{theorem}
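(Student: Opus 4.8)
The plan is to set up a KAM iteration using the parametrization method, working with embeddings $K:\TT^2\to\RR^3$ rather than Poincar\'e maps, following the philosophy of~\cite{LGJV}. Given an approximately invariant embedding $K$ with invariance error $E=L_\omega K - B\circ K$, I would look for a correction of the form $K_+ = K + DK\,\zeta + \frac{n}{|n|^2}w + (\text{shift in }\omega)$, where $\zeta:\TT^2\to\RR^2$ is a tangential correction, $w:\TT^2\to\RR$ is a normal correction, and the frequency is adjusted by a small scalar multiple $\omega \mapsto (1+\lambda_+)\omega$. The key geometric point, exploited in the definition of the twist constant, is that the Jacobian $DB$ on $\cT$ splits into a part that preserves the (approximate) tangent bundle and a normal piece; because $B$ is divergence-free, the linearized flow preserves volume, which forces the normal-normal block of the linearization to be controlled by minus the trace of the tangential block, and this is precisely the mechanism that lets the single scalar $\lambda_+$ absorb the obstruction to solving the linearized equation.

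The main steps would be: (1) derive the linearized invariance equation $L_\omega(\delta K) - DB(K)\,\delta K = -E$, and project it onto the frame $\{\partial_{\vp_1}K,\partial_{\vp_2}K, n\}$; (2) show that in this frame the equation becomes, up to quadratically small terms, a pair of cohomological equations for $\zeta$ and $w$ of the form $L_\omega w = (\text{known}) + \lambda_+(\text{twist term})$ and $L_\omega \zeta = (\text{known involving }w)$, where solvability of the $w$-equation requires killing its average, which is exactly where the twist condition $T = [A]_{\TT^2}\cdot\omega^\perp \neq 0$ enters to fix $\lambda_+$ uniquely with $|\lambda_+| \le C\|E\|_\rho$; (3) solve the cohomological equations using R\"ussmann estimates~\cite{Rus} on shrinking analyticity strips $\Delta(\rho-\delta)$, losing powers of $\delta$ and $\gamma^{-1}$ at each step, exactly as in Lemma~\ref{L:Newton}; (4) estimate the new error $E_+$ and show it is quadratic, $\|E_+\|_{\rho-\delta} \le C\gamma^{-a}\delta^{-b}\|E\|_\rho^2$; (5) iterate with $\delta_n = \delta_0/2^n$ and invoke a convergence lemma of the type of Lemma~\ref{lem:conv2} to produce the limit embedding $K_*$, analytic on $\Delta(\rho')$ for some $\rho'>0$, parametrizing a genuine invariant torus $\cT_*$ with frequency $\omega_* = (1+\lambda_*)\omega$ where $\lambda_* = \sum \lambda_n$ satisfies $|\lambda_*| \le C\|E\|_\rho$; (6) track the cumulative correction to see that $K_*$ is $C\|E\|_\rho$-close to $K$ in $C^r$, hence $\cT_*$ is $(C\|E\|_\rho)$-close to $\cT$, and verify $\cT_*$ is Diophantine since $\omega_*$ is a scalar multiple of the Diophantine vector $\omega$.

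I expect the main obstacle to be step (2): carefully identifying, within the linearized equation, which terms are genuinely quadratic (hence can be discarded and estimated later) versus which must be retained, and checking that after the frame decomposition the \emph{only} obstruction to solvability is a single scalar average that the one-parameter frequency adjustment can kill. This is where the divergence-free hypothesis must be used in an essential way — specifically, the identity $\Div B = 0$ should translate, after pulling back to the torus, into a relation among the entries of $DK^\top DB(K)$ and $L_\omega n$ (a counterpart of Lemma~\ref{lem:trace}, which already appears in the twist computation of Section~\ref{S.pt}) that makes the average of the normal cohomological equation depend \emph{affinely and nontrivially} on $\lambda_+$, with slope exactly the twist constant $T$. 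A secondary technical nuisance is bookkeeping the analyticity-loss budget so that the geometric sum $\sum\delta_n$ stays below $\rho$, but this is routine once the quadratic estimate in step (4) is in hand. The fact that the formulas simplify when $B$ is Beltrami (as noted after the theorem) is a convenient sanity check but plays no role in the general proof.
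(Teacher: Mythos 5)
Your overall architecture coincides with the paper's: parametrization method, Newton correction $\Delta_K = DK\,\xi_1 + \frac{n}{|n|^2}\xi_2$ in the adapted frame, triangularization of the linearized operator $\cR$ via a trace identity, R\"ussmann estimates, and a standard quadratic iteration. However, the step you yourself single out as the crux --- step (2), the identification of the solvability obstructions --- is described incorrectly, and as stated it would not close. In the paper the \emph{normal} cohomological equation is simply $L_\om \xi_2 = -n^\top E$, with no $\la_+$ and no twist term: its compatibility condition $[n^\top E]_{\TT^2}=0$ holds \emph{automatically}, because $\int_{\TT^2} n^\top (B\circ K)\,d\vp = \int_\Om \Div B\,dx = 0$ by the divergence theorem. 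That is one of the two places where $\Div B=0$ enters; the other is Lemma~\ref{lem:trace} ($L_\om n = -DB(K)^\top n$, from the traceless-matrix identity for cross products), which is what produces the triangular structure $\cR(n/|n|^2)=DK\,A$ modulo error and hence the coupling term $A\,\xi_2$ in the \emph{tangential} equation. Your claim that $\Div B=0$ makes ``the average of the normal cohomological equation depend affinely and nontrivially on $\lambda_+$, with slope exactly the twist constant $T$'' is therefore not the mechanism at work.

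The real obstruction sits in the tangential equation $L_\om\xi_1 + A\,\xi_2 = -G_K^{-1}DK^\top E - \de_\om$, whose compatibility condition is a \emph{two-component} vector average, not a single scalar. A one-parameter frequency shift alone cannot kill it. The paper kills it with \emph{two} free scalars: the shift $\de_\om=\La\om$ along $\om$ \emph{and} the undetermined average $[\xi_2]_{\TT^2}$ of the normal correction (the normal equation only fixes $\xi_2$ up to an additive constant). The resulting $2\times2$ linear system has coefficient matrix with columns $[A]_{\TT^2}$ and $\om$, and its determinant is exactly the twist constant $T=[A]_{\TT^2}\cdot\om^\perp$; this is where the twist hypothesis is used, and why it has the specific form it does. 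Your proposal omits the free parameter $[\xi_2]_{\TT^2}$ entirely and attaches the twist condition to the wrong equation, so the ``single scalar average killed by $\lambda_+$'' picture must be replaced by this two-parameter, two-obstruction linear algebra before the iteration can be set up. The remaining steps (3)--(6) are then routine and match the paper.
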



The following corollary, which is a standard consequence of
Theorem~\ref{teo:kam:div}, is the result we employed in the proof of Theorem~\ref{T:main}.

\begin{corollary}\label{Cor_KAM}
Assume that the (analytic, divergence-free) vector field $B$ has an
invariant torus $\cT$ with Diophantine frequency vector
$\om$. If $\cT$ is twist (in the sense of Definition~\ref{def:ndeg}),
then it is accumulated (in both connected components of the complement
$\RR^3\backslash\cT$) by a set of Diophantine analytic
invariant tori of~$B$.
\end{corollary}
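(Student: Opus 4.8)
The plan is to deduce the accumulation property from Theorem~\ref{teo:kam:div} by applying it to a one-parameter family of approximately invariant tori obtained by displacing $\cT$ transversally. Fix an analytic nowhere-vanishing vector field $\nu$ defined along $\cT$ and transverse to it --- for instance the normal $n$ of~\eqref{eq:n}, which never vanishes because $K$ is an embedding --- and fix its orientation so that it points into one component $U^+$ of $\RR^3\backslash\cT$ (so $-\nu$ points into the other component $U^-$). For $|s|$ small, set $K_s(\vp):=K(\vp)+s\,\nu(\vp)$, so that $K_0=K$, each $K_s$ is an analytic embedding which extends to the complex strip $\Delta(\rho)$, and $\cT_s:=K_s(\TT^2)$ lies in $U^+$ for $s>0$ and in $U^-$ for $s<0$. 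The defect of invariance $E_s(\vp):=L_\om K_s(\vp)-B(K_s(\vp))$, cf.~\eqref{eq:error}, is analytic in $(s,\vp)$ and vanishes identically at $s=0$ since $\cT$ is invariant with frequency $\om$; hence $\|E_s\|_\rho\leq C|s|$. Moreover the twist constant of $K_s$ for $B$, assembled as in Definition~\ref{def:ndeg} from $G_{K_s}$, $DK_s$, $DB\circ K_s$ and $n_s:=\pd_{\vp_1}K_s\times\pd_{\vp_2}K_s$, depends continuously on $s$ and equals $T\neq0$ at $s=0$, so $K_s$ is twist whenever $|s|<s_0$ for some $s_0>0$.

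Shrinking $s_0$ if necessary so that $\|E_s\|_\rho$ falls below the threshold of Theorem~\ref{teo:kam:div}, we obtain for every $s$ with $0<|s|<s_0$ an analytic invariant torus $\cT_{*,s}=K_{*,s}(\TT^2)$ of $B$ with frequency vector $\om_{*,s}=(1+\la_{*,s})\om$, satisfying $|\la_{*,s}|\leq C\|E_s\|_\rho\leq C|s|$ and $\|K_{*,s}-K_s\|_{C^r}\leq C\|E_s\|_\rho$. Since $\om$ satisfies~\eqref{eq:diof} and $|1+\la_{*,s}|\geq 1/2$, the vector $\om_{*,s}$ is again Diophantine, so each $\cT_{*,s}$ is a minimal invariant torus; consequently any two of these tori, as well as $\cT_{*,s}$ and $\cT$, are either disjoint or equal (a common point would have its $B$-orbit dense in each of the two minimal tori, forcing them to coincide). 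Also $\cT_{*,s}\neq\cT$ when $\la_{*,s}\neq0$, because the linear flows $\om$ and $(1+\la_{*,s})\om$ cannot be conjugate on one torus: the rotation vector of a minimal $2$-torus is well defined modulo the $\mathrm{GL}(2,\ZZ)$-action, and $(1+\la)\om$ lies in that orbit only if $1+\la$ is a real eigenvalue of an integer matrix of determinant $\pm1$, hence is $\pm1$ or bounded away from $1$. Finally $\|K_{*,s}-K\|_{C^r}\leq\|K_{*,s}-K_s\|_{C^r}+|s|\,\|\nu\|_{C^r}\to0$, so $\cT_{*,s}\to\cT$ as $s\to0$.

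The one genuinely delicate point --- and the step I expect to require the most care --- is to show that these tori accumulate $\cT$ from \emph{both} components $U^{\pm}$, i.e.\ that $\la_{*,s}\neq0$ for small $s\neq0$ and that $\cT_{*,s}$ falls in $U^+$ for $s>0$ and in $U^-$ for $s<0$. This is where the twist hypothesis enters a second time: following the dependence on the initial torus in the parametrization-method construction behind Theorem~\ref{teo:kam:div}, one verifies that the counterterm depends differentiably on that torus and that $\tfrac{d}{ds}\la_{*,s}\big|_{s=0}$ equals, up to a nonzero normalization, the twist constant $T$; hence $s\mapsto\la_{*,s}$ is a local diffeomorphism near $0$, the frequencies $\om_{*,s}$ sweep an entire segment of distinct Diophantine vectors through $\om$, and the tori $\cT_{*,s}$ form a continuous injective family through $\cT$ (injectivity across different $s$ now following from the distinctness of the frequencies together with the minimality argument above). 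By invariance of domain the map $(\vp,s)\mapsto K_{*,s}(\vp)$ is then open, so its image is an open neighborhood of $\cT=K(\TT^2)$; since such a neighborhood meets both $U^+$ and $U^-$ in points arbitrarily close to every point of $\cT$, and each $\cT_{*,s}$ with $s\neq0$ is connected and contained in exactly one of $U^{\pm}$, the corollary follows. The reason this last step is nontrivial is that the KAM correction $\|K_{*,s}-K_s\|$ is a priori only of the same order $O(|s|)$ as the displacement $\|K_s-K\|$ itself, so the side of $\cT_{*,s}$ cannot be read off from soft estimates and one is forced to use the quantitative role of the twist constant.
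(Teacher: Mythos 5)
Your overall strategy---feed Theorem~\ref{teo:kam:div} an approximately invariant input and harvest the resulting tori---is sound, but the particular perturbation you choose (displace the embedding, keep the frequency $\om$) fails, and it fails exactly at the step you yourself flag as delicate. The claim that $\tfrac{d}{ds}\la_{*,s}\big|_{s=0}$ is a nonzero multiple of the twist constant $T$ is false: it vanishes. To see this, write $\nu=DK\,u+\tfrac{n}{|n|^2}\,v$ in the adapted frame of Section~\ref{sec:inv}. Since $\cT$ is exactly invariant, $E_s=s\,\cR(\nu)+O(s^2)=s\big(DK\,(L_\om u+A v)+\tfrac{n}{|n|^2}L_\om v\big)+O(s^2)$ by $\cR(DK)=0$ and Equation~\eqref{eq:Rn1}. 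Hence $n^\top E_s=s\,L_\om v+O(s^2)$, the zero-mean solution of~\eqref{eq:xi2} is $\tilde\xi_2=-s(v-[v]_{\TT^2})+O(s^2)$, and $[G_K^{-1}DK^\top E_s]_{\TT^2}=s[Av]_{\TT^2}+O(s^2)$, so the averaged solvability condition~\eqref{eq:aver} becomes $[A]_{\TT^2}\big([\xi_2]_{\TT^2}+s[v]_{\TT^2}\big)+\La\,\om=O(s^2)$. Since the twist condition says precisely that $[A]_{\TT^2}$ and $\om$ are linearly independent, the unique solution is $[\xi_2]_{\TT^2}=-s[v]_{\TT^2}+O(s^2)$ and $\La=O(s^2)$: the first-order error created by displacing the torus is absorbed entirely by moving the torus back in the normal direction, not by changing the frequency. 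This is not an accident of the first Newton step. Theorem~\ref{teo:kam:div} only searches for tori with frequency \emph{parallel} to $\om$, and the twist condition is exactly the statement that nearby invariant tori have frequency directions tilted away from $\om$ in proportion to their transversal displacement; so the scheme applied to $(B,\om,K_s)$ converges back to $\cT$ itself (or at best to a torus you cannot show is distinct from $\cT$, nor place on a prescribed side). Your family $\cT_{*,s}$ collapses, and both the injectivity argument and the ``both components'' argument evaporate with it.

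The paper performs the opposite perturbation: it keeps the exact embedding $K$ and replaces $\om$ by a nearby Diophantine vector $\om'$ with the same constants $(\ga,\tau)$ but a different ratio $\om_2'/\om_1'$. The invariance error is then $DK(\om'-\om)$, as small as desired, and the output torus has frequency $(1+\la')\om'$, whose rotation number differs from $\om_2/\om_1$, so it is genuinely a new torus; the twist condition determines on which side of $\cT$ it lies according to the sign of $\om_2'/\om_1'-\om_2/\om_1$, and the fact that such $\om'$ have positive measure on either side of $\om$ yields accumulation in both components of $\RR^3\backslash\cT$. If you want to rescue your version you would have to let the target frequency direction vary with $s$, which is essentially the paper's argument in disguise.
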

\begin{proof}
Simply apply Theorem~\ref{teo:kam:div} to the triple $(B,\om',K)$,
with a Diophantine frequency vector $\om'\neq \om$ which is very close
to $\om$ and has the same Diophantine constants $(\ga,\tau)$. This
ensures the invariance error is as small as one wishes. The accumulation property of the statement follows from the fact that the set of Diophantine numbers with fixed $(\ga,\tau)$ has positive measure in any neighborhood of~$\om$. The invariant torus $\cT'$ with frequency vector $(1+\la')\om'$ that one obtains with Theorem~\ref{teo:kam:div}, lies in the exterior component of $\RR^3\backslash\cT$ or in the interior one depending on whether $\frac{\om'_2}{\om'_1}$ is smaller or bigger than $\frac{\om_2}{\om_1}$ (because of the twist condition).
\end{proof}

To prove Theorem~\ref{teo:kam:div} we will iteratively correct
the embedding and the frequency vector by means of the Newton
method. Denoting the corrected quantities by
\begin{align*}
  \bar K(\vp)&:=K(\vp)+\Delta_K(\vp)\,,\\
  \bar \omega&:=\omega+\delta_\omega\,,
\end{align*}
one is led to choosing $\Delta_K(\vp)$ as a solution of the linearized equation
\begin{equation}\label{eq:cR}
\cR(\Delta_K(\vp)) := L_\omega \Delta_K(\vp) - DB(K(\vp)) \Delta_K(\vp) = - E(\vp)-DK(\vp) \delta_\omega.
\end{equation}

Our next goal is to analyze this equation, which will involve
developing a geometric setting in which the analytic properties of the equation
are laid bare. The most efficient way to do this is by first
considering the (trivial) case where $\cT$ is an actual invariant
torus of~$B$. This case will be considered in the next
subsection. Subsequently, in Subsection~\ref{sec:app:inv} we will
refine this approach to deal with the approximately invariant case,
which will enable us to prove Theorem~\ref{teo:kam:div}.

\subsection{Geometric study of the invariant case}\label{sec:inv}

In this subsection we shall assume that $\calT$ is an invariant torus
of the (divergence-free) vector field $B$ with frequency vector
$\omega \in \RR^2$, which we parametrize by the map $K: \TT^2 \to \RR^3$. The invariance equation reads as
\begin{equation}\label{eq:inv1}
L_\omega K(\vp) = B(K(\vp))\,.
\end{equation}

The columns of the matrix $DK$ are obviously a basis of the tangent
space $T_{K(\vp)} \calT$. With $n(\vp)$ being the normal
vector~\eqref{eq:n}, the key geometric observation is that the frame
\begin{equation}\label{eq:basis1}
\left(DK(\vp), \frac{n(\vp)}{|n(\vp)|^2} \right)
\end{equation}
greatly simplifies the analysis of the operator~$\cR$.
More precisely, in this frame~\eqref{eq:basis1}, the linear
operator~$\cR$ has a triangular structure that reduces the study of
Equation~\eqref{eq:cR} to that of two cohomological equations
with constant coefficients.

To prove this, we start by computing the action of the operator $\cR$ on the frame~\eqref{eq:basis1}.
Firstly, by taking derivatives of both sides of Equation~\eqref{eq:inv1}, we obtain that
\[
L_\omega DK(\vp) = DB(K(\vp)) DK(\vp)\,,
\]
which implies that $\cR(DK)=0$. To compute $\cR (\frac{n(\vp)}{|n(\vp)|^2})$, we use the following lemma:

\begin{lemma}\label{lem:trace}
If Equation~\eqref{eq:inv1} is satisfied,
\[
L_\omega n(\vp)=-DB(K(\vp))^\top n(\vp).
\]
\end{lemma}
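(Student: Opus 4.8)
The plan is to differentiate the invariance equation, apply the Leibniz rule for the cross product, and then reduce everything to a linear-algebra identity that is killed, in part, by the divergence-free hypothesis.

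First I would differentiate Equation~\eqref{eq:inv1} with respect to~$\vp_i$, $i=1,2$. Since $L_\omega$ commutes with $\partial_{\vp_i}$ and the chain rule applies on the right-hand side, this yields
\[
L_\omega\big(\partial_{\vp_i}K(\vp)\big)=DB(K(\vp))\,\partial_{\vp_i}K(\vp)\,,\qquad i=1,2\,,
\]
which is just the identity $L_\omega DK=DB(K)\,DK$ already recorded before the lemma. Next, because $L_\omega=\om_1\partial_{\vp_1}+\om_2\partial_{\vp_2}$ is a first-order operator, it obeys the product rule with respect to the (bilinear) cross product, so with the shorthand $M:=DB(K(\vp))$ one gets
\[
L_\omega n=L_\omega\big(\partial_{\vp_1}K\times\partial_{\vp_2}K\big)=\big(M\,\partial_{\vp_1}K\big)\times\partial_{\vp_2}K+\partial_{\vp_1}K\times\big(M\,\partial_{\vp_2}K\big)\,.
\]

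The crux of the argument is then the purely algebraic identity
\[
(Mu)\times v+u\times(Mv)=(\tr M)\,(u\times v)-M^\top(u\times v)\,,
\]
valid for every $3\times3$ matrix~$M$ and all $u,v\in\RR^3$. This is standard (the left-hand side is a derivation acting on $u\times v$; equivalently it follows from the adjugate expansion), and it is most easily verified by linearity in~$M$, checking it on the rank-one matrices $M=ab^\top$. Applying it with $u=\partial_{\vp_1}K$ and $v=\partial_{\vp_2}K$, so that $u\times v=n(\vp)$, gives
\[
L_\omega n=\big(\tr DB(K(\vp))\big)\,n-DB(K(\vp))^\top n\,.
\]
Finally, $\tr DB=\Div B=0$ by hypothesis, so the first term vanishes and one is left with $L_\omega n=-DB(K(\vp))^\top n$, as claimed.

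The computation is essentially routine; the only step warranting care is the linear-algebra identity above, and in particular the observation that the trace term it produces is precisely what the divergence-free condition removes. This is the structural reason why the normal direction is ``dual'' to the tangential dynamics along~$\calT$, and it is exactly what will make the operator~$\cR$ triangular in the frame~\eqref{eq:basis1}.
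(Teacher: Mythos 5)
Your proof is correct and follows essentially the same route as the paper: differentiate the invariance equation, apply the Leibniz rule to the cross product, and invoke the algebraic identity $(Uv_1)\times v_2+v_1\times(Uv_2)+U^\top(v_1\times v_2)=(\tr U)\,(v_1\times v_2)$, with the trace term killed by $\Div B=0$. The only cosmetic difference is that the paper states the identity directly for traceless matrices, whereas you state the general form and then discard the trace term.
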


\begin{proof}
A direct computation yields
\begin{align*}
L_\omega n(\vp) = {} & L_\omega (\partial_{\vp_1} K(\vp)) \times \partial_{\vp_2} K(\vp)
             + \partial_{\vp_1} K(\vp) \times L_\omega (\partial_{\vp_2} K(\vp)) \\
           = {} &
             (DB(K(\vp)) \partial_{\vp_1} K(\vp) \times \partial_{\vp_2} K(\vp)
             + \partial_{\vp_1} K(\vp) \times (DB(K(\vp)) \partial_{\vp_2} K(\vp)\,.
\end{align*}
Then the elementary identity
\[
(U v_1) \times v_2 + v_1 \times (U v_2) + U^\top (v_1 \times v_2) = 0\,,
\]
holds if $U$ is a $3\times 3$ traceless matrix and $v_1$, $v_2$ are
$3\times 1$ vectors. If we take $U$ as $DB(K(\vp))$, and note that
$DB$ has zero trace because $B$ is divergence-free, the lemma then follows.
\end{proof}

We are ready to compute the action of the linear operator $\cR$ on the
normal vector:
\begin{align}
\cR\left( \frac{n(\vp)}{|n(\vp)|^2} \right)
&=  L_\omega \left( \frac{n(\vp)}{|n(\vp)|^2} \right)
       - \frac{DB(K(\vp)) n(\vp)}{|n(\vp)|^2}
	 \nonumber \\
&= \frac{L_\omega n(\vp)}{|n(\vp)|^2}
       + n(\vp) L_\omega [ (|n(\vp)|^2)^{-1} ]
       - \frac{DB(K(\vp)) n(\vp)}{|n(\vp)|^2}
	 \nonumber\\
&= \frac{L_\omega n(\vp)-DB(K(\vp))n(\vp)}{|n(\vp)|^2}
       - \frac{n(\vp) (L_\omega n(\vp)^\top
	 n(\vp)+n(\vp)^\top L_\omega n(\vp))}{|n(\vp)|^4}\nonumber\\
& = -\frac{[DB(K(\vp))^\top+DB(K(\vp))]\,n(\vp)}{|n(\vp)|^2}\nonumber\\
&\qquad\qquad+\frac{n(\vp)^\top[DB(K(\vp))^\top+DB(K(\vp))]\,n(\vp)}{|n(\vp)|^4}\,n(\vp)\,. \label{eq:Rn_exp}
\end{align}
Here we have used Lemma~\ref{lem:trace} to pass to the last line. It is straightforward to check that this
expression can be written in the frame~\eqref{eq:basis1} as
\begin{equation}\label{eq:Rn1}
\cR\left(\frac{n(\vp)}{|n(\vp)|^2} \right) = DK(\vp) A(\vp) \,,
\end{equation}
where the $2 \times 1$ matrix $A(\vp)$ is
\begin{equation}\label{eq:twist}
A(\vp) = - \frac{G_K(\vp)^{-1}
DK(\vp)^\top [DB(K(\vp))^\top + DB(K(\vp))]\, n(\vp)}{|n(\vp)|^2}\,.
\end{equation}
Observe $A(\vp)$ is therefore just as in Equation~\eqref{eq:condA}.

\subsection{Proof of Theorem~\ref{teo:kam:div}}\label{sec:app:inv}

Our goal now is to use the frame introduced in the previous section to
analyze Equation~\eqref{eq:cR}, which determines the small corrections
$\Delta_K$ and $\delta_\omega$. The problem ultimately boils down to studying the
inverse of the operator $\cR$.

Taking derivatives in Equation~\eqref{eq:error}, we obtain
\begin{equation}\label{eq:aprox}
L_\omega DK(\vp)=DB(K(\vp))DK(\vp)+DE(\vp)\,.
\end{equation}
Arguing as in the proof of Lemma~\ref{lem:trace} but using this
equation instead of~\eqref{eq:inv1}, we prove the following:

\begin{lemma}\label{L:aprox}
The normal vector $n(\vp)$ satisfies the equation
\[
L_\omega n(\vp)=-DB(K(\vp))^\top n(\vp)
  + \partial_{\vp_1} E(\vp) \times \partial_{\vp_2} K(\vp)
  + \partial_{\vp_1} K(\vp) \times \partial_{\vp_2} E(\vp)\,.
\]
\end{lemma}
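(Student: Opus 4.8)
The plan is to repeat the computation in the proof of Lemma~\ref{lem:trace} almost verbatim, the only change being that the exact invariance relation~\eqref{eq:inv1} is now replaced by the approximate one~\eqref{eq:aprox}, so that an extra term involving $DE$ survives. First I would differentiate the defining formula $n(\vp)=\partial_{\vp_1}K(\vp)\times\partial_{\vp_2}K(\vp)$ in the direction $\om$, using that $L_\om$ has constant coefficients and hence commutes with the partial derivatives $\partial_{\vp_j}$. This gives
\[
L_\om n(\vp)=\big(\partial_{\vp_1}L_\om K(\vp)\big)\times\partial_{\vp_2}K(\vp)+\partial_{\vp_1}K(\vp)\times\big(\partial_{\vp_2}L_\om K(\vp)\big)\,.
\]
Reading off the columns of Equation~\eqref{eq:aprox} yields $\partial_{\vp_j}L_\om K=DB(K)\,\partial_{\vp_j}K+\partial_{\vp_j}E$ for $j=1,2$, which I would substitute into the display above.

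Next I would split the resulting expression into the contribution coming from $DB(K)$ and the one coming from $E$. The latter is already exactly $\partial_{\vp_1}E\times\partial_{\vp_2}K+\partial_{\vp_1}K\times\partial_{\vp_2}E$, so no further manipulation is needed there. For the former, I would invoke the same elementary algebraic identity used in the proof of Lemma~\ref{lem:trace}: for any traceless $3\times3$ matrix $U$ and vectors $v_1,v_2\in\RR^3$,
\[
(Uv_1)\times v_2+v_1\times(Uv_2)+U^\top(v_1\times v_2)=0\,,
\]
now applied with $U=DB(K(\vp))$ (which is traceless because $\Div B=0$), $v_1=\partial_{\vp_1}K$ and $v_2=\partial_{\vp_2}K$. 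This collapses the two cross products into $-DB(K(\vp))^\top n(\vp)$, and collecting all the pieces gives the claimed identity.

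I do not anticipate any genuine obstacle here: the argument is a one-line perturbation of Lemma~\ref{lem:trace}. The only points that require care are the commutation $L_\om\partial_{\vp_j}=\partial_{\vp_j}L_\om$, valid since $\om$ is a constant vector, and keeping track of the placement of the transpose and the sign in the traceless-matrix identity; everything else is formal bookkeeping. This lemma will then feed, exactly as Lemma~\ref{lem:trace} does in the invariant case, into the computation of $\cR\big(n/|n|^2\big)$ in the approximately invariant setting needed to complete the proof of Theorem~\ref{teo:kam:div}.
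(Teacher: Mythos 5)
Your proposal is correct and is exactly the argument the paper intends: the text introduces Lemma~\ref{L:aprox} with the phrase ``Arguing as in the proof of Lemma~\ref{lem:trace} but using this equation instead of~\eqref{eq:inv1}'', which is precisely your computation of differentiating $n=\partial_{\vp_1}K\times\partial_{\vp_2}K$ along $\om$, substituting the columns of~\eqref{eq:aprox}, and applying the traceless-matrix cross-product identity to the $DB$ terms while the $E$ terms survive untouched.
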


This allows us to compute the quantity
$\cR (\frac{n(\vp)}{|n(\vp)|^2})$ as in~\eqref{eq:Rn_exp}:
\begin{equation}\label{eq:cR:normal}
\cR\left(\frac{n(\vp)}{|n(\vp)|^2} \right) = DK(\vp) (A(\vp)+A'(\vp)) + \frac{n(\vp)}{|n(\vp)|^2} b(\vp)\,,
\end{equation}
Here $A(\vp)$ is the vector~\eqref{eq:twist} and
\begin{align}\label{eq:B}
A'(\vp)&:=\frac{G_K(\vp)^{-1} DK(\vp)^\top(
\partial_{\vp_1} E(\vp) \times \partial_{\vp_2} K(\vp)
  + \partial_{\vp_1} K(\vp) \times \partial_{\vp_2}
        E(\vp))}{|n(\vp)|^2}\\
  \label{eq:b}
b(\vp) &:= -\frac{(\partial_{\vp_1} E(\vp) \times \partial_{\vp_2} K(\vp)
  + \partial_{\vp_1} K(\vp) \times \partial_{\vp_2}
  E(\vp))^\top n(\vp)}{|n(\vp)|^2}\,.
\end{align}

The following lemma shows how to get an
approximate solution of Equation~\eqref{eq:cR}, modulo a quadratic
error, using solutions to a pair of cohomological equations:

\begin{lemma}\label{lem:corr}
Suppose that the functions $\xi_1,\xi_2$ on~$\TT^2$ satisfy the cohomological equations
\begin{align}
L_\omega \xi_1(\vp) + A(\vp) \xi_2(\vp) = {} & -G_K(\vp)^{-1} DK(\vp)^\top E(\vp) -\delta_\omega\,, \label{eq:xi1}\\
L_\omega \xi_2(\vp) = {} & -n(\vp)^\top E(\vp)\,. \label{eq:xi2}
\end{align}
Then
\begin{equation}\label{eq:Delta}
\Delta_K(\vp) := DK(\vp) \xi_1(\vp)+\frac{n(\vp)}{|n(\vp)|^2} \,\xi_2(\vp)
\end{equation}
solves Equation~\eqref{eq:cR} modulo a quadratic error. More precisely,
\begin{equation}\label{eq:newerror}
\cR(\Delta_K(\vp))+E(\vp)+DK(\vp)\delta_\omega=DK(\vp) E_1(\vp)
+\frac{n(\vp)}{|n(\vp)|^2} E_2(\vp)
\end{equation}
with
\begin{align}
E_1(\vp) := {} & G_K(\vp)^{-1} DK(\vp)^\top DE(\vp) \xi_1(\vp) +A'(\vp) \xi_2(\vp)\,, \label{eq:E1}\\
E_2(\vp) := {} & n(\vp)^\top DE(\vp) \xi_1(\vp)+b(\vp) \xi_2(\vp)\,. \label{eq:E2}
\end{align}
\end{lemma}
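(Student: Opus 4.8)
The plan is to substitute the ansatz~\eqref{eq:Delta} into $\cR$ and expand by linearity, using the Leibniz rule for $L_\omega$ together with the two structural identities already at hand: first, $\cR(DK)=DE$, which is merely a restatement of~\eqref{eq:aprox}; and second, the expansion~\eqref{eq:cR:normal} for $\cR\bigl(n/|n|^2\bigr)$. Writing $\Delta_K=DK\,\xi_1+\frac{n}{|n|^2}\,\xi_2$ and applying $L_\omega(DK\,\xi_1)=(L_\omega DK)\,\xi_1+DK\,L_\omega\xi_1$ (and the analogous rule for the normal term), a short computation gives
\[
\cR(\Delta_K)=DK\bigl(L_\omega\xi_1+A\,\xi_2\bigr)+\frac{n}{|n|^2}\bigl(L_\omega\xi_2+b\,\xi_2\bigr)+DK\,A'\,\xi_2+DE\,\xi_1\,.
\]
This is precisely the point at which the cohomological equations~\eqref{eq:xi1}--\eqref{eq:xi2} enter: they are designed so that the first parenthesis equals $-G_K^{-1}DK^\top E-\delta_\omega$ and the second equals $-n^\top E$. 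This is also where the triangular structure of the scheme is exploited, since $\xi_2$ is solved from~\eqref{eq:xi2} alone and then substituted into~\eqref{eq:xi1}.

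Next I would invoke the elementary identity
\[
DK\,G_K^{-1}DK^\top+\frac{n\,n^\top}{|n|^2}=I\,,
\]
valid because the two summands are the orthogonal projectors of $\RR^3$ onto the tangent plane $\mathrm{range}(DK)$ and onto its orthogonal complement $\mathrm{span}(n)$. Substituting the cohomological equations and using this identity collapses $-DK\,G_K^{-1}DK^\top E-\frac{n\,n^\top}{|n|^2}E$ into $-E$, so that
\[
\cR(\Delta_K)+E+DK\,\delta_\omega=DE\,\xi_1+DK\,A'\,\xi_2+\frac{n}{|n|^2}\,b\,\xi_2\,.
\]
The same projector identity resolves the remaining $\RR^3$-valued term $DE\,\xi_1$ in the moving frame $\bigl(DK,\,n/|n|^2\bigr)$: its tangential coefficient is $G_K^{-1}DK^\top DE\,\xi_1$ and its normal coefficient is $n^\top DE\,\xi_1$. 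Collecting tangential against normal parts then yields exactly~\eqref{eq:newerror} with $E_1,E_2$ as in~\eqref{eq:E1}--\eqref{eq:E2}.

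Finally, the word \emph{quadratic} is justified a posteriori: once $\delta_\omega$ is fixed to be of size $\cO(\|E\|_\rho)$ and R\"ussmann's estimates are applied to~\eqref{eq:xi1}--\eqref{eq:xi2}, the functions $\xi_1,\xi_2$ are themselves $\cO(\|E\|_\rho)$ (with the usual loss of analyticity strip), while $DE$ is $\cO(\|E\|_\rho)$ by the Cauchy estimate and $A',b$ are $\cO(\|E\|_\rho)$ by their explicit formulas~\eqref{eq:B}--\eqref{eq:b}; hence every term of $E_1,E_2$ is a product of two such factors. I do not expect any genuine analytic obstacle here — the content of the lemma is the algebraic identity~\eqref{eq:newerror}, and the only step requiring care is the bookkeeping in the expansion of $\cR(\Delta_K)$, so that the triangular frame structure comes out cleanly and the decomposition of $DE\,\xi_1$ into tangential and normal components is done correctly; the quantitative estimates that make the Newton iteration converge are carried out afterwards in Subsection~\ref{sec:app:inv}.
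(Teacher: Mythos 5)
Your proposal is correct and follows essentially the same route as the paper's proof: expand $\cR(\Delta_K)$ in the adapted frame using $\cR(DK)=DE$ and the expansion~\eqref{eq:cR:normal}, then identify the tangential and normal coefficients after inserting the cohomological equations~\eqref{eq:xi1}--\eqref{eq:xi2}. The only cosmetic difference is that you make explicit the projector identity $DK\,G_K^{-1}DK^\top+n\,n^\top/|n|^2=I$, which the paper uses implicitly when reading off $E_1$ and $E_2$.
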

\begin{remark}
We observe that, at least formally, the quantities $E_1(\vp)$ and $E_2(\vp)$ are quadratic in $E(\vp)$. This is obvious except, perhaps, for the presence of the constant $\delta_\omega$ in Equation~\eqref{eq:xi1}. The fact that this constant is of order $[E]_{\TT^2}$ is a consequence of the solvability condition that the average of Equation~\eqref{eq:xi1} must be zero (see the discussion after Equation~\eqref{eq:aver}).
\end{remark}

\begin{proof}
First we compute
\begin{align*}
\cR(\Delta_K(\vp)) &=  \cR(DK(\vp)) \xi_1(\vp) + DK(\vp) L_\omega \xi_1 + \cR\left(\frac{n(\vp)}{|n(\vp)|^2} \right) \xi_2(\vp) + \frac{n(\vp) L_\omega \xi_2}{|n(\vp)|^2}\\[1mm]
&=
DE(\vp) \xi_1 (\vp)
+ DK(\vp) \big(L_\omega \xi_1(\vp) + (A(\vp)+A'(\vp))\xi_2(\vp)\big)\\
&\qquad\qquad\qquad \qquad\qquad\qquad\qquad\qquad+ \frac{n(\vp)(L_\omega \xi_2(\vp)+b(\vp) \xi_2(\vp))}{|n(\vp)|^2}\,.
\end{align*}
Next, we plug this expression in Equation~\eqref{eq:newerror} and
read off the errors $E_1(\vp)$ and $E_2(\vp)$:
\begin{align*}
E_1(\vp) &= G_K(\vp)^{-1}DK(\vp)^\top DE(\vp) \xi_1(\vp) + L_\omega \xi_1(\vp)+(A(\vp)+A'(\vp)) \xi_2  \\
&\qquad\qquad\qquad\qquad\qquad+ G_K(\vp)^{-1} DK(\vp)^\top E(\vp) +
                                                                                                           \delta_\omega\,,\\
  E_2(\vp) &= n(\vp)^\top DE(\vp) \xi_1(\vp) + b(\vp) \xi_2(\vp) + L_\omega \xi_2(\vp) + n(\vp)^\top E(\vp)\,.
\end{align*}
Then expressions~\eqref{eq:E1} and~\eqref{eq:E2} follow from Equations~\eqref{eq:xi1} and~\eqref{eq:xi2}.
\end{proof}

To solve the cohomological equations~\eqref{eq:xi1}
and~\eqref{eq:xi2}, we need to ensure that the RHS of these equations
have zero mean. In the case of Equation~\eqref{eq:xi2}, a simple
computation shows that
\begin{align*}
[n^\top E]_{\TT^2} &= {}
\frac{1}{4\pi^2} \int_{\TT^2} n(\vp)^\top E(\vp) \,d\vp\\
&=
\frac{1}{4\pi^2} \int_{\TT^2} n(\vp)^\top [L_\omega K(\vp)-B(K(\vp))] \,d\vp \\
&= {}
-\frac{1}{4\pi^2} \int_{\TT^2} n(\vp)^\top B(K(\vp)) \,d\vp\,,
\end{align*}
where we have used that $n(\vp)^\top L_\omega K(\vp)=0$. The last
integral can be equivalently written as
\begin{align*}
\int_{\TT^2} n(\vp)^\top B(K(\vp)) \,d\vp&= \int_{\cT} N^\top B\, dS =\int_{\Omega} \Div B\, dx= 0\,,
\end{align*}
where $dS:=|n|\,d\vp$ is the induced area form on $\cT$,
$N:=\frac{n}{|n|}$ is the unit normal and $\Omega$ is the domain in
$\RR^3$ bounded by $\cT$. To obtain the last equality, we have also
used that $B$ is divergence-free. We thus conclude that $[n^\top
E]_{\TT^2}  =0$, so there exists a
unique zero mean solution~$\tilde\xi_2$ to Equation~\eqref{eq:xi2}.
The general solution to this equation is
\[
\xi_2(\vp)= \tilde \xi_2(\vp)+[\xi_2]_{\TT^2}
\]
with any constant $[\xi_2]_{\TT^2}\in\RR$.

Plugging the expression for~$\xi_2$ in Equation~\eqref{eq:xi1}, we
obtain that the average of the RHS of this equation is
\begin{equation}\label{eq:aver}
[A \tilde \xi_2]_{\TT^2}
+
[A]_{\TT^2} [\xi_2]_{\TT^2}
+
[G_K ^{-1} DK^\top E]_{\TT^2}
+
\delta_\omega\,.
\end{equation}
We now make the additional assumption
that $\de_\om$ and $\om$ are collinear, so
\[
  \delta_\omega:=\Lambda \omega
\]
for some $\La\in\RR$. This is crucial to preserve the Diophantine properties of the frequency vector. The twist condition in Definition~\ref{def:ndeg}
then ensures that there is a unique pair $([\xi_2]_{\TT^2},
\La)\in\RR^2$ for which the quantity~\eqref{eq:aver} is~0. It is immediate to check that the solution $([\xi_2]_{\TT^2},
\La)$ is of order $[E]_{\mathbb T^2}$.

We thus obtain an expression for the corrected embedding $\bar
K(\vp):=K(\vp)+\Delta_K(\vp)$. The invariance error associated with
the corrected embedding can be easily computed:
\begin{align*}
\bar E(\vp) := {} & L_{\bar\omega} \bar K(\vp)-B(\bar K(\vp))
=  \Big(\cR(\Delta_K(\vp))+E(\vp) + DK(\vp)\delta_\omega\Big) \\
& +\Big( B(K(\vp))+DB(K(\vp))\Delta_K(\vp)-B(K(\vp)+\Delta_K(\vp))\Big)+O(E^2)\\\
= {} & \Big(DK(\vp) E_1(\vp)
+\frac{n(\vp)}{|n(\vp)|^2} E_2(\vp)\Big) + O(E^2)\,.
\end{align*}
Therefore, the new error is quadratic in $E(\vp)$ by
Lemma~\ref{lem:corr}.

In view of the above estimates, it is now well known that, if a certain
smallness assumption is satisfied, the associated quadratic
scheme starting with $(K_0,\La_0):=(K,1)\in
\dot\cH_\rho\times\RR^+$ converges to some
$(K_*,\La_*)\in\dot\cH_{\rho'}\times\RR^+$. The embedded torus
$\cT_*:=K_*(\TT^2)$ is then a Diophantine invariant torus of~$B$ with
frequency vector $\om_*:=\La_*\om$. The details are standard (see e.g.~\cite{LGJV}), and go
essentially as in Section~\ref{SS.prop}, so we will just
sketch the argument.

R\"ussmann estimates for the solutions $\xi_1,\xi_2$ of the
cohomological
equations~\eqref{eq:xi1}-\eqref{eq:xi2} ensure that, for any small $\de>0$,
\begin{align*}
\|\xi_2\|_{\rho-\delta} &\leq C\gamma^{-1} \delta^{-\tau} \|E\|_\rho\,,\\
\|\xi_1\|_{\rho-2\delta}& \leq C\gamma^{-2} \delta^{-2\tau} \|E\|_\rho\,.
\end{align*}
The Cauchy estimate then permits to estimate $B,b,E_1,E_2$ and $\bar E$; in particular,
$$\|\bar E\|_{\rho-2\de} \leq C \gamma^{-4} \delta^{-4\tau} \|E\|^2_\rho\,.$$
These are the estimates for each step of the Newton method, which goes
as follows:
\begin{enumerate}
  \item Initialize the scheme with $(K_0,\La_0):=(K,1)$.
\item Given $(K_n,\La_n)$, set $\om_n:=\La_n\om$ and compute the
  invariance error $E_n:=L_{\omega_n} K_n-B\circ K_n$.
\item Construct the adapted frame by computing $DK_n$ and $n_n$.
\item Compute the new $2\times 1$ matrix $A_n$ as in Equation~\eqref{eq:twist}.
\item Solve the cohomological equations~\eqref{eq:xi1}
  and~\eqref{eq:xi2}, thus obtaining $\xi_{1,n}$ and
  $\xi_{2,n}$. The new constant vector $\Lambda_{n+1} \omega_n$ is then
  obtained from Equation~\eqref{eq:aver}.
\item Compute $\Delta_{K_n}$ from Equation~\eqref{eq:Delta}
  and set $K_{n+1}:=K_n + \Delta_{K_n}$.
\item Repeat the iteration step with
  $\de_{n+1}:=\de_n/2$.
\end{enumerate}
Lemma~\ref{lem:conv2} ensures the
existence of an invariant torus of~$B$ given by
\[
  (K_*,\om_*):=\lim_{n\to\infty}(K_n,\La_n\om)\in \dot\cH_{\rho-4\de}\times\RR^+
\]
provided that
${\|E_0\|_{\rho}}\ll{\ga^4\delta^{4\tau}}$,
so the theorem follows.

\section{Thin toroidal domains are generically
  nondegenerate}\label{S:nondeg}

As discussed in the Introduction, it is reasonable to expect that a ``generic'' toroidal domain
satisfies both nondegeneracy conditions (cf. Definitions~\ref{D:torus} and~\ref{D:torusII}), but proving generic results for
vectorial problems is often extremely hard. Our objective in this
section is to prove an analog of this result in the class of thin toroidal
domains, where one can analyze the harmonic field (and other Beltrami
fields) in detail.

Given a closed smooth curve~$\ga:\TT\to\RR^3$, let us denote by
$\Om(\ga,\ep)$ the toroidal domain (or tube) of thickness~$\ep$
defined by this curve:
\begin{equation}\label{Tgaep}
\Om(\ga,\ep):=\{x\in\RR^3: \text{dist}(x,\ga(\TT))<\ep\}\,.
\end{equation}
The main result of this section can then be stated as follows. When we
say that the result holds for almost all small enough~$\ep$, it means
that there exists some $\ep_0>0$ and a subset $Z\subset[0,\ep_0]$ of
measure zero such that the result holds for all $\ep\in
(0,\ep_0]\backslash Z$.

\begin{proposition}\label{P:isot}
For a generic curve~$\ga$ (in the sense of a dense subset in
the $C^r$~topology, for any $r\geq3$), the toroidal domain
$\Om(\ga,\ep)$ is analytic and nondegenerate of type I and II for almost all small enough $\ep>0$.
\end{proposition}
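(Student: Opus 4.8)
The plan is to take, on the tube $\Om(\ga,\ep)$, a Beltrami field whose structure in the thin limit $\ep\to0$ can be written down explicitly in tubular coordinates, and then to verify the conditions of Definitions~\ref{D:torus} and~\ref{D:torusII} order by order in $\ep$. Since analytic curves are $C^r$-dense and, for $\ga$ analytic and $\ep$ below the normal injectivity radius of $\ga$, the surface $\pd\Om(\ga,\ep)=\{x:\dist(x,\ga(\TT))=\ep\}$ is real-analytic and any elliptic boundary problem on $\Om(\ga,\ep)$ depends real-analytically on $\ep$, it suffices to produce, for a $C^r$-dense set of analytic curves $\ga$, a full-measure set of small $\ep$ for which $\Om(\ga,\ep)$ is nondegenerate of both types. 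Fix such an analytic $\ga$ with curvature $\ka(s)$ and torsion $\tau(s)$, work in Fermi coordinates $(s,r,\te)\in\TT\times[0,\ep)\times\TT$ with $\ga=\{r=0\}$ and $\Om(\ga,\ep)=\{r<\ep\}$, and solve $\curl B^\ep=\la^\ep B^\ep$, $B^\ep\cdot N=0$ perturbatively in $\ep$; a convenient choice is the eigenvalue $\la^\ep=\mu/\ep$ for a fixed generic constant $\mu$, so that the rescaled cross-sectional profile converges to that of a straight-cylinder Beltrami field with parameter $\mu$ (the harmonic field $h_\ep$, formally $\mu=0$, also admits an explicit description, but its radial shear vanishes as $\ep\to0$, which would make the twist condition more delicate).

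The restriction $X_\ep:=B^\ep|_{\pd\Om(\ga,\ep)}$ is a nowhere-vanishing analytic vector field on $\TT^2$ whose dual $1$-form is closed (cf.\ Theorem~\ref{T:CK}), so $X_\ep$ has a global section and a well-defined rotation number $\iota(\ep)$, and $\pd\Om(\ga,\ep)$ is automatically an invariant torus of $B^\ep$. Each condition to be checked follows from the same scheme: write the relevant quantity as an analytic function of $\ep$, identify its first nontrivial Taylor coefficient as a functional of the jet of $\ga$ that is not identically zero --- so it is nonzero for a $C^r$-dense set of curves, it being enough to exhibit one curve (e.g.\ a small nonplanar perturbation of a round circle) where it does not vanish --- and conclude that the quantity vanishes only for a null set of $\ep$. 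For condition~(i) of Definitions~\ref{D:torus} and~\ref{D:torusII}: the expansion yields $\iota(\ep)=\iota_0(\ga)+(\mu\text{-dependent term})+O(\ep^k)$ with $\iota_0$ proportional to $\oint\tau\,ds$ at leading order, and shows that $\{X_\ep\}$ is a genuine twist family in $\ep$; since $X_\ep$ converges analytically to a linearizable model, $\iota$ depends regularly enough on $\ep$ that $\iota^{-1}$ preserves null sets (alternatively, one adjusts $\la^\ep$ within the one-parameter family of Beltrami fields on $\Om(\ga,\ep)$ to hit a prescribed Diophantine value at each small $\ep$), so $\iota(\ep)$ is Diophantine for a.e.\ small $\ep$, and for such $\ep$ Herman's theorem~\cite{Y,Yoccoz} gives analytic coordinates $\vp:\pd\Om(\ga,\ep)\to\TT^2$ linearizing $X_\ep$.

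It remains to verify the second condition of each definition. In the coordinates $\vp$ the induced metric $G$ on $\pd\Om(\ga,\ep)$ is strongly anisotropic ($O(1)$ along the core direction, $O(\ep)$ along the meridian), so $G^{-1}$ is of size $\ep^{-2}$ meridionally; computing $\mathcal R$ and the matrix $M$ of~\eqref{eq.gen} one gets $\det M\sim\ep^{-2}(\cdots)$ with the bracket a nontrivial functional of $\ga$, so $M$ is invertible for a.e.\ small $\ep$, which together with the previous point gives type~I. Likewise, using the adapted frame~\eqref{eq:basis1} one evaluates the twist constant of~\eqref{eq:condA} together with the extra term $\la^\ep[\al\cdot\om^\perp/|n|^2]_{\TT^2}$ of Definition~\ref{D:torusII}; this is analytic in $\ep$, with leading part governed by the (nonzero) radial shear of the $\mu/\ep$-profile, hence nonzero for a.e.\ small $\ep$, which gives type~II. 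Intersecting the three $C^r$-dense sets of curves and removing the three null sets of $\ep$ proves the proposition.

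The main obstacle is the asymptotic solution of the div--curl system in the curved thin tube: one must carry the Fermi-coordinate expansion far enough that the curvature and, above all, the torsion of $\ga$ first affect $\iota(\ep)$, $\det M$ and the twist constant, managing the $(s,r,\te)$-dependent metric coefficients, and then verify that the resulting leading coefficients are genuinely nonzero functionals of the curve --- the twist being the subtlest case, which is the reason for scaling $\la^\ep=\mu/\ep$ so that the limiting model is a straight-cylinder Beltrami field with $O(1)$ shear. A secondary point is the regularity needed to pass from ``$\iota$ varies with $\ep$'' to ``$\iota$ is Diophantine for a.e.\ $\ep$''; this is handled via the near-integrability of $X_\ep$ as $\ep\to0$ (or the freedom to move $\la^\ep$), and is where the analyticity of $\ga$ enters in an essential way.
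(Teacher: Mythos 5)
Your overall scheme (Fermi coordinates, expansion in $\ep$, genericity of $\ga$ via nonvanishing of a leading Taylor coefficient) is the right shape, but the central choice $\la^\ep=\mu/\ep$ is a genuine gap, and it points the construction in the opposite direction from the one that actually works. The paper's proof is built entirely on the near\emph{-harmonic} regime $\la_1=O(\ep^3)$: it invokes Theorem~\ref{P:tubes} (from \cite{Acta,ELP}) to get, for generic $\ga$ and a.e.\ small $\ep$, a Beltrami field whose boundary torus is Diophantine, and it invokes \cite[Theorem 7.8]{Acta} for the twist asymptotics $T=c_\ga\ep^2+O(\ep^3)$ with $c_\ga\neq0$ for generic curves. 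None of this is available at $\la\sim\mu/\ep$: that scaling sits at the scale of the first curl eigenvalues of the thin tube, so even existence/uniqueness of the Beltrami field uniformly in $\ep$ is problematic (the rescaled spectrum moves with $\ep$, so a single ``generic $\mu$'' does not avoid it for all small $\ep$), and the field is no longer a perturbation of the harmonic field, so the entire singular-perturbation analysis --- including the convergence to the straight-cylinder Bessel profile and the control of the boundary restriction --- would have to be redone from scratch. Your stated motivation for the large eigenvalue (that the harmonic field has vanishing radial shear, making the twist ``delicate'') is precisely the delicate computation that \cite{Acta} already carries out: the twist is small but nonzero, $T=c_\ga\ep^2+O(\ep^3)$, and the paper \emph{exploits} the smallness of $\la$ so that the extra type~II term satisfies $\la[\al\cdot\om^\perp/|n|^2]_{\TT^2}=O(\la/\ep^2)$, which is dominated by $c_\ga\ep^2$ once $|\la|<C|c_\ga|\ep^4$. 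In your regime that extra term is of size up to $\mu/\ep^3$ and you give no argument that it does not cancel against your $O(1)$ twist; ``analytic in $\ep$ with nonzero leading coefficient'' is only a proof once that coefficient is actually computed.

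Two further points. First, the step ``$\iota(\ep)$ is Diophantine for a.e.\ $\ep$'' is not automatic for an analytic family of circle diffeomorphisms: mode-locking intervals can occupy positive measure in parameter space, so one needs quantitative closeness to rigid rotations (which the near-harmonic regime supplies, the boundary field being $\pd_\al-\tau(\al)\pd_\theta+O(\ep)$, hence linear $+\,O(\ep)$ in the coordinates $\vp$) together with the freedom to vary the second parameter $\la$; this is exactly what Theorem~\ref{P:tubes} packages, and you acknowledge but do not resolve it. Second, for the type~I condition you invoke genericity where none is needed: since $\cR=O(\ep)$, the matrix in~\eqref{eq.gen} is $\int_{\TT^2}G_\ep^{-1}(I+O(\ep))\,d\vp$, and the anisotropy $G_\ep^{-1}\approx\operatorname{diag}(1,\ep^{-2})$ forces $\det M=\ep^{-2}(1+O(\ep))\neq0$ for \emph{all} small $\ep$ and every curve --- no ``nontrivial functional of $\ga$'' enters. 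In short: either you restrict to $\la=O(\ep^3)$ and cite the thin-tube results as the paper does, or you must supply a new spectral and KAM analysis of the curved tube at eigenvalue scale $1/\ep$, which is a substantial project, not a routine expansion.
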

\begin{proof}
Theorem~\ref{P:tubes} implies that, for a generic curve $\ga$, the
toroidal domain $\Om_1:=\Om(\ga,\ep)$ admits a Beltrami field $B$
satisfying $\curl B=\la_1B$, for some constant $\la_1=O(\ep^3)$, and $\pd\Om_1$
is an analytic Diophantine invariant torus of $B$. This immediately
establishes the property~(i) in Definitions~\ref{D:torus} and~\ref{D:torusII}.

Let us next prove condition~(ii) in Definition~\ref{D:torus}.
In the following computations we shall use the formulas and results obtained in~\cite{Acta} without further mention. Consider the coordinates $(\alpha,r,\theta):\Om_1\to \TT \times (0,1) \times \TT$ that parametrize $\Om_1$ (we are assuming, without any loss of generality, that the length of the core curve $\ga$ is $2\pi$); in particular, $\pd\Om_1$ corresponds to the surface $\{r=1\}$. In these coordinates, $Y:=B|_{r=1}$ takes the form
\[
Y=\pd_\al-\tau(\al)\pd_\theta+\cO(\ep)\,,
\]
where $\tau$ is the torsion of $\ga$, and $\cO(\ep)$ stands for a quantity whose $C^m$ norm is bounded by $C|\ep|$, for any fixed integer $m$. Using the expression of the metric on $\pd\Om_1$ induced from $\RR^3$, in coordinates $(\al,\theta)$, we can readily compute $|Y|^2$, which leads to
\begin{equation}\label{eq.Y}
|Y|^2=1+\cO(\ep)\,.
\end{equation}
Moreover, we can compute the linearizing coordinates $(\vp_1,\vp_2)$ in terms of $(\al,\theta)$ as
\[
\vp_1=\al+\cO(\ep)\,, \qquad \vp_2=\theta+\int_{0}^{\al}\tau(s)ds-[\tau]_{\TT^1}\al +\cO(\ep)\,,
\]
and hence $Y$ takes the following form in these coordinates:
\[
Y=\pd_{\vp_1}-[\tau]_{\TT^1}\pd_{\vp_2}+\cO(\ep)\,.
\]
This implies that the frequency vector of $Y$ (which is Diophantine) is given by
\[
\om_1=1+O(\ep)\,, \qquad \om_2=-[\tau]_{\TT^1}+O(\ep)\,.
\]
Accordingly, we infer from Equation~\eqref{eq.Y} that the function $\cR$ in Definition~\ref{D:torus} solves the cohomological equation
\[
(1+O(\ep))\pd_{\vp_1}\cR+(-[\tau]_{\TT^1}+O(\ep))\pd_{\vp_2}\cR=(1+O(\ep))-|Y|^2=\cO(\ep)\,.
\]
It then follows that $\cR=\cO(\ep)$.

Putting together all these computations, we obtain a matrix $M$, cf.~Equation~\eqref{eq.gen}, of the form

\begin{equation*}
M=\int_{\TT^2}G^{-1}_\ep\cdot\left(
    \begin{array}{cc}
      1+\cO(\ep) & \cO(\ep) \\
      \cO(\ep) & 1+\cO(\ep) \\
    \end{array}
  \right)d\vp\,,
\end{equation*}
where $G_\ep$ is the matrix of the metric (which depends on $\ep$) in
$\vp$-coordinates. It is obvious that $M$ is invertible because the
(inverse) metric matrix $G^{-1}_\ep$ is positive definite, which
shows that $\Om_1$ is nondegenerate of type I. In
fact, \cite[Theorem 7.8]{Acta} ensures that the
twist constant $T$ of the invariant torus $\pd \Om_1$ of $B_1$ (see Definition~\ref{def:ndeg}) is
\begin{equation}\label{twistfinal}
T=c_\ga \ep^2+O(\ep^3)\,,
\end{equation}
where $c_\ga$ is certain explicit constant that depends on the
curve~$\ga$ through its curvature and torsion but not on~$\ep$ nor $\la$. This
constant is nonzero for a generic curve~$\ga$~\cite[Lemma 7.9]{Acta}.

To conclude, let us establish property~(ii) in
Definition~\ref{D:torusII}. Straightforward computations using the
formulas for thin tubes derived in~\cite{Acta} show that
\[
  |\al\cdot
  \om^\perp |\leq C\,,\qquad |n|\geq C\ep\,,
\]
for some $\ep$-independent constant $C$. Here $\al$ is the $\RR^2$-valued function introduced in Definition~\ref{D:torusII}, and not an angular coordinate. One can then use~\eqref{twistfinal} to see that
\[
T+\la\,\bigg[\frac{\al\cdot \om^\perp}{|n|^2}\bigg]_{\TT^2}= c_\ga
\ep^2+ O\bigg(\frac{\la}{\ep^2}\bigg)+O(\ep^3)\,.
\]
Therefore,
\[
T+\la\,\bigg[\frac{\al\cdot \om^\perp}{|n|^2}\bigg]_{\TT^2}\neq0
\]
provided that the nonzero constant~$\la$ satisfies $|\la|<
C|c_\ga|\ep^4$ for a certain positive constant~$C$ independent of~$\ep$.
\end{proof}
\begin{remark}
The nondegenerate Beltrami field $B$ in the proof of Proposition~\ref{P:isot} has eigenvalue $\la_1=O(\ep^3)$. In particular, the ratio between the $L^2$ norms of the plasma current $\|\curl B\|_{L^2}$ and the magnetic field $\|B\|_{L^2}$, in the toroidal domain $\Om(\ga,\ep)$, is small. Since the Beltrami equation is linear, we can safely normalize the magnetic field so that $\|B\|_{L^2}=1$, thus implying that the plasma current is small, which is a desirable feature for stellarator design~\cite{Bo}.
\end{remark}

As any toroidal domain is isotopic to a thin tube, we infer that there
are MHD equilibria of the kind described in Theorems~\ref{T:main}
and~\ref{T:main2} that have arbitrary topology:

\begin{corollary}\label{C.main}
There exist piecewise smooth MHD equilibria with fixed or free
toroidal boundaries of arbitrary topology. More precisely, given any
toroidal domain $\Om_0\subset\RR^3$, one can assume that the domains
$\Om_k$ of Theorem~\ref{T:main} and $\Om,\Om'$ of
Theorem~\ref{T:main2} are diffeomorphic to~$\Om_0$.
\end{corollary}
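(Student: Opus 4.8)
The plan is to obtain the corollary as an immediate consequence of Proposition~\ref{P:isot}, Theorem~\ref{T:main} and Theorem~\ref{T:main2}, once an arbitrary toroidal domain has been reduced, up to ambient isotopy, to a thin tube. So, given a toroidal domain $\Om_0\subset\RR^3$, I would first fix a smooth core curve $\ga_0:\TT\to\RR^3$ of~$\Om_0$, that is, a simple closed curve such that $\Om_0$ is ambient isotopic to the tube $\Om(\ga_0,\ep)$ defined in~\eqref{Tgaep} for all sufficiently small~$\ep$; such a curve exists and carries the full ambient-isotopy information of~$\Om_0$, since an embedded solid torus in~$\RR^3$ is determined up to ambient isotopy by the knot type of its core.

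The next step is to replace $\ga_0$ by a curve to which Proposition~\ref{P:isot} applies. That proposition provides a set~$\cG$ of curves, dense in the $C^r$~topology for every $r\geq3$, such that for every $\ga\in\cG$ the tube $\Om(\ga,\ep)$ is analytic and nondegenerate of type~I and~II for almost all small enough~$\ep>0$. Since $\cG$ is $C^r$-dense, I may pick $\ga\in\cG$ arbitrarily $C^r$-close to~$\ga_0$; choosing $r$ large and the approximation close enough, $\ga$ and $\ga_0$ are ambient isotopic (the standard consequence of the isotopy extension theorem for $C^1$-close embeddings), hence the tubes $\Om(\ga,\ep)$ and $\Om(\ga_0,\ep)$ are ambient isotopic for all small~$\ep$, and therefore so are $\Om(\ga,\ep)$ and~$\Om_0$. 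Fixing one such~$\ep$ outside the exceptional measure-zero set, the domain $\Om(\ga,\ep)$ is analytic, nondegenerate of both types, and diffeomorphic to~$\Om_0$.

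It then remains only to invoke the two main theorems with this domain as building block. Applying Theorem~\ref{T:main} with $\Om_1:=\Om(\ga,\ep)$ and $B_1$ a nondegenerate Beltrami field of type~I on it produces, for any $N>1$ and almost every $(\la_2,\dots,\la_N)$, the nested toroidal domains $\{\Om_k\}_{k=2}^N$, which the statement of Theorem~\ref{T:main} asserts to be diffeomorphic to~$\Om_1$; since $\Om_1$ is diffeomorphic (indeed ambient isotopic) to~$\Om_0$, so is every~$\Om_k$, and the accompanying piecewise smooth MHD equilibrium lives on the domain~$\Om_N$ of the prescribed topology. Likewise, applying Theorem~\ref{T:main2} with $\Om:=\Om(\ga,\ep)$ and a nondegenerate Beltrami field of type~II yields the external field, the current sheet $J\ext=J\,dS$ and the enclosing domain~$\Om'$, which Theorem~\ref{T:main2} asserts to be diffeomorphic to~$\Om$, hence to~$\Om_0$. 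This gives the corollary.

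The only point that is not a verbatim citation is the topological reduction carried out in the first two paragraphs, and this is where I would be most careful; however it involves nothing beyond standard differential topology---the classification of embedded solid tori by the knot type of the core, the ambient isotopy of $C^1$-close curves and of their thin tubular neighborhoods, and the $C^r$-density of~$\cG$ already contained in Proposition~\ref{P:isot}---so I do not expect any genuine obstacle. Everything else is bookkeeping of which objects Proposition~\ref{P:isot}, Theorem~\ref{T:main} and Theorem~\ref{T:main2} produce.
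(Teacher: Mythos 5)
Your proposal is correct and follows exactly the paper's route: the published proof is a one-line appeal to Proposition~\ref{P:isot} together with Theorems~\ref{T:main} and~\ref{T:main2}, and your first two paragraphs merely make explicit the standard topological reduction (any toroidal domain is isotopic to a thin tube around a generic core curve) that the paper treats as immediate. No gaps.
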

\begin{proof}
It is an immediate consequence of Theorems~\ref{T:main}
and~\ref{T:main2} and of the fact that, by Proposition~\ref{P:isot}, a generic thin tube is nondegenerate
of type I and II.
\end{proof}

\section{Lipschitz continuous force-free fields with nonconstant factor}\label{S:ff}

A {\em force-free field}\/ in a domain $\Om$ is a vector field $B$ that satisfies the equations
\[
\curl B=f\,B\quad\text{in }\Om\,, \qquad \Div B=0\quad\text{in }\Om\,,
\qquad  B\cdot N=0\quad\text{on }\pd\Om
\]
for some scalar function~$f$. In the context of hydrodynamics, these fields are called Beltrami fields with nonconstant factor~\cite{MYZ,ARMA}.

It is obvious that a force-free field satisfies the MHD equations in
$\Om$ with $P=0$. This is used to define force-free fields of low
regularity. Specifically, one says
that a vector field
$B\in L^2(\Om)$ is {\em force-free}\/ if
\[
\int_{\Om} \left[(B\otimes B)\cdot \nabla w- \frac12|B|^2\Div w\right]\, dx=0\quad
\text{and}\quad \int_{\Om}B\cdot \nabla\phi\,dx=0
\]
for any vector field $w\in C^1_c(\Om)$ and any scalar function
$\phi\in C^1(\Om)$.

The strategy of the proof of Theorem~\ref{T:main} can be readily
adapted to show the existence of Lipschitz-continuous
force-free fields with nonconstant factor on toroidal domains of any
topology. More precisely, one can prove the following:

\begin{theorem}\label{T:ff}
  Let $B_1$ be a nondegenerate Beltrami field of type I with eigenvalue~$\la_1$
  on an analytic toroidal domain~$\Om_1$. For any $N\geq2$ and almost
  all distinct constants $\{\la_k\}_{k=2}^N$, there
  exists a family of nested analytic toroidal domains
  $\{\Om_k\}_{k=1}^N$ as in Theorem~\ref{T:main} and a Lipschitz
  continuous vector field~$B$ satisfying the equation
\[
\curl B=f\,B\,,\qquad \Div B=0
\]
in~$\Om$, where the factor
\[
f:= \sum_{k=1}^N \la_k\,{1}_{\Om_k\backslash
  \overline{\Om_{k-1}}}
\]
is not constant in~$\Om_N$.
\end{theorem}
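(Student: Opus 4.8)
The plan is to rerun the construction behind Theorem~\ref{T:main} with every pressure jump set to zero, $c_k=0$. This has two effects: it removes the Hamilton--Jacobi step (Theorem~\ref{L:cohom} is never invoked, since we no longer need to prescribe the size of $|B_k|^2$ on the interfaces), and it makes the glued field \emph{continuous} rather than merely piecewise smooth, because the Cauchy datum for the next layer is taken to be exactly the boundary trace of the previous Beltrami field, not a deformation of it.

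Concretely, set $B_1':=B_1$ and suppose inductively that we have an analytic toroidal domain $\Om_{k-1}$ and an analytic Beltrami field $B_{k-1}'$ with $\curl B_{k-1}'=\la_{k-1}B_{k-1}'$ on a neighbourhood of $\pd\Om_{k-1}$, for which $\pd\Om_{k-1}$ is a Diophantine invariant torus. Put $X_k:=B_{k-1}'|_{\pd\Om_{k-1}}$. By Theorem~\ref{T.analytic} this trace is analytic; since $B_{k-1}'$ is a Beltrami field tangent to $\pd\Om_{k-1}$, the dual $1$-form $X_k^\flat$ is closed on $\pd\Om_{k-1}$ (exactly as in Section~\ref{S.freebound}); and $X_k$ is analytically conjugate to the Diophantine rotation with frequency $\om^{(k-1)}$. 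Hence Theorem~\ref{T:CK} produces, for every $\la_k\neq\la_{k-1}$, a unique analytic solution $B_k'$ of $\curl B_k'=\la_k B_k'$ with $B_k'|_{\pd\Om_{k-1}}=X_k$ in a neighbourhood of $\pd\Om_{k-1}$; by construction $B_k'$ and $B_{k-1}'$ have the same boundary trace on $\pd\Om_{k-1}$.

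Next one checks that $\pd\Om_{k-1}$ is a twist invariant torus of $B_k'$ for all but one value of $\la_k$. This is the computation of Subsection~\ref{S.pt}, only simpler: since $B_{k-1}'$ and $B_k'$ share the invariant torus $\pd\Om_{k-1}$ \emph{and} the restriction $X_k$ there, they share the linearizing embedding and the frequency vector on $\pd\Om_{k-1}$, so the twist constant is, as in~\eqref{T2}, an affine function $T(\la_k)=c_1-\la_k c_2$ of $\la_k$ whose slope $c_2=|\om^{(k-1)}|\,[F/|n|^2]_{\TT^2}$ is nonzero because $F$ is nowhere vanishing (the vector $\al$ there can never be proportional to $\om^{(k-1)}$). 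Here $c_1,c_2$ depend only on the embedding of $\pd\Om_{k-1}$ and on $X_k$, hence on $\la_2,\dots,\la_{k-1}$, so $T(\la_k)\neq0$ for all $\la_k$ outside a single value. By Corollary~\ref{Cor_KAM}, $\pd\Om_{k-1}$ is then accumulated, from the component of its complement lying outside $\overline{\Om_{k-1}}$, by analytic Diophantine invariant tori of $B_k'$; choosing one close enough to $\pd\Om_{k-1}$ gives a nested analytic toroidal domain $\Om_k\supset\overline{\Om_{k-1}}$ on which $B_k'$ is analytic up to the boundary and for which $\pd\Om_k$ is a Diophantine invariant torus of $B_k'$ (as in Subsection~\ref{S.pt}, by keeping the successive tori sufficiently close one moreover ensures that each $\Om_k$ is nondegenerate of type I, so the $\Om_j$ are literally as in Theorem~\ref{T:main}; only the invariant-torus property is used for the rest of the argument). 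Iterating $N-1$ times and discarding the measure-zero set of tuples $(\la_2,\dots,\la_N)$ on which the $\la_k$ fail to be distinct or one of the twist conditions fails, we obtain $\{\Om_k\}_{k=1}^N$ and analytic Beltrami fields $B_k'$ with $\curl B_k'=\la_k B_k'$ on $\Om_k':=\Om_k\backslash\overline{\Om_{k-1}}$ and $B_k'|_{\pd\Om_{k-1}}=B_{k-1}'|_{\pd\Om_{k-1}}$ for $2\leq k\leq N$.

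Finally, put $B:=B_1\,1_{\Om_1}+\sum_{k=2}^N B_k'\,1_{\Om_k'}$. Each piece is analytic up to the relevant boundary, and on every interface $\pd\Om_k$ the tangential traces (both equal to $X_{k+1}$) and the normal components (both zero) of the two adjacent pieces agree, so $B$ is continuous and piecewise analytic, hence Lipschitz on $\Om_N$; it is divergence-free and tangent to $\pd\Om_N$. Applying Lemma~\ref{L.Euler} with all $c_k=0$ (hence $P\equiv0$) shows $B$ is a weak solution of the stationary MHD equations with zero pressure, i.e.\ a weak force-free field in the sense defined above; and since the traces match, the current-sheet terms of Remark~\ref{R:current} vanish, so the Lipschitz field $B$ in fact satisfies $\curl B=\la_k B$ pointwise almost everywhere on each $\Om_k'$, i.e.\ $\curl B=fB$ with $f$ as in the statement. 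As each $B_k'$ is a nonzero analytic field (its zero set has measure zero) and the $\la_k$ are distinct, $f$ is genuinely nonconstant on $\Om_N$. The one point to get right — and essentially the only place where this argument deviates from, rather than merely simplifies, the proof of Theorem~\ref{T:main} — is the continuity of the glued field across the interfaces, which is exactly what dictates that the Cauchy datum be the incoming boundary trace and thereby makes the Hamilton--Jacobi equation superfluous; the twist computation and the KAM accumulation step are those of Section~\ref{S:Tmain} verbatim, so I do not anticipate any further obstacle.
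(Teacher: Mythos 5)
Your proposal is correct and follows essentially the same route as the paper: the authors also prove Theorem~\ref{T:ff} by rerunning the construction of Theorem~\ref{T:main} with all $c_k=0$ (hence $b_k=0$), so that the Cauchy datum for each layer is the exact boundary trace of the previous Beltrami field, Theorem~\ref{L:cohom} is never needed, the glued field is continuous and the current-sheet terms of Remark~\ref{R:current} vanish. Your closing observation that only the Diophantine-invariant-torus property (and not the invertibility of the matrix $M$) is actually used is also made explicitly in the paper, in the remark following the proof.
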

\begin{proof}
We use the same construction as in the proof of Theorem~\ref{T:main},
but we consider the particular case where $c_k:=0$ for all
$k=2,\cdots,N$. This obviously implies that $b_k=0$ as well. The
effect of this choice of constants is that the vector field $B_k$ in a
neighborhood of $\pd\Om_{k-1}$ is constructed using the
Cauchy--Kovalevskaya theorem with Cauchy datum given by
$B_{k-1}|_{\pd\Om_{k-1}}$, so $\pd\Om_{k-1}$ is no longer a
discontinuity surface of the magnetic field, and that we do not need to use Theorem~\ref{L:cohom}.

For almost all choices of the constant $\la_k$, the Diophantine
invariant torus $\pd\Om_{k-1}$ one obtains is twist. This ensures the
existence of a family of nested toroidal domains $\{\Om_k\}_{k=1}^N$
as above and of a weak solution $(B,P)$ to the MHD equations with
constant pressure $P=0$. In each set
$\Om_k\backslash\overline{\Om_{k-1}}$, $B$ satisfies the equation
$\curl B=\la_kB$ and is analytic up to the boundary; in particular,
$B$ is Lipschitz continuous on~$\Om_N$. The plasma current $\curl B$
is given by Remark~\ref{R:current}, and the singular terms supported
on the toroidal surfaces vanish because~$B$ is continuous. This
formula shows that $B$ is in fact a force-free field with piecewise
constant factor~$f$
as above. The theorem is then proven.
\end{proof}
 \begin{remark}
 The assumption that the matrix $M$ is invertible that appears in the
 definition of nondegenerate toroidal domains of type I is not used to prove
 Theorem~\ref{T:ff} because here we do not need
 Theorem~\ref{L:cohom}. Thus, the theorem holds under the slightly weaker
 assumption on the domain~$\Om_1$ that it admits a Beltrami field for which the boundary is a Diophantine invariant torus.
 \end{remark}

\begin{remark}
In view of Proposition~\ref{P:isot}, Theorem~\ref{T:ff} implies the existence of Lipschitz-continuous
force-free fields with nonconstant factor on thin tubes of any topology.
\end{remark}

\section*{Acknowledgements}

The authors are grateful to Theo Drivas for pointing out the formulation and relevance of the free boundary problem in the context of MHD equilibria. We also thank the reviewers for their careful reading of the manuscript and useful suggestions.
This work has received funding from the European Research Council (ERC) under the European Union's Horizon 2020 research and innovation programme through the grant agreement~862342 (A.E.). It is partially supported by the Swedish Research Council VR grant~2019-04591 (A.L.) and the grants CEX2019-000904-S, RED2018-102650-T and PID2019-106715GB GB-C21 (D.P.-S.) funded by MCIN/AEI/10.13039/501100011033.

\appendix

\section{Analyticity up to the boundary of Beltrami fields}
\label{A.analytic}

In this appendix we show that, if $\Om$ is a bounded,
analytic domain of~$\RR^3$ and $B$ is a Beltrami field on~$\Om$ that
is tangent to the boundary of the domain, then $B$ is analytic up to
the boundary. More generally, we shall prove the following:

\begin{theorem}\label{T.analytic}
  Let $B\in H^1(\Om)$ be a divergence-free field satisfying
  \[
\curl B-\la B= F\;\text{in }\Om\,,\qquad B\cdot N=0\;\text{on }\pd\Om
\]
for some constant~$\la$. If $F\in C^\om(\BOm)$, then $B$ is analytic up to the boundary of the domain.
\end{theorem}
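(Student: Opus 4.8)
The plan is to turn the div--curl system for~$B$ into a genuinely elliptic second-order boundary value problem with analytic data, and then to invoke the classical Morrey--Nirenberg theory of analyticity up to the boundary for elliptic systems.

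First I would produce a second-order equation for~$B$. Applying the curl to the identity $\curl B=\la B+F$ and using $\curl\curl B=\grad\Div B-\Delta B=-\Delta B$ (since $\Div B=0$), one obtains
\[
\Delta B+\la^2 B=\tilde F\quad\text{in }\Om\,,\qquad \tilde F:=-\bigl(\la F+\curl F\bigr)\,,
\]
and $\tilde F\in C^\om(\BOm)$ because $F$ is. For the boundary conditions, besides the given $B\cdot N=0$, the equation $\curl B=\la B+F$ forces the tangential trace $N\times\curl B=\la\,N\times B+N\times F$ on~$\pd\Om$, which I rewrite as the first-order condition $N\times\curl B-\la\,N\times B=N\times F$. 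Thus $B$ is a solution of the $3\times3$ system $\Delta B+\la^2 B=\tilde F$ in~$\Om$ subject to the three scalar boundary conditions
\[
B\cdot N=0\,,\qquad N\times\curl B-\la\,N\times B=N\times F\qquad\text{on }\pd\Om\,.
\]
The coefficients of these boundary operators are analytic, since $\pd\Om$ and hence the unit normal~$N$ are analytic, and the data $\tilde F$ and $N\times F|_{\pd\Om}$ are analytic as well.

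The main step is to verify that this boundary value problem is elliptic in the Agmon--Douglis--Nirenberg sense, i.e., that the Lopatinski--Shapiro (complementing) condition holds at every boundary point; this is a finite computation. Flattening the boundary near a point so that locally $N=e_3$ and $\Om=\{x_3>0\}$, the principal part of the interior operator is the scalar Laplacian acting on each component, and for a fixed real cotangent vector $\xi'\in\RR^2\backslash\{0\}$ the bounded solutions of the corresponding ODE along the inner normal are $v\,e^{-|\xi'|x_3}$ with $v\in\CC^3$. On such a solution the principal symbols of the boundary operators are $v_3$ (coming from $B\cdot N$, the term $\la\,N\times B$ being of lower order) and $N\times(i\zeta\times v)$ (coming from $N\times\curl B$), with $\zeta:=(\xi_1,\xi_2,i|\xi'|)$ encoding the substitution $\pd_{x_3}\mapsto-|\xi'|$. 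Imposing $v_3=0$ first and then $N\times(i\zeta\times v)=i\bigl[\zeta v_3-(e_3\cdot\zeta)v\bigr]=|\xi'|\,v=0$ forces $v_1=v_2=0$, hence $v=0$. Thus the only such solution annihilated by the boundary operators is trivial, the complementing condition holds, and the problem is elliptic; the case $\la=0$ is included with no change.

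Once ellipticity is established the rest is routine. Since $\Om$ is analytic, the coefficients of $\Delta+\la^2$ and of the boundary operators are analytic and the data $\tilde F$, $N\times F$ are analytic, so the classical theorem on analyticity up to the boundary for elliptic boundary value problems (Morrey, Nirenberg) yields $B\in C^\om(\BOm)$. Concretely, one first bootstraps the given $H^1$ regularity to $C^\infty(\BOm)$ using the Agmon--Douglis--Nirenberg a priori estimates (the boundary traces make sense at every step because $\curl B=\la B+F\in H^1$), and then upgrades $C^\infty$ to $C^\om$. I expect the verification of the complementing condition to be the only real obstacle; everything else is a standard appeal to elliptic regularity theory.
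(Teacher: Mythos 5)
Your argument is correct, but it takes a genuinely different route from the paper's. The paper stays at first order: it encodes $B$ as a $1$-form $\psi$, couples it to the Dirac-type operator $d+d^*$ on the full exterior algebra $\La^0\oplus\cdots\oplus\La^3$, imposes the boundary condition through an orthogonal projector $P''$ (which for a form of pure degree one reduces to $i_N\psi=0$, i.e.\ $B\cdot N=0$), verifies ellipticity of the resulting boundary value problem via the symbol of the Calder\'on projector (the B\"ar--Ballmann criterion), and gets analyticity up to the boundary from Boutet de Monvel's theory. You instead square the operator, obtaining $\Delta B+\la^2B=\tilde F$, manufacture the two missing boundary conditions by restricting $\curl B=\la B+F$ to $\pd\Om$ and taking the vector product with $N$, and check the Lopatinski--Shapiro condition by hand before invoking Morrey--Nirenberg. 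Your symbol computation is right: on the decaying solutions $v\,e^{-|\xi'|x_3}$ the boundary symbols are $v_3$ and $i\zeta v_3+|\xi'|v$ with $\zeta=(\xi_1,\xi_2,i|\xi'|)$, and these jointly annihilate only $v=0$; the count of boundary conditions ($1+2=3$ for a $3\times3$ second-order system) is also correct, and passing to the second-order problem loses no information since you only need an a priori regularity statement for the particular solution $B$, not well-posedness. The trade-off is clear: the paper's formulation avoids having to extract extra boundary conditions from the equation and fits into a general framework for Dirac operators, while yours rests solely on the classical Agmon--Douglis--Nirenberg and Morrey--Nirenberg machinery and makes the complementing condition completely explicit. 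The one point deserving an extra sentence in your write-up is the initial bootstrap: the ADN estimates are a priori estimates for solutions already known to be in $W^{2,p}$, so the step from $H^1$ to $H^2$ should be justified (e.g.\ by difference quotients for the localized, flattened problem, using that $\curl B=\la B+F\in H^1$ gives the boundary traces meaning); after that the induction to $C^\infty(\BOm)$ and the upgrade to $C^\om(\BOm)$ are standard, as you say.
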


The motivation for this result is that solutions to elliptic boundary
value problems on real analytic domains and with analytic coefficients
are analytic up to the boundary. While curl is not
an elliptic operator, intuitively speaking it is not far from being one provided
that we restrict our attention to divergence-free fields.

Perhaps the most efficient way to make this intuition precise, thereby
proving
Theorem~\ref{T.analytic}, is via a
Dirac-type operator acting on differential forms of mixed degrees. Specifically, let
\[
  \Lab:=\La^0\oplus \La^1 \oplus \La^2\oplus \La^3
\]
be the exterior algebra of~$\RR^3$, where $\La^p$ is the
$\binom3p$-dimensional vector
space of $p$-forms. $\Lab$ is then a real vector space of dimension~8.

Given a differential form $\psi \in H^1(\Om,\Lab)$, we now define a
Dirac-type operator as
\[
\cD\psi:= d\psi+ d^*\psi\,,
\]
where $d$ and $d^*$ respectively denote the exterior derivative and
the codifferential. $\cD$ is an elliptic first order differential operator
satisfying $\cD^2=-\De$ (minus the Hodge Laplacian). The principal symbol of~$\cD$ is the map
$\si_1(\cD): \RR^3\to \cL(\Lab)$ given by
$\si_1(\cD)(\xi)=i\ga(\xi)$, where
\[
\ga(\xi)\Psi:=\xi\wedge \Psi +\star(\xi\wedge\star \Psi)\,,
\]
the wedge denotes the exterior product and $\star$ is the Hodge star operator. Note that $\ga(\xi)^2 =
-|\xi|^2I$.

\begin{proof}[Proof of Theorem~\ref{T.analytic}]
  For each point $x\in\pd\Om$ and for $p=1,2$, let
\begin{align*}
  \Latp&:=\{\Psi \in \La^p: i_{N(x)}\Psi=0\}\,,\\
         \La^1_{x,\mathrm{nor}}&:=\{\Psi \in \La^1: i_\xi\Psi=0\text{ for all }
                \xi\in T_x\pd\Om\}\,,\\
                \La^2_{x,\mathrm{nor}}&:=\{\Psi \in \La^2: i_\xi\Psi=0\text{ for some }
                \xi\in T_x\pd\Om\}\,,
\end{align*}
denote the vector spaces (of dimension $3-p$ and $p$, respectively) of $p$-forms that are tangent and normal to the boundary
at the point~$x$. Here and in what follows,
\[
T_x\pd\Om:=\{\xi\in \RR^3 \text{ with } \xi\cdot N(x)=0\}
\]
is the tangent space of the boundary at the point~$x$ and
$i_\xi\Psi$ denotes the interior product of the
vector~$\xi$ with the form~$\Psi$. For each $x\in \pd\Om$, we now define the
4-dimensional vector subspaces
\[
E_x':=\LatI\oplus \LanII\,,\qquad E_x'':= \La^0\oplus \LanI\oplus \LatII\oplus\La^3\,.
\]
Obviously $\Lab=E_x'\oplus E_x''$ for any $x\in\pd\Om$ and these
subspaces are orthogonal. Let us denote the orthogonal projector
$\Lab\to E_x''$ by $P_x''$ and introduce the trace-type operator
\[
(P''\psi)(x):=P_x''\psi(x)\in H^{1/2}(\pd\Om,\Lab)\,.
\]
Therefore, $P''$ maps each differential form $\psi\in H^1(\Om,\Lab)$ to the
projection of its boundary trace to the space of $H^{1/2}$ sections of
the vector bundle $E''$ over the boundary whose section at~$x$ is
$E_x''$. Equivalently, one can understand $P''$ as the action of a
zeroth order pseudodifferential operator (whose principal symbol is an orthogonal
projector) on the usual boundary trace $\psi|_{\pd\Om}\in H^{1/2}(\pd\Om,\Lab)$.

We are interested in the boundary value problem for the Dirac operator
defined as
\begin{subequations}\label{BVPDirac}
\begin{align}
  \cD\psi -\la \star\psi = G\quad &\text{ in }\Om\,,\label{eqD}\\
  \qquad P''\psi=0 \quad &\text{ on }\Om\,.\label{BCD}
\end{align}
\end{subequations}
The key observation now is that, for each $x\in \pd\Om$ and each
$\xi\in T^*_x\pd\Om$, the operator
\[
a(x,\xi):= \ga(N^\flat(x))\ga(\xi)\in \cL(\Lab)
\]
maps $E_x'$ onto $E_x''$ and viceversa. It is then
well known~\cite[Corollary 3.18]{Bar} that~\eqref{BVPDirac}
defines an elliptic boundary value problem. For the benefit of the
reader, let us mention that this is a straightforward consequence of
the fact that the principal symbol of the Calder\'on projector
associated to the boundary problem~\eqref{BVPDirac} is $\left[
  |\xi|I-ia(x,\xi)\right]/(2|\xi|)$. Since~\eqref{BVPDirac} is an
elliptic boundary value problem, the theory of Boutet de
Monvel~\cite{BdM} ensures that $\psi$ is analytic up to the boundary
if $G\in C^\om(\BOm,\Lab)$.

This implies that the vector field~$B$ is analytic up to the
boundary. In order to see this, let $\psi$ be the differential 1-form
dual to~$B$, which we can regard as an element of $H^1(\Om,\Lab)$. If
$G\in C^\om(\BOm,\Lab)$ denotes the 1-form dual to~$F$, the equations
$\Div B=0$ and $\curl B-\la B= F$ read in terms of the associated
forms as
\[
d^*\psi=0\,,\qquad \star d\psi-\la \psi=G\,.
\]
As $\star\,\star $ is the identity, we infer that~$\psi$ satisfies
Equation~\eqref{eqD} in~$\Om$. Furthermore, the condition $B\cdot N=0$
amounts to saying that the differential form $\psi$ (which is of pure
degree~1) satisfies $i_N\psi=0$. This ensures that~$\psi$ satisfies
the boundary condition~\eqref{BCD}. The analytic regularity result for
solutions to the elliptic problem~\eqref{BVPDirac} then implies that
$B\in C^\om(\BOm)$, as claimed.
\end{proof}

\section{Beltrami fields, Cauchy--Kovalevskaya and thin tubes}\label{Ap1}

In this appendix we review some previous results about Beltrami
fields. The first result is an analog of the Cauchy--Kovalevskaya
theorem for the curl operator which was proved in~\cite{Annals} and
which plays an important role in the proof of our main theorem. The
basic idea is that, even though the curl operator does not admit
noncharacteristic surfaces, a variant of the
Cauchy--Kovalevskaya theorem holds provided that the Cauchy datum
satisfies a certain necessary constraint. In the statement, the reader
should note that the second equation, $\Div B=0$, is a consequence of
the first when $\la\neq0$, so in that case it can be omitted.

\begin{theorem}\label{T:CK}
Let $\Sigma$ be an embedded and oriented analytic surface of
$\RR^3$. For any real constant~$\la$, the Cauchy problem
\begin{equation}
\curl B=\la B\,, \qquad \Div B=0\,,\qquad B|_\Sigma=X\,,
\end{equation}
with $X$ an analytic field tangent to $\Sigma$, has a unique solution in a neighborhood of~$\Sigma$ if and only if
\begin{equation}\label{Eq.CKclosed}
d (j_\Sigma^*X^\flat)=0\,,
\end{equation}
where $X^\flat$ is the metric dual $1$-form of $X$, and $j_\Sigma:\Sigma\to \RR^3$ is the embedding of $\Sigma$ in $\RR^3$.
\end{theorem}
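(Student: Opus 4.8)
The plan is to dualize everything: write $\beta:=B^\flat$, so that $\curl B=\la B$ becomes $\star d\beta=\la\beta$, i.e.\ $d\beta=\la\star\beta$ (using $\star^2=\id$ on $1$- and $2$-forms in $\RR^3$), while $\Div B=0$ becomes $d\star\beta=0$. Since $d(i_B\operatorname{vol})=\mathcal{L}_B\operatorname{vol}=(\Div B)\operatorname{vol}$, applying $d$ to $d\beta=\la\star\beta$ shows that $\Div B=0$ is automatic when $\la\neq0$ and must be imposed only when $\la=0$; in every case a solution satisfies both $d\beta=\la\star\beta$ and $d\star\beta=0$. For the \emph{necessity} of~\eqref{Eq.CKclosed} I would simply pull $d\beta=\la\star\beta$ back by $j_\Sigma$: the left-hand side is $d(j_\Sigma^*\beta)=d(j_\Sigma^*X^\flat)$, while $j_\Sigma^*(\la\star\beta)=\la\,j_\Sigma^*(i_B\operatorname{vol})=\la\,(B\cdot N)\,dS=0$ because $X$ is tangent to $\Sigma$. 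Hence $d(j_\Sigma^*X^\flat)=0$.

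For \emph{sufficiency}, the idea is to recast this (overdetermined-looking) system in Cauchy--Kovalevskaya normal form along $\Sigma$, despite $\Sigma$ being characteristic for $\curl$. I would work locally in a Fermi normal coordinate chart $(u^1,u^2,t)$ around a point of $\Sigma$, so that $\Sigma=\{t=0\}$, $\partial_t$ is the Euclidean unit normal to every slice $\{t=\mathrm{const}\}$, and the metric reads $dt^2+g_{ij}(u,t)\,du^i\,du^j$. Writing $\beta=B_1\,du^1+B_2\,du^2+B_t\,dt$, the equation $d\beta=\la\star\beta$ decomposes along $\{du^1\wedge du^2,\,du^1\wedge dt,\,du^2\wedge dt\}$: its two mixed components can be solved for $\partial_t B_1$ and $\partial_t B_2$, whereas the $du^1\wedge du^2$ component, $\Theta_{12}:=\partial_{u^1}B_2-\partial_{u^2}B_1-\la\sqrt{g}\,B_t$, contains no $t$-derivative and is a bare constraint; adjoining $\Div B=0$, which solves for $\partial_t B_t$, yields a genuine Cauchy--Kovalevskaya evolution system for $(B_1,B_2,B_t)$ with analytic coefficients. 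Since $X$ is tangent to $\Sigma$ one has $B_t|_{t=0}=0$, while $B_i|_{t=0}$ are the covariant components of $X$, all analytic, so the classical Cauchy--Kovalevskaya theorem furnishes a unique analytic solution near $\Sigma$.

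It remains to recover the dropped constraint. Consider the $2$-form $\Theta:=d\beta-\la\star\beta$, whose mixed components vanish by construction, so $\Theta=\Theta_{12}\,du^1\wedge du^2$. The key observation is that $d\Theta=d(d\beta)-\la\,d\star\beta=-\la\,(\Div B)\operatorname{vol}=0$; hence $0=d\Theta=(\partial_t\Theta_{12})\,dt\wedge du^1\wedge du^2$, forcing $\partial_t\Theta_{12}\equiv0$. Since $\Theta_{12}|_{t=0}=\partial_{u^1}X_2-\partial_{u^2}X_1$ is exactly the single component of $d(j_\Sigma^*X^\flat)$, the hypothesis~\eqref{Eq.CKclosed} gives $\Theta_{12}|_{t=0}=0$, whence $\Theta_{12}\equiv0$ and $\curl B=\la B$ holds throughout the chart, with $B$ tangent to $\Sigma$ and $B|_\Sigma=X$. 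Finally I would globalize: cover $\Sigma$ by such charts; any two solutions of the original Cauchy problem both solve the same evolution system with the same data, so by Cauchy--Kovalevskaya uniqueness they coincide on overlaps, and the local pieces glue to a solution on a neighborhood of $\Sigma$, which is also unique. (This is in essence the argument of~\cite{Annals}, which one may alternatively just invoke.)

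The step I expect to require most care is the reduction to normal form: verifying that the two mixed components of $d\beta=\la\star\beta$ really are solvable for $(\partial_t B_1,\partial_t B_2)$ and that, together with $\Div B=0$, the coefficient of $\partial_t$ in these three equations is invertible (in Fermi coordinates it is essentially $\diag(-1,-1,\sqrt{g})$ up to lower-order terms), as well as the bookkeeping distinguishing $\la=0$ from $\la\neq0$ so that $\Div B=0$ --- which is what makes $\Theta$ closed --- holds in both cases. The conceptual heart, by contrast, is just the one-line fact that $\Theta$ is closed, which automatically propagates the single constraint off $\Sigma$.
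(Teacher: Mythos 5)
Your argument is correct; note that the paper itself does not prove Theorem~\ref{T:CK} but merely recalls it from~\cite{Annals}, and your proof — dualizing to $d\beta=\la\star\beta$, $d\star\beta=0$, extracting an evolution system in normal coordinates from the two mixed components plus the divergence constraint, applying Cauchy--Kovalevskaya, and propagating the single tangential constraint via the closedness of $\Theta=d\beta-\la\star\beta$ — is essentially the argument given there. The necessity direction via pulling back $\la\star\beta=\la\,(B\cdot N)\,dS$ is likewise the standard one, so there is nothing to object to.
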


The second result concerns Beltrami fields on thin toroidal domains
$\Om(\ga,\ep)$, as defined in~\eqref{Tgaep}.
The following theorem, which was proved in~\cite{Acta,ELP}, shows that
in a generic thin tube $\Om(\ga,\ep)$ there exist Beltrami fields that
are tangent to the boundary and exhibit some nice ``almost-integrability'' properties.

\begin{theorem}\label{P:tubes}
Let $\ga$ be a generic closed curve, where by ``generic'' we mean an open and dense set, in the $C^3$ topology, of smooth closed curves in
$\RR^3$. For almost every small enough $\ep>0$ and almost every nonzero constant $\la_1=O(\ep^3)$, the boundary of $\Om(\ga,\ep)$
is a Diophantine invariant torus of a Beltrami field $B$, which satisfies
the equation $\curl B=\la_1 B$ in $\Om(\ga,\ep)$. Moreover, the boundary $\pd\Om(\ep,\ga)$ can be
chosen to be analytic for a dense set of curves.
\end{theorem}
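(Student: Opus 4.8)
The plan is to follow~\cite{Acta,ELP}: analyze the harmonic field---and nearby Beltrami fields---on the thin tube by a perturbative argument in the thickness $\ep$, and then exploit the freedom in the parameters $(\ep,\la_1)$ to land on a Diophantine frequency vector; the crux will be controlling that frequency vector precisely enough. \textbf{Step 1: adapted coordinates and the harmonic field.} After scaling $\ga$ so that its length is $2\pi$, introduce Fermi-type coordinates $(\al,r,\te)\in\TT\times(0,1)\times\TT$ on $\Om(\ga,\ep)$, where $\al$ is arclength along $\ga$, $\ep r$ is the distance to $\ga$, and $\te$ is the angle in the normal disc measured against a parallel-transported orthonormal frame; these are well defined on $\Om(\ga,\ep)$ once $\ga$ is embedded (an open dense condition in $C^3$) and $\ep$ is small. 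In these coordinates the Euclidean metric depends on the curvature $\ka(\al)$ and torsion $\tau(\al)$ of $\ga$, and the computations of~\cite{Acta} show that the harmonic field $h$ of $\Om(\ga,\ep)$, normalized by $\int_{\Om(\ga,\ep)}|h|^2\,dx=1$, has an expansion whose leading term is a constant multiple of $\pd_\al-\tau(\al)\,\pd_\te$; in particular $h$ restricted to $\pd\Om(\ga,\ep)=\{r=1\}$ is a nonvanishing vector field on $\TT^2$ which, after the change of angle $\vp_2:=\te+\int_0^\al(\tau(s)-[\tau]_{\TT})\,ds$ and $\vp_1:=\al$, becomes $\pd_{\vp_1}-[\tau]_{\TT}\,\pd_{\vp_2}+O(\ep)$.

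\textbf{Step 2: from the harmonic field to a Beltrami field.} For every $\la_1\in\RR\backslash\spec(\curl)$ there is a unique Beltrami field $B=B(\ep,\la_1)$ on $\Om(\ga,\ep)$ with $\curl B=\la_1B$, $B\cdot N=0$ on the boundary and prescribed harmonic part $\int_{\Om(\ga,\ep)}B\cdot h\,dx=1$; it depends real-analytically on $\la_1$ and reduces to $h$ at $\la_1=0$. Combining this with the quantitative estimates for Beltrami fields on thin tubes from~\cite{Acta,ELP}, one gets, in the parameter range $|\la_1|\leq C\ep^3$, a boundary field $B|_{\pd\Om(\ga,\ep)}=\pd_{\vp_1}-[\tau]_{\TT}\,\pd_{\vp_2}+O(\ep)$ on $\TT^2$, with the $O(\ep)$ term depending smoothly on $(\ep,\la_1)$ (real-analytically when $\ga$ is analytic). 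This field is nonvanishing, so it has a well-defined rotation vector $\om=\om(\ep,\la_1)=(1,-[\tau]_{\TT})+O(\ep)$, and~\cite{Acta} supplies the first nontrivial terms of the expansion of $\om$ (which involve integrals of $\ka$ and $\tau$ along $\ga$).

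\textbf{Step 3: Diophantine property.} This is the heart of the matter. For a generic analytic curve the rotation number $\varrho(\ep):=\om_2(\ep,\la_1)/\om_1(\ep,\la_1)$ is a \emph{nonconstant} real-analytic function of $\ep$, the nonconstancy being guaranteed by the nonvanishing of the explicit $\ep$-coefficients computed in~\cite{Acta}---an open dense condition on $\ga$. A nonconstant real-analytic map pulls back the measure-zero set of non-Diophantine numbers to a set of measure zero, so $\om(\ep,\la_1)$ is Diophantine for almost every small $\ep$ (alternatively one fixes $\ep$ and varies $\la_1$, using that $\partial_{\la_1}\varrho\neq0$). For such parameters $B|_{\pd\Om(\ga,\ep)}$ is a nonvanishing analytic (resp.\ smooth) vector field on $\TT^2$ with Diophantine rotation vector, so Herman's theorem~\cite{Y,Yoccoz} provides analytic (resp.\ smooth) coordinates in which it equals the linear field $\om_1\pd_{\vp_1}+\om_2\pd_{\vp_2}$. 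Hence $\pd\Om(\ga,\ep)$ is a Diophantine invariant torus of $B$, with eigenvalue $\la_1=O(\ep^3)$, as claimed.

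\textbf{Step 4: analyticity, and the main obstacle.} Since all the genericity conditions used above are open and dense in $C^3$, one may take $\ga$ analytic; then for all but countably many $\ep$ the function $\dist(\cdot,\ga(\TT))$ is real-analytic near $\{\dist=\ep\}$ (no focal points on that level set), so $\Om(\ga,\ep)$ is an analytic domain, and Theorem~\ref{T.analytic} (applied with $F=0$) upgrades $B$ to a field analytic up to the boundary---which is what the KAM arguments in the body of the paper need. The main difficulty of the whole scheme is Step~3: one must pin down $\om(\ep,\la_1)$ precisely enough to know that its dependence on the parameters is genuinely nondegenerate rather than accidentally constant, and this is exactly where the detailed asymptotic analysis of Beltrami fields on thin tubes from~\cite{Acta} is indispensable and where the word ``generic'' does real work. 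The perturbative construction of $B$ in Step~2, though technical---one has to set up the right function spaces and check the mapping properties of $\curl-\la_1$ with the tangency condition uniformly in $\ep$---is otherwise routine once the model problem is in hand.
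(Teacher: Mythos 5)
The paper does not actually prove Theorem~\ref{P:tubes}: Appendix~\ref{Ap1} merely records it as a result imported from~\cite{Acta,ELP}, so there is no in-house argument to compare yours against line by line. That said, your Steps~1, 2 and~4 are consistent with the construction in those references and with the asymptotic formulas the authors reproduce in the proof of Proposition~\ref{P:isot}: Fermi-type coordinates, the leading-order behaviour $\pd_\al-\tau(\al)\pd_\te$ of the harmonic field, the Beltrami field with prescribed harmonic part for $\la_1$ outside the (large, $O(1/\ep)$) spectrum of curl, and Herman--Yoccoz linearization once a Diophantine rotation number is in hand.

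The genuine gap is exactly where you locate ``the heart of the matter,'' Step~3, and it is not repaired by your argument. You assert that $\varrho(\ep):=\om_2(\ep,\la_1)/\om_1(\ep,\la_1)$ is a \emph{nonconstant real-analytic} function of $\ep$ (or that $\partial_{\la_1}\varrho\neq0$), and then pull back the null set of non-Diophantine numbers. But $\varrho$ is the rotation number of the return map of $B|_{\pd\Om(\ga,\ep)}$ to the circle $\{\al=0\}$, and the rotation number of an analytic family of circle diffeomorphisms is in general only \emph{continuous} in the parameter: it exhibits mode-locking (local constancy at rational values on parameter intervals of positive length), it need not be differentiable anywhere off the linearizable set, and a priori the mode-locked set can even have positive measure. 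Nonconstancy of the formal $\ep$-expansion computed in~\cite{Acta} is an expansion of the rotation number of the \emph{integrable approximation}; it does not confer analyticity, nor even Luzin's property (N) for the inverse, on the true $\varrho$, so the inference ``nonconstant analytic $\Rightarrow$ preimage of a null set is null'' does not apply. There is no a priori invariant smooth measure for $B|_{\pd\Om}$ that would rule out mode-locking (closedness of $j^*B^\flat$ from Theorem~\ref{T:CK} is not enough), so some genuinely quantitative input is needed. The route taken in~\cite{Acta,ELP} is to pass to the area-preserving Poincar\'e map of $B$ on a transverse disc, exhibit it as a small exact perturbation of an integrable twist map, and run a KAM/Moser twist theorem \emph{with parameters}, which yields Whitney-$C^1$ dependence of the frequency on the parameter together with explicit measure estimates for the excluded set; the ``almost every $\ep$ and almost every $\la_1$'' in the statement is the output of that measure-theoretic bookkeeping, not of naive analyticity of $\varrho$. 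As written, your proposal replaces the hardest estimate of the proof by an assertion that is false for generic one-parameter families.
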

\begin{remark}
In fact, the toroidal domain $\Om(\ga,\ep)$ is covered by a set of Diophantine invariant tori~$\mathcal I$ of almost
full measure in the sense that, for any $\de>0$,
$$\frac{|\mathcal I|}{|\Om(\ga,\ep)|}>1-\de\,,$$
provided that $\ep$ is small enough.
\end{remark}


\bibliographystyle{amsplain}

\end{document}